\documentclass[trsc,nonblindrev]{informs3} 

\OneAndAHalfSpacedXI 

\usepackage{hyperref}
\usepackage{comment}
\usepackage{natbib} 
 \bibpunct[, ]{(}{)}{,}{a}{}{,}%
 \def\bibfont{\small}%
 \usepackage{tabularx}
\usepackage{longtable,booktabs}
\usepackage{appendix}
\usepackage{algorithm}
\usepackage{algorithm,algpseudocode}
\usepackage{booktabs,subcaption} 
\usepackage{booktabs}   
\usepackage{siunitx}    
\usepackage{makecell}
\usepackage{makecell}
\usepackage{multirow}
\usepackage{array}
\newcolumntype{P}[1]{>{\centering\arraybackslash}p{#1}}
\usepackage{longtable}
\usepackage{caption}
\usepackage{rotating}

\TheoremsNumberedThrough     

\EquationsNumberedThrough    

\begin{document}



\RUNTITLE{Risk-Averse Stochastic Assignment on Uncertain Networks}

\TITLE{Risk-Averse Stochastic User Equilibrium on Uncertain Transportation Networks}


 \ARTICLEAUTHORS{%
 \AUTHOR{Wencheng Bao, M.Sc.}
 \AFF{Department of Civil and Environment Engineering, University of Illinois Urbana-Champaign \URL{}}
  \AUTHOR{Chrysafis Vogiatzis, Ph.D.}
  \AFF{Department of Industrial and Enterprise Systems Engineering, University of Illinois Urbana-Champaign \URL{}}
  \AUTHOR{Eleftheria Kontou, Ph.D.*}
  \AFF{Department of Civil and Environment Engineering, University of Illinois Urbana-Champaign, Corresponding author: \EMAIL{kontou@illinois.edu} \URL{}}
  } 
\ABSTRACT{Extreme weather events, like flooding, disrupt urban transportation networks by reducing speeds and capacities, and by closing roadways. These hazards create regime-dependent uncertainty in link performance and travel-time distribution tails, challenging conventional traffic assignment that relies on the expectation of cost or mean excess of cost summation. This study develops a risk- and ambiguity-aware traffic assignment framework coupling stochastic supply driven by hazard impacts, endogenous route choice with choice set truncation, and tail-risk management within a tractable convex truncated stochastic user equilibrium (TSUE) formulation. Travelers' perceived costs use a normalized mean-CVaR certainty equivalent encoding tail sensitivity into two interpretable parameters ($\alpha$ and $\lambda$) while preserving convexity. We propose two complementary treatments. TSUE-Stochastic Programming (TSUE-SP) optimizes a nominal risk-aware TSUE balancing average performance and adverse-tail outcomes. TSUE-Distributionally Robust Optimization (TSUE-DRO) protects against calibration error and distributional misspecification by incorporating robustness over a $1$-Wasserstein ambiguity set, and when appropriate, over structured regime-dependent sets for piecewise-stationary hazards (non-stationary distribution case). Duality yields a scenario-based second-order cone program solved via Benders cuts. On a stylized grid network representing downtown Chicago, western corridor traffic increases $67.9\%$ with TSUE-SP and $100.9\%$ with TSUE-DRO relative to a baseline not impacted by the hazard. The formulations redistribute flows without large-scale rerouting, illustrating how tail weighting and distributional ambiguity fine-tune rather than subvert equilibrium choices in hazard-prone networks.
}%


\KEYWORDS{Traffic Assignment, Stochastic User Equilibrium, Stochastic Programming, Distributionally Robust Optimization, Hazard Impacts, Risk-Aversion}

\maketitle

\section{Introduction}
\label{sec:introduction}
Hazards induced by natural phenomena or anthropogenic activities often disrupt infrastructure systems and their operations, including transportation networks. Flooding affecting major thoroughfares can cause limited and delayed access to employment, educational functions, and health and emergency services \citep{lightbody_2017}. In a modeled 100-year urban flood for 2564 cities in 177 countries across the world, approximately 15\% of road kilometers were inundated. Among trips that remain feasible, the average route becomes about 1.5 km longer and 3 minutes slower relative to baseline conditions \citep{HeRentschlerAvner2022}. 

Weather-related hazards measurably degrade mobility and road safety. \citet{loren_2024} estimates that 24\% of weather-related crashes occur on snowy, slushy, or icy pavement, with another 15\% during active snowfall or sleet. Rain, snow, high winds, fog, and hurricanes all elevate crash risk and reduce travel speeds \citep{6991566}. In Florida, moderate rainfall decreases average vehicle speeds by roughly 10 mph \citep{collins_pietrzyk_2001}. These disruptions are consequential for accessibility, safety, and social well-being, and are expected to intensify as infrastructure ages and extreme events become more frequent \citep{gangani2024adapting}. 


Such disruptions typically result in right-skewed, long-tailed travel time distributions \citep{van_lint_van_zuylen_tu_2008, zang_xu_qu_chen_chen_2022, li_pereira_ben-akiva_2018}. Risk-neutral assignment, based on expected travel times, can therefore underestimate rare, but severe, delays that can lead to precautionary behavior and choices. To that extent, empirical evidence has consistently pointed to the fact that travelers tend to choose reliable routes rather than routes that are (on average) faster, but more unreliable \citep{liu_recker_chen_2004, carrion_levinson_2012}. Hence, capturing the upper tail, through CVaR-type constructs, is essential when hazards can lead to low-probability but high-severity outcomes.


We model travel time as a random variable following any distribution. We focus on risk-averse travelers whose route choices depend on both expected travel time and upper-tail risk (e.g., reliability or quantile measures). A key difficulty is in estimating the distributional parameters, which are necessary for quantifying tail behavior, from noisy and limited observations. This is why our distributionally robust approach that protects against calibration and specification errors can help. 

We develop an approach for estimating travel-time distributions under parameter uncertainty. Section~\ref{sec:introduction} introduces current research and challenges. Section~\ref{sec:problem} provides the problem statement, defines the uncertainty network, and presents the stochastic user equilibrium (SUE) framework. Section~\ref{sec:risk_sensitive_aggregation} summarizes limitations of using only expectation and conditional value at risk (CVaR) in the distribution. Section~\ref{sec:meancvar} presents a normalized coupled mean--CVaR certainty equivalent that preserves the scale of deterministic costs. Section~\ref{sec:dro} introduces a robust, ambiguity-aware equilibrium formulation to hedge against calibration error and distributional misspecification. Section~\ref{sec:sample_example_full} provides a simple example showcasing performance of the proposed structure. Section~\ref{sec:numerical} presents a case study in downtown Chicago, IL to test algorithmic performance.

\subsection{Literature overview}
\label{sec:literature}

SUE leverages a logit choice mechanism to map perceived transport link costs into path selection probabilities. How we quantify each link's perceived travel time is important for determining link use probabilities and equilibrium flows. When travelers are risk averse, equilibrium in stochastic network settings can still be established under broad conditions \citep{palma_picard_2006, lianeas_nikolova_stier-moses_2018, ordonez2010wardrop}. In networks affected by hazards, link travel times are random variables rather than deterministic constants. The model must compress a travel time distribution into a behaviorally meaningful reliability-based perceived time that reflects travelers' aversion to unreliability. Contemporary literature develops and values such reliability and variability measures. These range from scheduling-based reliability premia to additive variability measures and the valuation of travel time variance or reliability, including their interaction with congestion \citep{batley_2007, fosgerau2010value, fosgerau_engelson_2011, xiao_coulombel_palma_2017b}. These reliability-based time constructs directly shape travelers' perceived link costs and the logit probabilities. This determines the endogenous distribution of flows across links in equilibrium. For example, \cite{chen_zhou_2010, chen_zhou_lam_2011} define and extend an $\alpha$-reliable mean excess traffic equilibrium model that combines buffer time and tardy time concepts. This explicitly embeds tail delay sensitivity into the equilibrium mechanism, while \cite{nikolova_stier-moses_2014} characterize equilibrium under mean-standard deviation objectives and bound efficiency losses for risk-averse commuters.

To characterize risk-averse behavior in hazard-prone networks, the upper tail of the travel time distribution is fundamental. Rare but severe delays dominate perceived disutility and influence equilibrium route choice. \cite{xu_chen_cheng_hong_kam_lo_2014} study an $\alpha$-reliable mean excess traffic equilibrium model integrating buffer time and tardy time components beyond a reliability threshold. A parallel stream adopts parametric representations (e.g., lognormal or right-skewed families) to capture tail behavior \citep{wang_osaragi_2024, taylor_susilawati_2012, kim_mahmassani_2015}. \cite{zang_batley_xu_david_z.w._wang_2024} formalize the tail through an unreliability area, measuring the distribution's upper tail mass and severity. Their differences represent alternative weights on the unreliability area, reflecting different behavioral emphasis on extreme delays. This enables deriving the value of distribution tail and value of travel time reliability, quantifying willingness to pay to reduce upper tail risk.

However, there are some limitations of existing reliability-based traffic assignment models in hazard-prone networks. First, conventional perceived-cost specifications often summarize stochastic travel time too coarsely to control route reliability. As a result, routes with similar performance can be treated as equally attractive even when they have very different probabilities of exceeding a service threshold, and routes with rare but severe delay realizations may be misrepresented.

Second, if the underlying travel-time distribution is unreliable, biased, or subject to temporal drift, then penalizing tails under a fixed estimated distribution may underestimate the true risk faced by travelers. Moreover, tail-aware route-choice models can introduce substantial parameter sensitivity: nonlinear tail operators, when combined with the exponential truncated-logit kernel, may amplify small changes
in tail-related parameters into large shifts in route-choice probabilities and equilibrium flows.

Third, standard reliability-based formulations typically assume that link and route travel times follow fixed stationary distributions, with day-to-day observations treated as independent and identically distributed. In practice, these distributions are estimated from finite data and may be affected by sparse observations, sensor bias, sampling error, or temporal drift. These assumptions become restrictive under nonstationary hazard conditions, such as severe rainstorms, where episodic downpours, drainage cycles, heterogeneous capacity reductions, flooding, hydroplaning, and irregular storm timing can substantially alter travel-time behavior. As a result, historical travel-time distributions may have limited predictive validity for current or future hazardous conditions.

To address these limitations, this paper develops a risk- and ambiguity-aware truncated-logit SUE framework for hazard-prone transportation networks. The framework introduces a normalized risk-sensitive cost aggregation that balances ordinary operating conditions with behaviorally salient upper-tail delay risk, embeds the resulting generalized costs into truncated-logit route choice, and distinguishes path-based and potential-based formulations. The potential-based formulation preserves convex tractability, while the distributionally robust and regime-dependent extensions account for uncertainty in the estimated travel-time law and for hazard regimes with distinct travel-time behavior.

\subsection{Our contributions}


Our contributions are threefold. First,  we formulate a SUE model with scenario-dependent BPR-type link costs. These costs capture hazard-induced changes in free-flow travel time, capacity, congestion amplification, and incident delay, and are embedded in a truncated-logit route-choice structure. We also show why expectation-based route-cost perception is insufficient in hazard-prone networks. Mean travel time may ignore reliability and tail exposure, while pure CVaR may be overly conservative or weakly discriminative under catastrophic disruptions. This motivates a mean-CVaR certainty equivalent.

In addition, we introduce a normalized $(\alpha,\lambda)$-coupled mean-CVaR certainty equivalent. The parameter $\alpha$ identifies the adverse tail, while $\lambda$ governs risk sensitivity through an effective CVaR weight. This construction nests risk neutrality at $\lambda=\alpha$ and supports a stable risk-sensitive truncated-logit model. We then develop three risk-sensitive TSUE formulations. Approach A applies the certainty equivalent directly to path travel times and therefore admits an individual-level behavioral interpretation. Approaches B1 and B2 apply the risk functional to network potential functions, leading to tractable aggregate formulations. We prove that B1 and B2 are equivalent as reduced path-flow optimization problems. Under nondecreasing link travel-time functions and $\theta>0$, entropy regularization renders the reduced B1-B2 objective strictly convex. We also show the B1-B2 optimal path-flow solution is unique. We further establish a theoretical connection between individual risk perception and aggregate equilibrium. Under regime dominance, where all scenarios share a common mild-to-severe ordering across links, the path-based cost in Approach A coincides with the generalized marginal path cost induced by Approach B2. This result provides practical guidance for selecting $\alpha$ and for interpreting the scenario partition induced by $\alpha$. With aligned OD admissibility baselines, the two approaches generate the same truncated-logit choice probabilities and equilibrium flows.

Third, for the potential-based formulations, we develop a $1$-Wasserstein distributionally robust TSUE framework to handle epistemic uncertainty in estimated travel-time distributions. Instead of relying on a single nominal scenario law, the model evaluates the risk-sensitive potential under the worst-case distribution within a Wasserstein ambiguity set, thereby hedging against sampling error, calibration bias, support mismatch, and distributional misspecification. We further extend this formulation to piecewise stationary hazard environments by constructing structured, regime-dependent ambiguity sets, which separate within-regime uncertainty from between-regime shifts. The resulting TSUE-DRO model admits tractable dual, conic, and exchange-based reformulations, and produces equilibrium flows that are more stable under sparse data and nonstationary weather regimes. 

\section{Problem setting}
\label{sec:problem}
Let $G=(V, E)$ denote a directed transportation network, where $V$ is the set of nodes and $E$ the set of links. Each link $a \in E$ is assigned a probability signaling whether it can be traversed (open) or not (closed). Assuming link $a \in E$ is traversable, then it will operate under one of a series of mutually exclusive scenarios $\xi_a \in\{1, \ldots, S\}$: operating at habitual conditions, at reduced capacity due to hazardous events, or at increased congestion. Each scenario $s$ is associated with a probability $p_a^s=\mathbb{P}\left[\xi_a=s\right]$, with the parameters of the link depending on the scenario. The mathematical notation, including network modeling details, set definitions, decision variables, and other parameters can be found in Appendix~\ref{app:para_table}.

\subsection{Stochastic travel time function}
It is common to represent the link travel time on a transportation network with an extended Bureau of Public Roads (BPR) function \citep{yosef_sheffi_1984}:
\begin{equation}
  t_a\!\bigl(x_a,\xi_a\bigr)
  \;=\;
  t_a^{0}(\xi_a)\!
  \Bigl[
      1 + \alpha(\xi_a)\,
          \bigl(x_a / c_a(\xi_a)\bigr)^{\beta(\xi_a)}
  \Bigr]
  \;+\; \Delta_a(\xi_a),
  \label{eq:bpr}
\end{equation}
where $x_a$ is the link flow. The scenario-specific parameters  
$\bigl(t_a^{0}(\xi_a),\,c_a(\xi_a),\,\alpha(\xi_a),\,\beta(\xi_a),\,\Delta_a(\xi_a)\bigr)$  and their probabilities are $p_a^{s}:=\mathbb{P}[\xi_a=s]$. 

Following \citep{lam_tam_cao_li_2013}, rainfall intensity reduces free-flow speed, speed-at-capacity, and maximum flow capacity. Since the BPR coefficients $\alpha$ and $\beta$ are empirical calibration parameters rather than fixed physical constants, scenario-specific values $(\alpha_s, \beta_s)$ must be recalibrated to capture regime-dependent changes in link performance. Alternatively, one may retain the baseline $(\alpha, \beta)$ (e.g., (0.15, 4)) and represent weather effects through scenario-dependent free-flow travel times and/or capacity multipliers. Moreover, $\Delta_a(\xi_a)$ represents the additional delay induced by hazard impacts, and thus it is expected to vary with hazard intensity and operating conditions.

For every scenario $s$, the equation~\ref{eq:bpr} is continuously differentiable, strictly increasing, and convex in $x_a$ whenever $\beta_{s}\ge 1$. Because expectation is a convex combination of these functions, the expectation inherits monotonicity and convexity. Hence, the cost structure for the user equilibrium (UE) model is maintained, while also explicitly captures (i) free-flow speed changes, (ii) capacity reduction, (iii) congestion amplification, and (iv) fixed incident delays~$\Delta_{a,s}$.

\subsection{Entropy-based route choice with a truncated logit}
We model traveler route choices under uncertainty using an entropy approach with a truncated logit model. The certainty equivalent travel time for a path $k$ is derived as $\phi\left(t_k\right)=\mathbb{E}_{\xi}\left[t_k(x, \xi)\right]$,
where $t_k(x, \xi)=\sum_{a \in k} t_a\left(x_a, \xi_a\right)$, with $t_a\left(x_a, \xi_a\right)$ given by \eqref{eq:bpr}. The utility for path $k$ is: $U_k=-\theta \phi\left(t_k\right)+\delta_k$, using parameter $\theta>0$ as the travel time sensitivity, and $\delta_k$ being a random variable following a Gumbel distribution, leading to a logit model's choice probabilities.

For paths $k$ and $k^{\prime}$, the probability of choosing path $k$ over $k^{\prime}$ is:
\begin{equation}
\operatorname{Pr}\left(U_k \geq U_{k^{\prime}}\right)=\left(1+\exp \left[-\theta\left(\phi\left(t_{k^{\prime}}\right)-\phi\left(t_k\right)\right)\right]\right)^{-1}.
\end{equation}

\noindent We consider the truncated logit model proposed by \cite{tan_xu_chen_2024b}, which assigns zero probability to paths exceeding a threshold:
\begin{equation}
P_k^{o d}=\frac{\left[\exp \left(-\theta\left(\phi\left(t_k^{o d}\right)-\pi_{r s}\right)\right)-1\right]_{+}}{\sum_{\ell \in K_{o d}}\left[\exp \left(-\theta\left(\phi\left(t_{\ell}^{o d}\right)-\pi_{o d}\right)\right)-1\right]_{+}}
\label{eq:truncate_kernal}
\end{equation}
where $[x]_{+}=\max \{x, 0\}$. The derivation is provided in Appendix ~\ref{app:kkt}.

\subsection{Stochastic user equilibrium formulation}

Stochastic optimization offers a principled approach to hedge against the uncertainty that hazardous events incur for both link capacities and travel time functions. Due to the above settings, we provide an SP formulation of our problem. The SUE with a truncated logit (TSUE) formulation is integrated with chance constraints to ensure that only reliable paths are considered. The mathematical formulation is shown in \eqref{eq:exp_obj}--\eqref{eq:nennegative}. 
\begin{subequations}
\begin{align}
\min_{f,v}, \quad 
& \mathbb{E}\!\left[Z(f,\xi)\right], \quad Z(f,\xi)
= \sum_{a\in A}\int_{0}^{x_a} t_a(\omega,\xi)\,\mathrm{d}\omega
+\frac{1}{\theta}\sum_{o,d}\sum_{k\in K_{od}}
\Big[\big(f_k^{od}+1\big)\ln\big(f_k^{od}+1\big)-f_k^{od}\Big]
\label{eq:exp_obj}\\
\text{s.t.}\quad
& \sum_{k\in K_{od}} f_k^{od}=q_{od}, \quad \forall (o,d)
\label{eq:demand_conservation}\\
& x_a=\sum_{o,d}\sum_{k\in K_{od}} f_k^{od}\,\delta_{a,k}^{od}, \quad \forall a\in A
\label{eq:link_flow}\\
& f_k^{od}\ge 0,\quad \forall (o,d),\ k\in K_{od}; \qquad x_a\ge 0,\quad \forall a\in A
\label{eq:nennegative}
\end{align}
\end{subequations}

\noindent where $\delta_{a,k}^{od}\in\{0,1\}$ indicates whether route $k$ uses link $a$.

The objective minimizes the expected total travel time (first term) and includes an entropy term (second term) to reflect stochastic route choices, which is a standard practice in SUE models \citep{fisk_1980}. Flow conservation in \eqref{eq:demand_conservation} ensures that all OD demand is met. Link flow definitions in \eqref{eq:link_flow} aggregate route flows to link flows. Nonnegativity variable restrictions are in \eqref{eq:nennegative}. The formulation is consistent with a TSUE with chance constraints. Let us discuss the convexity of the objective function.

\begin{proposition}
The objective function in \eqref{eq:exp_obj} is convex and has a unique path flow solution.
\label{prop:convex}
\end{proposition}
\begin{proof}{Proof.}
The proof can be found in Appendix \ref{app: prop1}
\end{proof}

\section{Risk-sensitive cost aggregation}
\label{sec:risk_sensitive_aggregation}

\subsection{Limitations of expectation-based path cost aggregation}
\label{sec:mean_limitations}

Let $t_k^{od}(v,\xi):=\sum_{a\in k} t_a(x_a,\xi_a)$ denote the random travel time on path $k\in K_{od}$ under network state $\xi$ (e.g., precipitation, limited visibility, etc.). A standard risk-neutral choice sets the deterministic utility to the mean, $\phi_k^{od}(v):=\mathbb{E}_\xi\!\left[t_k^{od}(v,\xi)\right]$, which is analytically convenient (e.g., preserves convexity of the Beckmann potential, defined as $\sum_{a\in A}\int_{0}^{x_a} t_a(\omega)\,\mathrm{d}\omega$, a convex transportation network objective whose minimizer corresponds to the UE link flows when link travel times $t_a(\cdot)$ are increasing) but can be misleading under rare and highly disruptive hazards. 

However, the mean does not control reliability. For any fixed $\pi_{od}$, there exist random travel times $Y$ and $\tilde Y$ such that $\mathbb{E}[Y]=\mathbb{E}[\tilde Y]$ but $\mathbb{P}(Y>\pi_{od}) \neq \mathbb{P}(\tilde Y>\pi_{od})$. Hence, a route ranking (or truncation rule) based only on $\mathbb{E}[t_k^{od}]$ cannot distinguish routes with identical average performance but different reliability. Furthermore, if transportation network travelers evaluate delay through a nonlinear disutility $u(\cdot)$ (e.g., schedule delay penalties, costs due to missed connections), then the relevant quantity is $\mathbb{E}[u(t_k^{od})]$ rather than $u(\mathbb{E}[t_k^{od}])$. For convex $u(\cdot)$, Jensen's inequality yields $u\!\left(\mathbb{E}[t_k^{od}]\right)\le \mathbb{E}\!\left[u(t_k^{od})\right]$, so $\phi_k^{od}=\mathbb{E}[t_k^{od}]$ systematically comes short of penalizing variability and extreme realizations, the effects induced by heavy rain/flooding. Mean feasibility does not imply reliability:
\begin{equation}
\phi_k^{od}(v)\le \pi_{od}
\;\;\not\Rightarrow\;\;
\mathbb{P}\!\left(t_k^{od}(v,\xi)\le \pi_{od}\right)\ge 1-\eta
\quad (\eta\in(0,1)).
\label{eq:mean_not_chance}
\end{equation}
Thus, mean-based truncation may still assign nonzero probability to routes that frequently violate service thresholds on days affected by hazards, and may exclude routes with slightly larger means but substantially better reliability. These issues are augmented under regime mixtures. Let $W\in\mathcal{W}:=\{\mathrm{NR},\mathrm{HR},\mathrm{FL}\}$ denote weather regimes, such as normal rain (NR), heavy rain (HR), and flooding (FL). Then, the unconditional mean is decomposed as
\begin{equation}
\mathbb{E}\!\left[t_k^{od}\right]
=
\sum_{w\in\mathcal{W}}
\mathbb{P}(W=w)\,\mathbb{E}\!\left[t_k^{od}\mid W=w\right].
\label{eq:mixture_mean}
\end{equation}
When travelers have information that $W=\mathrm{FL}$, their travel time valuation is the conditional expectation $\mathbb{E}[t_k^{od}\mid W=\mathrm{FL}]$ (or a posterior mean), not the unconditional mean \eqref{eq:mixture_mean}.

In the truncated logit model, $\phi_k^{od}(v)$ affects path choice in two coupled ways: it defines which paths remain admissible and it determines the relative weights among admissible paths. Since
\begin{equation}
\left[\exp\!\left(-\theta\left(\phi_k^{od}(v)-\pi_{od}\right)\right)-1\right]_+>0
\quad \Longleftrightarrow \quad
\phi_k^{od}(v)<\pi_{od},
\label{eq:truncated_logit_admissions_path}
\end{equation}
the admissible set implied by mean-based aggregation is $K_{od}^{\mathrm{TL}}(v)
=\left\{k\in K_{od} \,\middle|\, \mathbb{E}_\xi\!\left[t_k^{od}(v,\xi)\right]<\pi_{od}\right\}$.

Moreover, for $k\in K_{od}^{\mathrm{TL}}(v)$, the truncated logit probability depends on $t_k^{od}(v,\xi)$ only through its expectation is
$
P_k^{od}(v)\ \propto\ \exp\!\left(-\theta\left(\mathbb{E}_\xi[t_k^{od}(v,\xi)]-\pi_{od}\right)\right)-1
$.
So two paths with equal means are treated as equally attractive (up to the Gumbel noise) even if their exceedance probabilities $\mathbb{P} (t_k^{od}>\pi_{od})$ differ substantially. The Appendix~\ref{app:expectation-example} gives an example how expectation-based truncation can be misleading in heavy-tailed or mixture settings.

\subsection{Conditional value at risk (CVaR) approach and limitation}

To address tail risk, we use Conditional Value at Risk (CVaR) as a tail-sensitive alternative for route cost perception. Since travel time distributions are often non-normal \citep{fosgerau_fukuda_2012}, CVaR is attractive: it requires no distributional assumption and directly quantifies expected loss in the adverse tail. CVaR focuses on the expected losses within the worst $(1-\alpha)$ fraction of scenarios \citep{rockafellar_uryasev_2002b}, is convex, coherent, and has been widely used in hazardous routing and reliability contexts \citep{toumazis_kwon_2016, hosseini_verma_2018, li_2019, su_kwon_2019}. Related travel-time measures include mean excess travel time (METT) and the travel time budget (TTB) \citep{chen_zhou_2010}, which are analogous to CVaR and VaR, respectively. The limitation is that TTB measures are often non-smooth and may lack favorable convexity properties, and optimizing a single METT (CVaR) level captures only one tail region and may be insensitive to improvements outside that region.

For total travel time (system potential) $Z(f,\xi)$ and confidence level $\alpha\in(0,1)$, CVaR is
\begin{equation}
\operatorname{CVaR}_\alpha(Z(f, \xi))=\inf _{\gamma \in \mathbb{R}}\left\{\gamma+\frac{1}{1-\alpha}
\mathbb{E}\left[(Z(f, \xi)-\gamma)_{+}\right]\right\},
\label{eq: cvar}
\end{equation}
where $(x)_{+}=\max \{x, 0\}$. It is commonly interpreted as the tail conditional mean beyond the $\alpha$-quantile (VaR), $\operatorname{CVaR}_\alpha(Z)=\mathbb{E}\!\bigl[\,Z \,\bigm|\, Z\ge\operatorname{VaR}_\alpha(Z)\bigr]$.

\begin{figure}[t]
    \centering
    \includegraphics[width=0.60\linewidth]{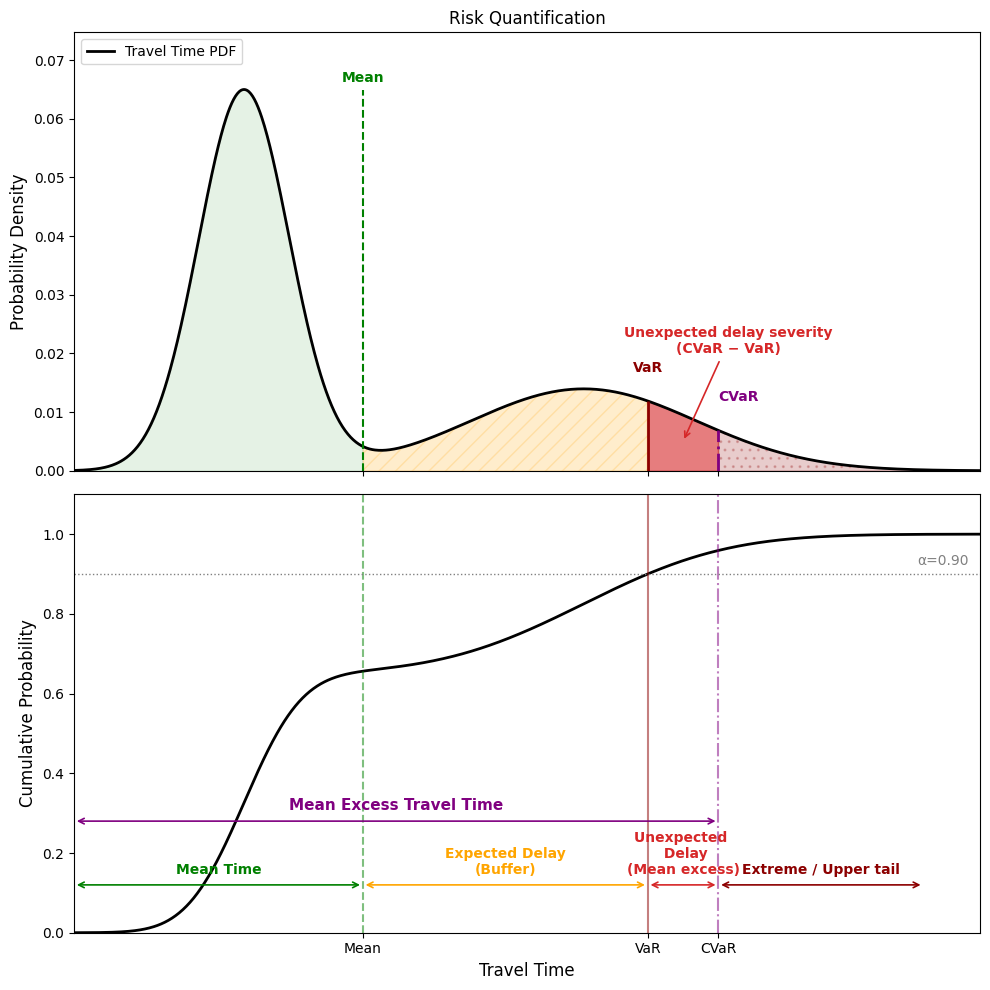}
    \caption{Illustration of travel-time tail-risk quantification using VaR and CVaR for a non-normal travel-time distribution. The top panel shows the probability density function and marks the mean, the $\alpha$-level VaR (the $\alpha$-quantile), and the corresponding CVaR (the conditional mean travel time given exceedance of VaR). The bottom panel shows the cumulative distribution function, where the horizontal reference at level $\alpha$ identifies $\operatorname{VaR}_\alpha$ and highlights the exceedance probability $1-\alpha$.}
    \label{fig:var_cvar_illustration}
\end{figure}

Figure~\ref{fig:var_cvar_illustration} provides a schematic interpretation of the relationship between the mean, $\operatorname{VaR}_\alpha$, and $\operatorname{CVaR}_\alpha$ under a representative non-normal travel-time distribution (e.g., mixture- or regime-driven conditions). In the probability density function panel, $\operatorname{VaR}_\alpha$ separates typical outcomes from the adverse tail with probability mass $1-\alpha$, while $\operatorname{CVaR}_\alpha$ summarizes the severity of that tail as a conditional mean, $\mathbb{E}[Z \mid Z \ge \operatorname{VaR}_\alpha(Z)]$. The separation $\operatorname{CVaR}_\alpha(Z)-\operatorname{VaR}_\alpha(Z)$ can be interpreted as a mean excess beyond the reliability threshold, complementing the quantile-based view provided by VaR. The cumulative probability panel makes the quantile definition explicit by showing that $P(Z \le \operatorname{VaR}_\alpha(Z))=\alpha$. This visualization motivates CVaR as a tail-sensitive reliability metric, while pointing to the limitations of optimizing a single tail level in hazard-dependent (regime-based) travel-time distributions.

\begin{proposition}
\label{proposition_prop2}
    $\operatorname{CVaR}_\alpha(Z(f, \xi)) \geq \mathbb{E}[Z(f, \xi)].$
\end{proposition}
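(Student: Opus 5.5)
The plan is to work directly from the Rockafellar--Uryasev variational definition \eqref{eq: cvar}. That formula expresses $\operatorname{CVaR}_\alpha(Z)$ as an infimum over $\gamma\in\mathbb{R}$, so it suffices to prove a pointwise lower bound: for every fixed $\gamma$,
\begin{equation*}
\gamma+\frac{1}{1-\alpha}\mathbb{E}\big[(Z-\gamma)_+\big]\;\ge\;\mathbb{E}[Z].
\end{equation*}
Taking the infimum over $\gamma$ then preserves the inequality and gives $\operatorname{CVaR}_\alpha(Z(f,\xi))\ge\mathbb{E}[Z(f,\xi)]$. Throughout, I will assume $\mathbb{E}|Z|<\infty$, which is automatic in the finite-scenario setting of Section~\ref{sec:problem}.

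The pointwise bound follows from two elementary facts. First, since $\alpha\in(0,1)$, we have $\frac{1}{1-\alpha}\ge 1$, and $(Z-\gamma)_+\ge 0$; hence $\frac{1}{1-\alpha}\mathbb{E}[(Z-\gamma)_+]\ge\mathbb{E}[(Z-\gamma)_+]$. Second, $(z-\gamma)_+\ge z-\gamma$ for every real $z$, so $\mathbb{E}[(Z-\gamma)_+]\ge\mathbb{E}[Z]-\gamma$. Chaining the two gives
\begin{equation*}
\gamma+\frac{1}{1-\alpha}\mathbb{E}\big[(Z-\gamma)_+\big]\;\ge\;\gamma+\mathbb{E}[Z]-\gamma\;=\;\mathbb{E}[Z],
\end{equation*}
as required.

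As a sanity check, and as an alternative route through the tail-mean interpretation, I would also note the following. Write $\operatorname{CVaR}_\alpha(Z)=\frac{1}{1-\alpha}\int_\alpha^1 \operatorname{VaR}_u(Z)\,du$ and $\mathbb{E}[Z]=\int_0^1 \operatorname{VaR}_u(Z)\,du$. Because the quantile function $u\mapsto\operatorname{VaR}_u(Z)$ is nondecreasing, its average over $[\alpha,1]$ is at least its average over $[0,1]$.

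No step here is a serious obstacle. The one point needing care is the conditional-expectation formula $\mathbb{E}[Z\mid Z\ge\operatorname{VaR}_\alpha]$ quoted after \eqref{eq: cvar}. With discrete scenarios there are atoms at the quantile, and that formula can differ from the variational definition. I will therefore base the proof on the variational form, for which the argument above is exact, and mention the quantile-integral version only as confirmation.
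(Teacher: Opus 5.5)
Your proof is correct, but it takes a different route from the paper's. The paper argues through the tail-mean interpretation. It splits at $\operatorname{VaR}_\alpha(Z)$ into $A=\{Z\le\operatorname{VaR}_\alpha(Z)\}$ and $B=\{Z>\operatorname{VaR}_\alpha(Z)\}$, takes $\mathbb{P}(A)=\alpha$, and identifies $\mathbb{E}[Z\mid B]$ with $\operatorname{CVaR}_\alpha(Z)$. It then combines the law of total expectation with $\mathbb{E}[Z\mid A]\le\operatorname{VaR}_\alpha(Z)\le\operatorname{CVaR}_\alpha(Z)$; the clean way to finish is $\mathbb{E}[Z]=\alpha\,\mathbb{E}[Z\mid A]+(1-\alpha)\operatorname{CVaR}_\alpha(Z)\le\operatorname{CVaR}_\alpha(Z)$. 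That route gives an intuitive picture, with the mean as a mixture of a lower and an upper conditional mean. However, it tacitly requires no atom at the quantile, so that $\mathbb{P}(Z\le\operatorname{VaR}_\alpha(Z))=\alpha$ and the conditional tail mean coincides with \eqref{eq: cvar}. That is exactly the caveat you raise, and it matters in the finite-scenario setting the paper actually uses. The paper's closing step also appeals to nonnegativity of $Z$. Your pointwise bound on the Rockafellar--Uryasev objective uses only $1/(1-\alpha)\ge 1$ and $(z-\gamma)_+\ge z-\gamma$, so it works directly from the definition for any integrable $Z$, with atoms and of any sign. It even recovers the equality case the paper mentions. Since $1/(1-\alpha)>1$ strictly and the infimum in \eqref{eq: cvar} is attained at some $\gamma^\star$, equality forces both $Z\le\gamma^\star$ and $Z\ge\gamma^\star$ almost surely. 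Your quantile-integral check is the atom-robust form of the paper's tail-mean idea.
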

\begin{proof}{Proof.}
The proof can be found in Appendix~\ref{app: prop2}.
\end{proof}

\begin{proposition}
\label{prop:cvar_saturation}
Assume $Z$ is bounded above by $M$ (i.e., $Z\le M$ a.s.) and that $\mathbb{P}(Z=M)\ge 1-\alpha$. Then, $\operatorname{CVaR}_\alpha(Z)=M$ and is independent of the distribution of $Z$ on $\{Z<M\}$.
\end{proposition}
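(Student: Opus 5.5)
I will work directly from the Rockafellar--Uryasev variational formula \eqref{eq: cvar} and prove $\operatorname{CVaR}_\alpha(Z)=M$ by establishing matching upper and lower bounds. The argument never uses the conditional law of $Z$ on $\{Z<M\}$, which gives the independence claim.

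For the upper bound, I evaluate the objective at the feasible choice $\gamma=M$. Since $Z\le M$ almost surely, $(Z-M)_+=0$ almost surely. The objective value is therefore $M$, so $\operatorname{CVaR}_\alpha(Z)\le M$.

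For the lower bound, I fix an arbitrary $\gamma\in\mathbb{R}$ and write $g(\gamma):=\gamma+\frac{1}{1-\alpha}\mathbb{E}[(Z-\gamma)_+]$. I will show $g(\gamma)\ge M$ by splitting into two cases.

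\emph{Case $\gamma\ge M$.} Here $g(\gamma)\ge\gamma\ge M$, because the expectation term is nonnegative.

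\emph{Case $\gamma<M$.} I restrict the expectation to the atom $\{Z=M\}$ and use $(Z-\gamma)_+\ge 0$ elsewhere:
\[
\mathbb{E}[(Z-\gamma)_+]\ \ge\ (M-\gamma)\,\mathbb{P}(Z=M)\ \ge\ (M-\gamma)(1-\alpha).
\]
Dividing by $1-\alpha$ gives
\[
g(\gamma)\ \ge\ \gamma+(M-\gamma)=M.
\]

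Taking the infimum over $\gamma$ yields $\operatorname{CVaR}_\alpha(Z)\ge M$. Combined with the upper bound, $\operatorname{CVaR}_\alpha(Z)=M$.

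Independence from the distribution on $\{Z<M\}$ is then immediate. The two bounds used only $Z\le M$ almost surely and $\mathbb{P}(Z=M)\ge1-\alpha$. Any two random variables satisfying these hypotheses, however their laws differ below $M$, therefore share the same CVaR, namely $M$.

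The main point requiring care is the lower-bound step. One could instead use the conditional-tail-mean interpretation $\mathbb{E}[Z\mid Z\ge \operatorname{VaR}_\alpha(Z)]$, but that formula needs adjustment when the distribution has an atom at the quantile. The variational route sidesteps this issue because it handles atoms directly.

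As a consistency check, I note that $\operatorname{VaR}_\alpha(Z)=M$ under the hypotheses, since $\mathbb{P}(Z<M)\le\alpha$. This agrees with the minimizer $\gamma=M$ found above. It also fits Proposition~\ref{proposition_prop2}: $\mathbb{E}[Z]\le M=\operatorname{CVaR}_\alpha(Z)$.
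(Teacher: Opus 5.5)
Your proof is correct and follows the paper's argument essentially step for step: the upper bound comes from evaluating the Rockafellar--Uryasev objective at $\gamma=M$, the lower bound from the case split $\gamma\ge M$ versus $\gamma<M$ with the expectation restricted to the atom $\{Z=M\}$, and the independence claim from the fact that only the two hypotheses are used. One small caveat on your side remark, which does not affect the proof: $\operatorname{VaR}_\alpha(Z)=M$ holds for the upper $\alpha$-quantile, but if $\mathbb{P}(Z=M)=1-\alpha$ exactly, the lower $\alpha$-quantile can lie strictly below $M$, and then every $\gamma$ between the lower quantile and $M$ also minimizes the objective, so $\gamma=M$ is a minimizer but need not be the unique one.
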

\begin{proof}{Proof.}
The proof can be found in Appendix \ref{app: prop3}.
\end{proof}

\begin{remark}
In regime-switching travel-time models, a subset of disruption scenarios generates extreme delays or infeasibilities (e.g., closures or failed trips). In practice, these realizations are represented through bounding at a large penalty value $M$ (due to operational time budgets or simulation horizons), i.e., $Z=\min(\tilde Z, M)$ for an underlying continuous travel time $\tilde Z$. This transformation creates an atom at $M$ with $\mathbb{P}(Z=M)=\mathbb{P}(\tilde Z\ge M)$, so the condition $\mathbb{P}(Z=M)\ge 1-\alpha$ corresponds to disruption probability exceeding the tail mass that CVaR averages over.
\end{remark}

Proposition~\ref{prop:cvar_saturation} formalizes a limitation of single-level CVaR: when the tail is dominated by a catastrophic outcome with probability mass at least $1-\alpha$, $\operatorname{CVaR}_\alpha$ collapses to that extreme value and becomes insensitive to improvements in the remainder of the distribution. In truncated logit, setting $\phi_k^{od}(v)=\operatorname{CVaR}_\alpha(t_k^{od}(v,\xi))$ turns truncation into a CVaR-based feasibility screen ($\operatorname{CVaR}_\alpha(t_k^{od})<\pi_{od}$). This can over-truncate alternatives that are attractive on typical days but occasionally disrupted. It can also become weakly discriminative under systemic hazards, where many routes share similar catastrophic tail levels, such as when an entire area is submerged during a flood incident.

\subsection{Mean-CVaR routing objective}
\label{sec:mean_cvar}

Expectation underweights hazards characterized as rare but severe, while pure $\operatorname{CVaR}_\alpha$ can be conservative on typical days and may become insensitive under systemic disruptions. Figure~\ref{fig:var_cvar_illustration} provides geometric intuition. Mean–CVaR trades off central tendency against expected delay beyond the reliability threshold. The mean component captures typical travel conditions, while the CVaR component emphasizes tail risk. We therefore adopt a mean–CVaR compromise:
\begin{equation}
\min_{f}\ (1-\lambda)\,\mathbb{E}[Z(f,\xi)] + \lambda\,\operatorname{CVaR}_\alpha(Z(f,\xi)) = \mathbb{E}[Z] + \lambda\Big(\operatorname{CVaR}_\alpha(Z)-\mathbb{E}[Z]\Big) \text{s.t.\ }\eqref{eq:demand_conservation}--\eqref{eq:nennegative},
\quad \lambda\in[0,1],
\end{equation}
so $\operatorname{CVaR}_\alpha(Z)-\mathbb{E}[Z]$ acts as a tail-risk premium. In practice, here $\alpha$ translates to where the tail is, while $\lambda$ indicates how much we care about the tail. By Proposition~\ref{proposition_prop2}, the weighted objective is bounded between mean and CVaR known as
$
\mathbb{E}[Z]
\le
(1-\lambda)\mathbb{E}[Z]+\lambda \operatorname{CVaR}_\alpha(Z)
\le
\operatorname{CVaR}_\alpha(Z).
$
Using \eqref{eq: cvar}, the standard epigraph form is
\begin{equation}
\min_{f,\gamma}\ 
(1-\lambda)\mathbb{E}[Z(f,\xi)]
+\lambda\left(\gamma+\frac{1}{1-\alpha}\mathbb{E}\left[(Z(f,\xi)-\gamma)_+\right]\right)
\quad \text{s.t.\ }\eqref{eq:demand_conservation}--\eqref{eq:nennegative}.
\end{equation}

To incorporate risk sensitivity into truncated logit without collapsing into a pure tail-feasibility rule, we define the path-level mean--CVaR certainty equivalent
\begin{equation}
\phi_k^{od}(v)
:=
(1-\lambda)\,\mathbb{E}_\xi\!\left[t_k^{od}(v,\xi)\right]
+
\lambda\,\operatorname{CVaR}_\alpha\!\left(t_k^{od}(v,\xi)\right), \text{with admissible set } K_{od}^{\lambda}(v):=\left\{k\in K_{od}\ \middle|\ \phi_k^{od}(v)<\pi_{od}\right\}.
\end{equation}
By Proposition~\ref{proposition_prop2}, this screen interpolates between the CVaR-only and mean-only screens:
\begin{equation}
K_{od}^{\mathrm{CVaR}}(v)
\subseteq
K_{od}^{\lambda}(v)
\subseteq
K_{od}^{\mathbb{E}}(v),
\end{equation}
where $K_{od}^{\mathrm{CVaR}}(v):=\{k:\operatorname{CVaR}_\alpha(t_k^{od})<\pi_{od}\}$ and
$K_{od}^{\mathbb{E}}(v):=\{k:\mathbb{E}[t_k^{od}]<\pi_{od}\}$.

If $\operatorname{CVaR}_\alpha\!\left(t_k^{od}(v,\xi)\right)>\mathbb{E}_\xi\!\left[t_k^{od}(v,\xi)\right]$, then $\phi_k^{od}(v)<\pi_{od}$ is equivalent to:
\begin{equation}
(1-\lambda)\mathbb{E}_\xi\!\left[t_k^{od}(v,\xi)\right]+\lambda \operatorname{CVaR}_\alpha\!\left(t_k^{od}(v,\xi)\right)<\pi_{od}
\quad \Longleftrightarrow \quad
\lambda < \frac{\pi_{od}-\mathbb{E}_\xi\!\left[t_k^{od}(v,\xi)\right]}{\operatorname{CVaR}_\alpha\!\left(t_k^{od}(v,\xi)\right)-\mathbb{E}_\xi\!\left[t_k^{od}(v,\xi)\right]}.
\end{equation}
Hence, $\lambda$ can be interpreted as a screening-intensity parameter: increasing $\lambda$ progressively down-weights and eventually truncates paths with large tail-risk premia $\operatorname{CVaR}_\alpha\!\left(t_k^{od}(v,\xi)\right) -\mathbb{E}_\xi\!\left[t_k^{od}(v,\xi)\right]$, while retaining sensitivity to a typical day's performance through $\mathbb{E}_\xi\!\left[t_k^{od}(v,\xi)\right]$.

\section{Risk Averse TSUE-Stochastic Programming (TSUE-SP)}
\label{sec:meancvar}

Disruptions associated with hazards produce right-skewed, long-tailed travel times. Perceived travel costs must reflect both typical and extreme delays. Pure tail criteria can be overly conservative or weakly discriminative. With RU scaling $1/(1-\alpha)$ inside an exponential (truncated) logit kernel, they may amplify the tail. This causes probability collapse and unstable equilibrium flows. We therefore use a normalized mean–CVaR certainty equivalent that preserves tail awareness while bounding its effective weight. This improves calibration and numerical stability, without rescaling the travel time units.

We incorporate risk sensitivity in the TSUE-SP framework using a certainty equivalent. $\alpha\in(0,1)$ sets the reliability cutoff, an external service standard that identifies the tail threshold. We fix $\alpha$ and introduce a behavioral parameter $\lambda$ to tune tail sensitivity, which maps to an effective mixing weight $\bar\lambda(\alpha,\lambda)$. Setting $\lambda=\alpha$ recovers risk neutrality while moderating tail amplification.

\subsection{A normalized $(\alpha,\lambda)$-coupled mean-CVaR certainty equivalent}
\label{subsec:normalized_ce}

We adopt a normalized $(\alpha,\lambda)$-coupled mean-CVaR certainty equivalent that preserves the
scale of deterministic costs. Let $\alpha\in(0,1)$ be the CVaR confidence level and
$\lambda\in[0,1]$ a behavioral parameter. Define the effective mixing weight
\begin{equation}
\label{eq:effective_weight}
\bar\lambda(\alpha,\lambda)
:=
\frac{\alpha-\lambda}{1+\alpha-2\lambda},
\qquad \lambda\le \alpha,
\end{equation}
so that $\bar\lambda(\alpha,\lambda)\in[0,1]$ on $\lambda\le\alpha$, with $\bar\lambda(\alpha,\alpha)=0$.
For any random cost $Y$, define
\begin{equation}
\label{eq:CE_generic}
\Phi_{\alpha,\lambda}(Y)
:=
\bigl(1-\bar\lambda(\alpha,\lambda)\bigr)\,\mathbb{E}[Y]
+
\bar\lambda(\alpha,\lambda)\,\mathrm{CVaR}_\alpha(Y),
\qquad \lambda\le \alpha.
\end{equation}
Since $\mathrm{CVaR}_\alpha(Y)\ge \mathbb{E}[Y]$ for costs, the induced risk premium is
\begin{equation}
\label{eq:ce_minus_mean}
\Phi_{\alpha,\lambda}(Y)-\mathbb{E}[Y]
=
\bar\lambda(\alpha,\lambda)\Big(\mathrm{CVaR}_\alpha(Y)-\mathbb{E}[Y]\Big)\ \ge\ 0,
\qquad \lambda\le \alpha.
\end{equation}
Thus $\lambda=\alpha$ is risk-neutral ($\bar\lambda=0$), while $\lambda<\alpha$ is risk-averse
($\bar\lambda>0$). We restrict to $\lambda\le\alpha$ because $\lambda>\alpha$ yields
$\bar\lambda(\alpha,\lambda)<0$ and can undermine convexity.

\begin{proposition}
\label{prop:3}
For any feasible $f$ and fixed $\alpha\in(0,1)$, $\Phi_{\alpha, \lambda}(f)$ is non-increasing in $\lambda$.
\end{proposition}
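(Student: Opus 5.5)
The plan is to reduce the claim to a one-variable monotonicity statement about the effective weight $\bar\lambda(\alpha,\lambda)$. By \eqref{eq:ce_minus_mean}, for the relevant random cost $Y$ (either a path time $t_k^{od}(v,\xi)$ or the potential $Z(f,\xi)$ at the fixed feasible $f$) we have
\begin{equation*}
\Phi_{\alpha,\lambda}(Y)=\mathbb{E}[Y]+\bar\lambda(\alpha,\lambda)\bigl(\mathrm{CVaR}_\alpha(Y)-\mathbb{E}[Y]\bigr).
\end{equation*}
For fixed $f$ and $\alpha$, the quantities $\mathbb{E}[Y]$ and $\mathrm{CVaR}_\alpha(Y)$ do not depend on $\lambda$. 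By Proposition~\ref{proposition_prop2}, the bracket $D:=\mathrm{CVaR}_\alpha(Y)-\mathbb{E}[Y]$ is nonnegative. Thus $\Phi_{\alpha,\lambda}$ is an affine function of $\bar\lambda$ with nonnegative slope $D$, and it suffices to show that $\lambda\mapsto\bar\lambda(\alpha,\lambda)$ is non-increasing on the admissible range $\lambda\in[0,\alpha]$.

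Next I differentiate \eqref{eq:effective_weight}. On $\lambda\le\alpha<1$, the denominator satisfies $1+\alpha-2\lambda\ge 1-\alpha>0$, so $\bar\lambda$ is smooth there. The quotient rule gives
\begin{equation*}
\frac{\partial\bar\lambda}{\partial\lambda}=\frac{-(1+\alpha-2\lambda)+2(\alpha-\lambda)}{(1+\alpha-2\lambda)^2}=\frac{\alpha-1}{(1+\alpha-2\lambda)^2}<0,
\end{equation*}
since $\alpha<1$. Hence $\bar\lambda$ is strictly decreasing in $\lambda$. Combined with $D\ge 0$, the chain rule yields $\partial\Phi_{\alpha,\lambda}/\partial\lambda=D\,\partial\bar\lambda/\partial\lambda\le 0$, which gives the non-increasing property. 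Equality holds exactly when $D=0$, i.e., when the cost has no tail premium.

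One could avoid calculus by comparing $\lambda_1<\lambda_2\le\alpha$ directly. Cross-multiplying the positive denominators, $\bar\lambda(\alpha,\lambda_1)-\bar\lambda(\alpha,\lambda_2)$ has the sign of $(1-\alpha)(\lambda_2-\lambda_1)>0$. I would mention this as a check.

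There is no real obstacle. The only point needing care is justifying that $\mathbb{E}[Y]$ and $\mathrm{CVaR}_\alpha(Y)$ are $\lambda$-free, so the dependence on $\lambda$ enters solely through $\bar\lambda$. This holds because $f$, and hence the induced link flows $x$ and the random costs, are fixed. A second point is confirming that the denominator stays positive on the restricted domain $\lambda\le\alpha$.
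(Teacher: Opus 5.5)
Your proposal is correct and follows essentially the same route as the paper's proof: both rewrite $\Phi_{\alpha,\lambda}=\mathbb{E}+\bar\lambda(\alpha,\lambda)\bigl(\mathrm{CVaR}_\alpha-\mathbb{E}\bigr)$, invoke Proposition~\ref{proposition_prop2} for nonnegativity of the bracket, and compute $\partial\bar\lambda/\partial\lambda=(\alpha-1)/(1+\alpha-2\lambda)^2<0$. Your check that the denominator stays positive on $\lambda\le\alpha$ and your calculus-free cross-multiplication comparison are welcome additions that the paper leaves implicit.
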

\begin{proof}{Proof.}
See Appendix~\ref{app: prop4}.
\end{proof}

\begin{proposition}
\label{prop:4}
For a risk-averse traveler (i.e., $\lambda<\alpha$), $\Phi_{\alpha, \lambda}(f)$ is non-decreasing in $\alpha$.
\end{proposition}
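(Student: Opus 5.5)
The plan is to write $\Phi_{\alpha,\lambda}$ as the mean plus a nonnegative risk premium, as in \eqref{eq:ce_minus_mean}:
\begin{equation*}
\Phi_{\alpha,\lambda}(Y)=\mathbb{E}[Y]+\bar\lambda(\alpha,\lambda)\,\Big(\mathrm{CVaR}_\alpha(Y)-\mathbb{E}[Y]\Big),
\end{equation*}
with $Y=Z(f,\xi)$, or the path travel time, for a fixed feasible $f$. The term $\mathbb{E}[Y]$ does not depend on $\alpha$. It therefore suffices to show that the premium is non-decreasing in $\alpha$ on the region $\alpha>\lambda$. Increasing $\alpha$ with $\lambda$ fixed keeps us inside this region, so the risk-averse restriction is preserved. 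The premium is a product of two factors, and I will show each one is nonnegative and non-decreasing in $\alpha$. This is enough, because the product of two nonnegative non-decreasing functions is non-decreasing.

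\emph{First factor.} A direct computation gives
\begin{equation*}
\frac{\partial \bar\lambda}{\partial\alpha}=\frac{(1+\alpha-2\lambda)-(\alpha-\lambda)}{(1+\alpha-2\lambda)^2}=\frac{1-\lambda}{(1+\alpha-2\lambda)^2}\ \ge 0 .
\end{equation*}
The denominator is well defined and positive, since $1+\alpha-2\lambda=(1-\lambda)+(\alpha-\lambda)>0$. Nonnegativity of $\bar\lambda$ for $\lambda<\alpha$ was already noted after \eqref{eq:effective_weight}.

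\emph{Second factor.} Nonnegativity of $\mathrm{CVaR}_\alpha(Y)-\mathbb{E}[Y]$ is Proposition~\ref{proposition_prop2}. What remains is the monotonicity of $\alpha\mapsto\mathrm{CVaR}_\alpha(Y)$, which I expect to be the only real obstacle. A quantile-averaging argument would require care with atoms, such as the penalty atom at $M$ in Proposition~\ref{prop:cvar_saturation}, so I will avoid it. Instead I will use the Rockafellar--Uryasev representation \eqref{eq: cvar}. For each fixed $\gamma$, the map $\alpha\mapsto \gamma+\frac{1}{1-\alpha}\mathbb{E}[(Y-\gamma)_+]$ is non-decreasing on $(0,1)$, because $\mathbb{E}[(Y-\gamma)_+]\ge 0$ and $1/(1-\alpha)$ is increasing. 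A pointwise infimum over $\gamma$ of functions that are each non-decreasing in $\alpha$ is itself non-decreasing in $\alpha$. This gives $\mathrm{CVaR}_{\alpha_1}(Y)\le\mathrm{CVaR}_{\alpha_2}(Y)$ for $\alpha_1<\alpha_2$ with no distributional assumptions, so it also covers scenario-based and atomic laws.

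Combining the two factors, for $\lambda<\alpha_1<\alpha_2<1$ we get
\begin{equation*}
\bar\lambda(\alpha_1,\lambda)\big(\mathrm{CVaR}_{\alpha_1}-\mathbb{E}\big)\le\bar\lambda(\alpha_2,\lambda)\big(\mathrm{CVaR}_{\alpha_1}-\mathbb{E}\big)\le\bar\lambda(\alpha_2,\lambda)\big(\mathrm{CVaR}_{\alpha_2}-\mathbb{E}\big).
\end{equation*}
Adding $\mathbb{E}[Y]$ to each side yields the claim.
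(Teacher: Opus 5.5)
Your proof is correct and matches the paper's. The paper writes $\Phi_{\alpha_2,\lambda}-\Phi_{\alpha_1,\lambda}=\bar\lambda_2(C_2-C_1)+(\bar\lambda_2-\bar\lambda_1)(C_1-\mu)$ using the same three facts ($\partial_\alpha\bar\lambda=(1-\lambda)/(1+\alpha-2\lambda)^2>0$, monotonicity of $\mathrm{CVaR}_\alpha$ in $\alpha$, and Proposition~\ref{proposition_prop2}), which is exactly your two-step chain of inequalities. Your one addition is the Rockafellar--Uryasev infimum argument for the monotonicity of $\alpha\mapsto\mathrm{CVaR}_\alpha(Y)$, which the paper simply asserts; it is valid for arbitrary, including atomic, laws.
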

\begin{proof}{Proof.}
See Appendix~\ref{app: prop5}.
\end{proof}

\begin{figure}[htbp]
  \centering
  \includegraphics[width= 0.6\textwidth]{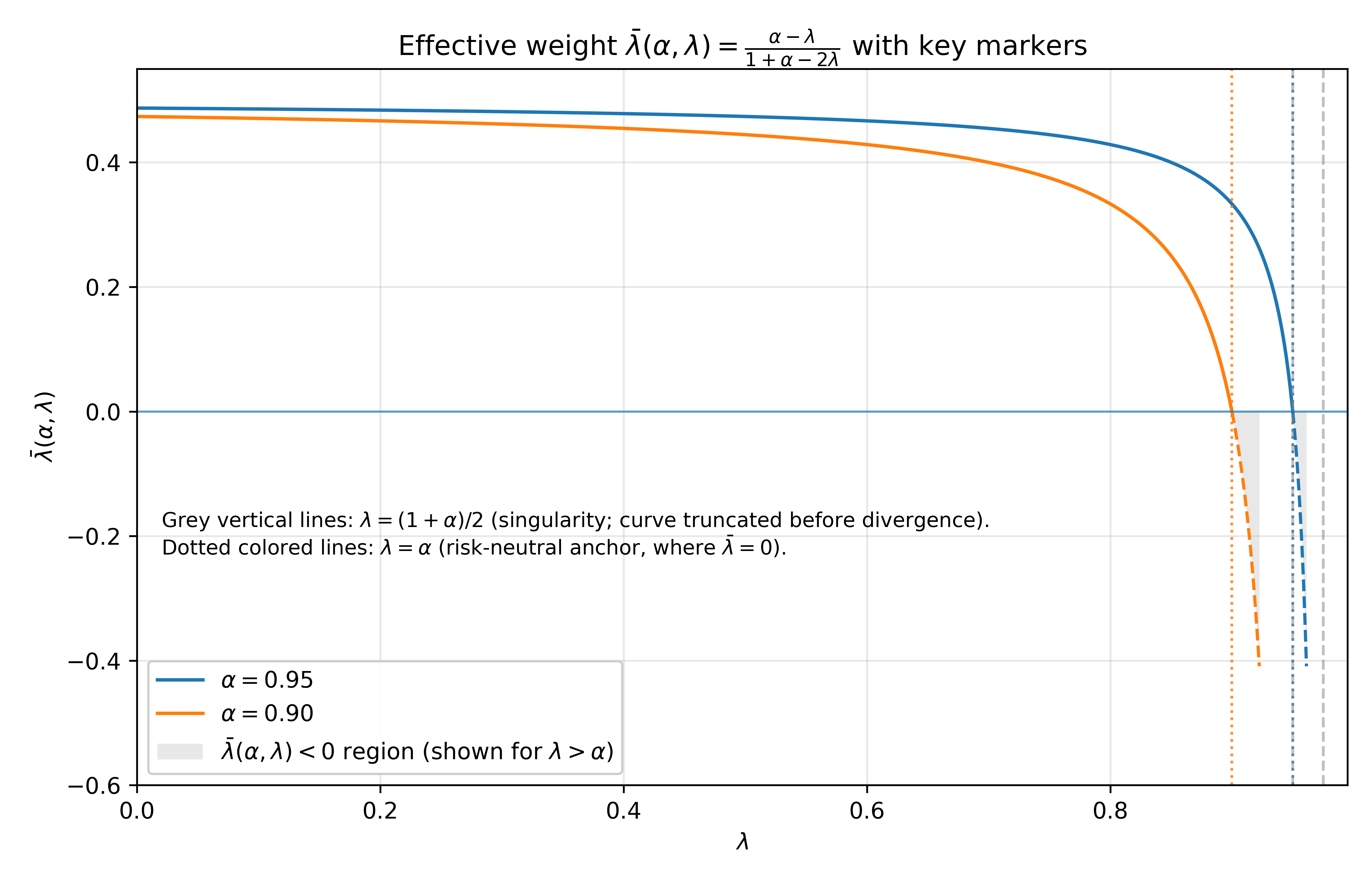}
  \caption{Effective CVaR mixing weight $\bar{\lambda}(\alpha, \lambda)=\frac{\alpha-\lambda}{1+\alpha-2 \lambda}$ as a function of the behavioral parameter $\lambda$ for representative confidence levels $\alpha \in\{0.90,0.95\}$. Colored dotted vertical lines mark the risk-neutral anchor $\lambda=\alpha$ (where $\bar{\lambda}=0$ ). Grey dashed vertical lines indicate the singularity at $\lambda=(1+\alpha) / 2$, where the mapping diverges; curves are truncated before this point. The shaded region highlights $\bar{\lambda}<0$ for $\lambda>\alpha$, motivating the restriction $\lambda \leq \alpha$ used to preserve convexity.}
  \label{fig:lambda_range}
\end{figure}

Figure~\ref{fig:lambda_range} shows that $\bar\lambda(\alpha,\lambda)$ decreases to $0$ as $\lambda$ approaches $\alpha$, clarifying the risk-neutral anchor at $\lambda=\alpha$. The mapping diverges at $\lambda=(1+\alpha)/2$ and becomes negative for $\lambda>\alpha$, motivating $\lambda\le\alpha$. Practically, the curves are relatively flat over moderate $\lambda$ (e.g., $\lambda\in[0,0.8]$), so many values in this range induce similar effective tail weights.

To determine whether the certainty-equivalent cost $\Phi_{\alpha,\lambda}(Y)$ falls in the buffer (yellow) region or the mean-excess (red) region shown in Figure~\ref{fig:var_cvar_illustration}, we define
\begin{equation}
\tau_\alpha(Y):=\frac{\mathrm{VaR}_\alpha(Y)-\mathbb{E}[Y]}{\mathrm{CVaR}_\alpha(Y)-\mathbb{E}[Y]}.
\end{equation}
$\Phi_{\alpha,\lambda}(Y)$ lies in the buffer interval (Mean--$\mathrm{VaR}_\alpha$) iff $\bar{\lambda}(\alpha,\lambda)\le \tau_\alpha(Y)$, and it lies in the mean-excess segment ($\mathrm{VaR}_\alpha$--$\mathrm{CVaR}_\alpha$) only if $\bar{\lambda}(\alpha,\lambda)\ge\tau_\alpha(Y)$. Under \eqref{eq:effective_weight} with $\lambda\le\alpha$, $\bar\lambda$ is capped by $\alpha/(1+\alpha)<1/2$, so mean-excess positioning requires $\tau_\alpha(Y)\le \alpha/(1+\alpha)$;
otherwise, $\Phi_{\alpha,\lambda}(Y)$ remains in the buffer region for all admissible $\lambda$.

Embedding tail-sensitive costs in an exponential logit kernel can amplify risk in equilibrium. In the RU form, $\mathrm{CVaR}_\alpha$ scales tail exceedance by $1/(1-\alpha)$. In our potential-based models, the tail term effectively enters with multiplier $\mathcal{O}\!\left(\lambda/(1-\alpha)\right)$. For high reliability levels (e.g., $\alpha=0.95$ where $1/(1-\alpha)=20$), the combination with $\exp(-\theta\cdot)$ can induce probability collapse, extreme sensitivity, and numerical instability. We therefore impose $\lambda \le \alpha/(1+\alpha)$, which bounds tail amplification as $\frac{\lambda}{1-\alpha}\ \le\ \frac{\alpha}{1-\alpha^2}$. For $\alpha=0.95$, this reduces the maximum factor from $20$ to approximately $9.74$, tempering tail sensitivity with no rescaling to the travel time units.

We use two complementary risk-sensitive truncated-logit TSUE constructs. Approach A (path-based) applies the certainty equivalent $\Phi_{\alpha,\lambda}$ directly to each path travel time in the kernel. This captures tail-aware perception and makes truncation driven by reliability. However, it breaks link-additive separability and makes the active tail set flow-dependent. A convex form exists only in special cases. Approaches B1–B2 (potential-based) apply the same risk functional at the system level: B1 to a scenario-dependent TSUE potential, B2 to the congestion potential only. Both recover risk-adjusted generalized costs from the Karush-Kuhn-Tucker (KKT) conditions. For $\lambda\le\alpha$, this yields a single convex stochastic program and preserves standard TSUE computational structure. B2 keeps $\theta$ outside $\Phi_{\alpha,\lambda}$, retaining its interpretation as the logit dispersion parameter.

\subsection{Approach A: Path-based risk perception embedded in TSUE}
\label{subsec:path_based_ra_tl}

Approach~A replaces the deterministic cost in truncated logit by a risk-adjusted certainty
equivalent of random path travel time. Under scenarios $\{\xi^1,\ldots,\xi^S\}$ with probabilities
$p_s$, let $t_a^s(x_a)$ be scenario $s$ link travel time, and define the path travel time $t_{k}^{od,s}(v)
:=\sum_{a\in k} t_a^s(x_a),
\quad s=1,\ldots,S$, with link flows $x_a=\sum_{o,d}\sum_{k\in K_{od}} f_k^{od}\delta_{a,k}^{od}$. For each $k\in K_{od}$, the deterministic generalized cost entering truncated logit is the normalized certainty equivalent
\begin{equation}
\label{eq:path_ce_coupled}
\phi_{k,\alpha,\lambda}^{od}(v)
:=
\Phi_{\alpha,\lambda}\!\left(t_k^{od}(v,\xi)\right)
=
\bigl(1-\bar\lambda(\alpha,\lambda)\bigr)\, \sum_{s=1}^S p_s\,t_{k}^{od,s}(v)
+
\bar\lambda(\alpha,\lambda)\, \mathrm{CVaR}_{\alpha}\!\left(t_{k}^{od}(v,\xi)\right),
\qquad \lambda\le \alpha,
\end{equation}
anchored at $\lambda=\alpha$ where $\phi_{k,\alpha,\alpha}^{od}(v)= \sum_{s=1}^S p_s\,t_{k}^{od,s}(v)$ (risk-neutral).

Using the RU representation, $c_k^{od}(v)$ admits
\begin{equation}
\label{eq:path_cvar_ru}
\mathrm{CVaR}_{\alpha}\!\left(t_{k}^{od}(v,\xi)\right)=
\min_{\gamma_{k}^{od}\in\mathbb{R}}
\left\{
\gamma_{k}^{od}
+
\frac{1}{1-\alpha}\sum_{s=1}^S p_s\,u_{k,s}^{od}
\ \middle|\ 
u_{k,s}^{od}\ge t_{k}^{od,s}(v)-\gamma_{k}^{od},\ \ u_{k,s}^{od}\ge 0
\right\}.
\end{equation}

Given $\phi_{k,\alpha,\lambda}^{od}(v)$, the admissible set is $K_{od}^{\alpha,\lambda}(v):=
\left\{k\in K_{od} \ \middle|\ \phi_{k,\alpha,\lambda}^{od}(v)<\pi_{od}\right\}$, and truncated-logit choice probabilities are
\begin{equation}
\label{eq:ra_truncated_logit_A}
P_{k}^{od}(v)=
\frac{\left[\exp\!\left(-\theta\left(\phi_{k,\alpha,\lambda}^{od}(v)-\pi_{od}\right)\right)-1\right]_+}
{\sum_{\ell\in K_{od}}
\left[\exp\!\left(-\theta\left(\phi_{\ell,\alpha,\lambda}^{od}(v)-\pi_{od}\right)\right)-1\right]_+},
\qquad k\in K_{od}.
\end{equation}
The corresponding risk-averse truncated-logit equilibrium is the fixed point
\begin{equation}
\label{eq:ra_tl_fixed_point_A}
f_k^{od} = q_{od}\,P_k^{od}(v),
\qquad 
x_a=\sum_{o,d}\sum_{k\in K_{od}} f_k^{od}\delta_{a,k}^{od},
\qquad f_k^{od}\ge 0.
\end{equation}

Approach~A needs a natural fixed-point/variational inequality  formulation. Applying a tail functional (CVaR/mean--CVaR) to path sums breaks link-additive separability, introduces cross-path coupling on overlapping routes, and makes the active tail set flow-dependent. Together with the endogenous truncation set $K_{od}^{\alpha,\lambda}(v)$, these features typically preclude a global Beckmann-type potential except in special cases (e.g., separable risk across links or non-overlapping paths). Hence, Approach~A is best tackled by fixed-point method (e.g, method of successive averages).

\subsection{Approach B1: TSUE via a certainty equivalent of the total TSUE potential}
\label{subsec:ra_ce_B1}

Approach~B1 applies the risk functional to the full scenario-dependent TSUE potential, i.e., to congestion plus the entropy regularization. It admits a single-level convex stochastic programming reformulation for $\lambda\le \alpha$.

Let $\Psi(f)$ denote the standard entropy regularization term used in TSUE:
\begin{equation}
\label{eq:entropy_term}
\Psi(f)
:=
\sum_{o,d}\sum_{k\in K_{od}}
\Big[(f_k^{od}+1)\ln(f_k^{od}+1)-f_k^{od}\Big].
\end{equation}
Define the scenario-dependent congestion potential as
$
\widehat Z^s(f)
:=
\sum_{a\in A}\int_0^{x_a} t_a^s(\omega)\,d\omega,
\quad
x_a=\sum_{o,d}\sum_{k\in K_{od}} f_k^{od}\delta_{a,k}^{od},
\quad s=1,\ldots,S,
$
and the corresponding total scenario TSUE potential as
\begin{equation}
\label{eq:Zs_total}
Z^s(f)
:=
\widehat Z^s(f)+\frac{1}{\theta}\Psi(f),
\qquad s=1,\ldots,S.
\end{equation}

Let $Z(f,\xi)$ denote the random variable taking values $Z^s(f)$ with probability $p_s$. Under Approach~B1, the system objective is the certainty equivalent of $Z(f,\xi)$:
\begin{equation}
\label{eq:Phi_B1}
\Phi_{\alpha,\lambda}^{\mathrm{B1}}(f)
:=
\Phi_{\alpha,\lambda}\!\big(Z(f,\xi)\big)
=
\bigl(1-\bar\lambda(\alpha,\lambda)\bigr)\,\mathbb{E}\!\left[Z(f,\xi)\right]
+
\bar\lambda(\alpha,\lambda)\,\mathrm{CVaR}_\alpha\!\left(Z(f,\xi)\right),
\qquad \lambda\le \alpha.
\end{equation}
The risk-averse TSUE under Approach~B1 is minimize the $\Phi_{\alpha,\lambda}^{\mathrm{B1}}(f)$.
Using the RU representation of $\mathrm{CVaR}_\alpha(\cdot)$, the problem is equivalent to:
\begin{subequations}
\begin{align}
\label{eq:slcp_B1}
\min_{f,\gamma,\;u_s \ge 0}\ 
& \bigl(1-\bar\lambda(\alpha,\lambda)\bigr)\sum_{s=1}^S p_s\, Z^s(f)
+\bar\lambda(\alpha,\lambda)\,\gamma
+\frac{\bar\lambda(\alpha,\lambda)}{1-\alpha}\sum_{s=1}^S p_s\,u_s
\\
\text{s.t.}\ 
& u_s \ \ge\ Z^s(f)-\gamma, \quad u_s \ \ge\ 0, \quad \eqref{eq:demand_conservation}--\eqref{eq:nennegative}, \quad  s=1,\ldots,S, \quad \gamma\in\mathbb{R} 
\end{align}
\end{subequations}
Convexity follows because each $Z^s(f)$ is convex in $f$ (Beckmann potential plus convex entropy term), and the RU epigraph form preserves convexity.

\subsection{Approach B2: TSUE via a certainty equivalent of the congestion potential}
\label{subsec:ra_ce_B2}

Approach~B2 applies the risk functional to the scenario-dependent congestion potential only, and keeps the entropy term outside the risk functional. This preserves the behavioral interpretation of $\theta$ as the (inverse) dispersion parameter of the truncated-logit kernel.

Let $\widehat Z(f,\xi)$ denote the random variable taking values $\widehat Z^s(f)$ with probability $p_s$. Under Approach~B2, the risk-adjusted system objective is
\begin{equation}
\label{eq:Phi_B2}
\Phi_{\alpha,\lambda}^{\mathrm{B2}}(f)
:=
\Phi_{\alpha,\lambda}\!\big(\widehat Z(f,\xi)\big)
=
\bigl(1-\bar\lambda(\alpha,\lambda)\bigr)\,\mathbb{E}\!\left[\widehat Z(f,\xi)\right]
+
\bar\lambda(\alpha,\lambda)\,\mathrm{CVaR}_\alpha\!\left(\widehat Z(f,\xi)\right),
\qquad \lambda\le \alpha.
\end{equation}
The risk-averse TSUE is formulated as
\begin{equation}
\label{eq:ra_tsue_B2}
\min_{f}\ \Phi_{\alpha,\lambda}^{\mathrm{B2}}(f)+\frac{1}{\theta}\Psi(f)
\quad \text{s.t.}\quad
\eqref{eq:demand_conservation}--\eqref{eq:nennegative}.
\end{equation}

Let $\widehat Z^s(f):=\widehat Z(f,\xi^s)$. Then,
\begin{equation}
\label{eq:cvar_ru_system}
\mathrm{CVaR}_\alpha(\widehat Z(f,\xi))
=
\min_{\gamma\in\mathbb{R},\ u_s\ge 0}
\left\{
\gamma+\frac{1}{1-\alpha}\sum_{s=1}^S p_s u_s
\ \middle|\
u_s\ge \widehat Z^s(f)-\gamma,\ \ s=1,\ldots,S
\right\}.
\end{equation}

At optimality of \eqref{eq:cvar_ru_system}, the KKT conditions imply the existence of tail weights $\chi_s\in[0,1]$ such that $0\le \chi_s\le 1, \sum_{s=1}^S p_s \chi_s = 1-\alpha,$ and a subgradient of $\mathrm{CVaR}_\alpha(\widehat Z(f,\xi))$ can be written as a tail-weighted combination of scenario gradients:
\begin{equation}
\label{eq:cvar_subgrad}
\partial_f\,\mathrm{CVaR}_\alpha(\widehat Z(f,\xi))
=
\frac{1}{1-\alpha}\sum_{s=1}^S p_s \chi_s \nabla_f \widehat Z^s(f).
\end{equation}
Intuitively, $\chi_s=1$ for scenarios strictly in the worst $(1-\alpha)$ tail, $\chi_s=0$ for non-tail scenarios, and $\chi_s\in(0,1)$ only for scenarios on the VaR boundary.

Define the scenario $s$ marginal path travel time induced by the Beckmann potential:
\begin{equation}
\label{eq:marginal_path_time}
\tau_{k}^{od,s}(v)
:=
\frac{\partial \widehat Z^s(f)}{\partial f_k^{od}}
=
\sum_{a\in k} t_a^s(x_a),
\qquad k\in K_{od},\ s=1,\ldots,S.
\end{equation}
Combining \eqref{eq:Phi_B2} with \eqref{eq:cvar_subgrad}--\eqref{eq:marginal_path_time} yields the induced risk-adjusted generalized marginal cost
\begin{equation}
\label{eq:gen_path_cost}
g_k^{od}(f)
:=
\partial_{f_k^{od}}\Phi_{\alpha,\lambda}^{\mathrm{B2}}(f)
=
\bigl(1-\bar\lambda(\alpha,\lambda)\bigr)\sum_{s=1}^S p_s\,\tau_{k}^{od,s}(v)
+
\frac{\bar\lambda(\alpha,\lambda)}{1-\alpha}\sum_{s=1}^S p_s\,\chi_s\,\tau_{k}^{od,s}(v).
\end{equation}
Thus, under Approach~B2, the deterministic cost that governs truncated logit screening and weighting is the endogenously induced generalized marginal cost $g_k^{od}(f)$.

Let $\mu_{od}$ be the Lagrange multiplier associated with the OD conservation constraint $\sum_{k\in K_{od}} f_k^{od}=q_{od}$. The KKT conditions for \eqref{eq:ra_tsue_B2} imply that, for each OD pair and each path $k\in K_{od}$,
\begin{equation}
\label{eq:kkt_positive}
g_k^{od}(f) + \frac{1}{\theta}\ln\!\left(f_k^{od}+1\right)=\mu_{od}
\quad \text{if } f_k^{od}>0,
\end{equation}
and $g_k^{od}(f)\ge \mu_{od}$ if $f_k^{od}=0$.
Solving \eqref{eq:kkt_positive} and enforcing nonnegativity yields the truncated form
\begin{equation}
\label{eq:flow_kernel_B2}
f_k^{od}
=
\Big[\exp\!\big(\theta(\mu_{od}-g_k^{od}(f))\big)-1\Big]_+.
\end{equation}
Therefore, the induced choice probabilities $P_k^{od}=f_k^{od}/q_{od}$ admit a truncated-logit kernel with deterministic cost $g_k^{od}(f)$:
\begin{equation}
\label{eq:prob_kernel_B2}
P_k^{od}(f)
=
\frac{\Big[\exp\!\big(-\theta(g_k^{od}(f)-\mu_{od})\big)-1\Big]_+}
{\sum_{\ell\in K_{od}}\Big[\exp\!\big(-\theta(g_\ell^{od}(f)-\mu_{od})\big)-1\Big]_+}.
\end{equation}
In applications where a feasibility threshold $\pi_{od}$ is specified (e.g., a travel-time budget), $\mu_{od}$ can be interpreted as the endogenous reservation cost consistent with demand $q_{od}$, or $\pi_{od}$ may be calibrated to match $\mu_{od}$ at equilibrium.

Using \eqref{eq:cvar_ru_system}, \eqref{eq:ra_tsue_B2} is equivalent to the following single-level convex program:
\begin{subequations}
\begin{align}
\label{eq:slcp_B2}
\min_{f,\gamma,\;u_s \ge 0}\ 
& \bigl(1-\bar\lambda(\alpha,\lambda)\bigr)\sum_{s=1}^S p_s \widehat Z^s(f)
+\bar\lambda(\alpha,\lambda)\,\gamma
+\frac{\bar\lambda(\alpha,\lambda)}{1-\alpha}\sum_{s=1}^S p_s u_s
+\frac{1}{\theta}\Psi(f)
\\
\text{s.t.}\ 
& u_s \ \ge\ \widehat Z^s(f)-\gamma, \quad  u_s \ \ge\ 0, \quad \eqref{eq:demand_conservation}--\eqref{eq:nennegative}, \quad s=1,\ldots,S, \quad \gamma\in\mathbb{R} 
\end{align}
\end{subequations}

\begin{proposition}
\label{prop:B1B2_unique_path}
Suppose that, for every link $a\in A$ and every scenario $s=1,\ldots,S$,
the scenario-dependent link travel-time function $t_a^s(x_a)$ is continuous
and nondecreasing in $x_a\ge 0$. Let $\theta>0$, $\alpha\in(0,1)$, and
$0\le \lambda\le \alpha$, so that the effective mixing weight defined in
\eqref{eq:effective_weight} satisfies
$\bar\lambda(\alpha,\lambda)\in[0,1]$. Then Approaches~B1 and~B2 have the
same unique optimal path-flow solution.
\end{proposition}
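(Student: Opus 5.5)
The plan is to show that the B1 and B2 objectives coincide identically on the common feasible set, after eliminating the auxiliary variables $(\gamma,u)$. Uniqueness then follows from strict convexity of the common reduced objective. Both the epigraph forms \eqref{eq:slcp_B1} and \eqref{eq:slcp_B2} are understood in reduced form: for fixed $f$, minimizing over $(\gamma,u_s)$ returns, by the RU representation \eqref{eq:cvar_ru_system}, exactly $\Phi^{\mathrm{B1}}_{\alpha,\lambda}(f)$ and $\Phi^{\mathrm{B2}}_{\alpha,\lambda}(f)+\frac{1}{\theta}\Psi(f)$, respectively. Hence the path-flow projections of the optimal sets are the minimizer sets of these reduced functions over the polytope $F$ defined by \eqref{eq:demand_conservation}--\eqref{eq:nennegative}. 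This holds even though $\gamma$ (a VaR value) need not be unique.

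\textbf{Step 1 (equality of objectives).} By \eqref{eq:Zs_total}, $Z^s(f)=\widehat Z^s(f)+c(f)$ with $c(f):=\frac1\theta\Psi(f)$, which does not depend on $s$. So for fixed $f$ the random variable $Z(f,\xi)$ is the translate $\widehat Z(f,\xi)+c(f)$ by a deterministic constant. Expectation is translation equivariant. CVaR is as well: substituting $\gamma\mapsto\gamma+c$ in \eqref{eq: cvar} gives $\mathrm{CVaR}_\alpha(Y+c)=\mathrm{CVaR}_\alpha(Y)+c$. Since the weights $1-\bar\lambda$ and $\bar\lambda$ sum to one, $\Phi_{\alpha,\lambda}(Y+c)=\Phi_{\alpha,\lambda}(Y)+c$. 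Therefore $\Phi^{\mathrm{B1}}_{\alpha,\lambda}(f)=\Phi^{\mathrm{B2}}_{\alpha,\lambda}(f)+\frac1\theta\Psi(f)$ for every $f\in F$. The two reduced problems are thus the same optimization problem and have identical optimal path-flow sets.

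\textbf{Step 2 (existence).} Assuming $q_{od}>0$ for every OD pair, $F$ is a nonempty compact polytope. The reduced objective is continuous: the $\widehat Z^s$ are continuous because the $t_a^s$ are continuous, $\Psi$ is continuous on $f\ge 0$, and CVaR over finitely many scenarios is continuous in the scenario values. Weierstrass then gives a minimizer.

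\textbf{Step 3 (strict convexity and uniqueness).} Since $t_a^s$ is nondecreasing, $x_a\mapsto\int_0^{x_a}t_a^s$ is convex. Composing with the linear map $f\mapsto x$ in \eqref{eq:link_flow} shows each $\widehat Z^s$ is convex in $f$. The expectation $\sum_s p_s\widehat Z^s$ is therefore convex. For the CVaR term, I will use that $\mathrm{CVaR}_\alpha$, viewed as a function of the scenario vector $(z_1,\dots,z_S)$, is convex and componentwise nondecreasing. Convexity holds because it is the partial minimum over $\gamma$ of the jointly convex function $\gamma+\frac1{1-\alpha}\sum_s p_s(z_s-\gamma)_+$. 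Monotonicity holds because $(\cdot)_+$ is nondecreasing. A convex nondecreasing function of convex functions is convex, so $f\mapsto\mathrm{CVaR}_\alpha(\widehat Z(f,\xi))$ is convex. With $\bar\lambda\in[0,1]$ both weights are nonnegative, so $\Phi^{\mathrm{B2}}_{\alpha,\lambda}$ is convex. Finally, $\Psi$ is a separable sum of terms $h(y)=(y+1)\ln(y+1)-y$ with $h''(y)=1/(y+1)>0$ on $y\ge0$, so $\Psi$ is strictly convex on the nonnegative orthant. Since $\theta>0$, the sum $\Phi^{\mathrm{B2}}_{\alpha,\lambda}+\frac1\theta\Psi$ is strictly convex on $F$ and has at most one minimizer there. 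Together with Step 2 and Step 1, B1 and B2 share the same unique optimal path flow.

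The main obstacle is the convexity of the CVaR composition in Step 3. In particular, the monotonicity of CVaR in the scenario values must be invoked, since convexity of the outer map alone does not suffice. Care is also needed in the reduction from the epigraph forms to the reduced problems, so that non-uniqueness of $\gamma$ and $u$ is not confused with non-uniqueness of $f$.
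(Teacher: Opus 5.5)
Your proposal is correct and follows essentially the same route as the paper's proof. Translation equivariance of expectation and CVaR gives $\Phi^{\mathrm{B1}}_{\alpha,\lambda}(f)=\Phi^{\mathrm{B2}}_{\alpha,\lambda}(f)+\frac{1}{\theta}\Psi(f)$. The CVaR term is convex; your monotone-composition argument is the same fact the paper gets by partial minimization over the Rockafellar--Uryasev epigraph. The entropy term supplies strict convexity, and compactness plus continuity give existence. The only differences are the order (the paper proves uniqueness for B2 first and transfers it to B1 at the end) and your more explicit handling of the non-unique auxiliary variables $(\gamma,u_s)$, which the paper leaves implicit.
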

\begin{proof}{Proof.}
The proof can be found in Appendix~\ref{app: prop10}.
\end{proof}

\begin{proposition}
\label{prop:tsue_convex}
The mathematical program in~\eqref{eq:slcp_B1} and ~\eqref{eq:slcp_B2} can be solved as a regular TSUE/SUE problem with auxiliary variables for the CVaR representation.
\end{proposition}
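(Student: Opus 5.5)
The plan is to interpret ``can be solved as a regular TSUE/SUE problem'' as three claims. First, \eqref{eq:slcp_B1} and \eqref{eq:slcp_B2} are convex programs over a polyhedral feasible set in the lifted variables $(f,\gamma,u)$. Second, at an optimum their KKT conditions reduce, after eliminating the auxiliary variables $(\gamma,u)$, to the truncated-logit equilibrium conditions \eqref{eq:kkt_positive}--\eqref{eq:prob_kernel_B2}, with a generalized path cost that is link-additive. Third, any standard TSUE algorithm can be applied to this reduced structure, since it only needs a convex objective, the demand and link-flow constraints, and the link-additive cost.

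\textbf{Convexity in the lifted variables.} For each $s$, $\widehat Z^s$ is a sum of integrals of continuous nondecreasing functions composed with the linear map $f\mapsto x$, so it is convex. The function $\Psi$ is convex on $f\ge 0$ because $(y+1)\ln(y+1)-y$ has second derivative $1/(y+1)>0$. The constraint $u_s\ge \widehat Z^s(f)-\gamma$ (or $u_s\ge Z^s(f)-\gamma$ for B1) is a sublevel set of the convex function $\widehat Z^s(f)-\gamma-u_s$, hence convex. The constraints $u_s\ge 0$, \eqref{eq:demand_conservation}--\eqref{eq:nennegative} are linear. With $\lambda\le\alpha$ we have $\bar\lambda\in[0,1]$, so the objective is a nonnegative combination of convex terms plus linear terms in $(\gamma,u)$. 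The problem is therefore convex, and by \eqref{eq:cvar_ru_system} its projection onto $f$ equals \eqref{eq:ra_tsue_B2} (respectively \eqref{eq:Phi_B1}). Existence of an optimum follows because the feasible $f$-set is compact and, for fixed $f$, the inner minimization over $\gamma$ is attained at $\mathrm{VaR}_\alpha$. Slater's condition holds trivially (take $u_s$ large), so KKT conditions are necessary and sufficient.

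\textbf{KKT reduction.} I would introduce multipliers $p_s\chi_s\bar\lambda/(1-\alpha)\ge0$ for the epigraph constraints. Stationarity in $u_s$ then gives $0\le\chi_s\le 1$, and stationarity in $\gamma$ gives $\sum_s p_s\chi_s=1-\alpha$. This recovers exactly the tail weights behind \eqref{eq:cvar_subgrad}. Stationarity in $f_k^{od}$ gives $g_k^{od}(f)+\theta^{-1}\ln(f_k^{od}+1)=\mu_{od}$ on the support, which is \eqref{eq:kkt_positive}--\eqref{eq:flow_kernel_B2}. The key point is that $g_k^{od}=\sum_{a\in k}\tilde t_a(x)$ with the link-additive effective cost
\[
\tilde t_a=\sum_s p_s\Big[(1-\bar\lambda)+\tfrac{\bar\lambda}{1-\alpha}\chi_s\Big]t_a^s(x_a).
\]
For fixed $\chi$, this is the Beckmann potential of a regular TSUE with weighted scenario link costs. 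For B1 the same computation applies, except that the entropy term receives total weight $(1-\bar\lambda)+\bar\lambda=1$ through $\sum_s p_s\chi_s=1-\alpha$. This reproduces the same entropy term, consistent with Proposition~\ref{prop:B1B2_unique_path}.

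\textbf{Main obstacle and solution scheme.} The main difficulty is that $\chi$ is endogenous: the active tail set depends on $f$, so the problem is a regular TSUE only once the tail weights are fixed. I would handle this with an outer/inner scheme. Given $(\gamma,\chi)$, solve a standard TSUE with costs $\tilde t_a$ (for example by a Frank--Wolfe or path-based method with entropy). Then update $\gamma$ and $\chi$ from the sorted scenario potentials $\widehat Z^s(f)$ as VaR and tail indicators. Convergence of this scheme to the optimum follows from convexity and the KKT sufficiency established above. Alternatively, one can solve directly in the lifted variables, where the auxiliary variables enter only linearly, so a generic convex TSUE solver applies unchanged. Uniqueness of the path flows is inherited from Proposition~\ref{prop:B1B2_unique_path}, while $(\gamma,u)$ need not be unique.
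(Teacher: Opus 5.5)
Your convexity argument in the lifted variables $(f,\gamma,u)$ is essentially the paper's proof. The paper takes B2, introduces $\gamma$ and $u_s\ge 0$ via the RU representation, notes that $\widehat Z^s$ and $\Psi$ are convex and that the RU constraints describe the epigraph of a convex function with no integer variables, and concludes that standard convex/TSUE methods apply. You are more precise than the paper in treating $u_s\ge\widehat Z^s(f)-\gamma$ as a convex, not linear, constraint. Your KKT reduction is correct and goes beyond the paper: it yields a link-additive cost $\tilde t_a$ with distorted scenario weights $p_s[(1-\bar\lambda)+\bar\lambda\chi_s/(1-\alpha)]$ that sum to one. So is your check that in B1 the entropy gradient receives total weight one.

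The step that fails is the claim that your outer/inner scheme converges to the optimum ``from convexity and the KKT sufficiency.'' KKT sufficiency only certifies a limit point that already satisfies the whole KKT system; it gives no reason the iteration has such a limit. Your scheme is best-response dynamics for the saddle form $\min_f\max_\chi L(f,\chi)$, where maximizing over admissible tail weights recovers the B2 objective and $L$ is linear in $\chi$. The optimum often sits at a VaR tie with fractional $\chi_s$, which resetting $\chi$ to $0/1$ tail indicators from the sorted potentials never produces.

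Here is a concrete failure. Take two parallel links $A,B$ with demand $q>0$, $p_1=p_2=\tfrac12$, $\alpha=\tfrac12$, and $\lambda=0$ (so $\bar\lambda=\tfrac13$). Let $t_A^1(x)=t_B^2(x)=x$ and $t_B^1=t_A^2\equiv 0$. With $\chi=(1,0)$ the inner TSUE uses $\tilde t_A=\tfrac23 x_A$ and $\tilde t_B=\tfrac13 x_B$, so $x_A<x_B$. Hence $\widehat Z^2>\widehat Z^1$ and the update gives $\chi=(0,1)$. By symmetry the next solve returns the mirror image, so the iterates cycle forever. Meanwhile the unique optimum is $x_A=x_B=q/2$ with $\chi=(\tfrac12,\tfrac12)$.

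You can repair this in one of two ways:
\begin{itemize}
\item Drop the scheme and rely on your alternative of solving the lifted convex program directly. That is all the paper's proof claims, and it suffices for the proposition.
\item Replace best response with a method that has a genuine convergence guarantee for this nonsmooth convex problem, e.g., projected subgradient steps in $f$ with diminishing step sizes (using $\chi$ to form the subgradient), or cutting planes on the CVaR epigraph.
\end{itemize}
Even the lifted route is not literally ``a TSUE solver unchanged.'' The constraints $u_s\ge\widehat Z^s(f)-\gamma$ couple $f$ nonlinearly to $(\gamma,u)$, so Frank--Wolfe or MSA over the demand polytope must be adapted. The paper's proof shares this looseness.
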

\begin{proof}{Proof.}
The proof can be found in Appendix~\ref{app: prop8}.
\end{proof}

\subsection{Path- and potential-based risk equivalence under regime dominance}
\label{subsec:equiv_path_potential}

Approach~A applies the certainty equivalent in \eqref{eq:CE_generic} directly
to path travel times, yielding the path-based generalized cost in
\eqref{eq:path_ce_coupled}. Approach~B2 applies the same risk functional to the
scenario-dependent congestion potential, and its induced generalized marginal
cost is given by \eqref{eq:gen_path_cost}. These two constructions generally do
not coincide because the CVaR tail selected by a path travel-time random
variable need not be the same as the CVaR tail selected by the congestion
potential. This subsection identifies conditions under which the two costs
coincide.

Assume that the scenario-dependent link travel times admit a common
network-wide ordering from mild to severe:
\begin{equation}
\label{eq:regime_dominance}
t_a^{1}(x)\le t_a^{2}(x)\le \cdots \le t_a^{S}(x),
\qquad
\forall a\in A,\ \forall x\ge 0.
\end{equation}
This condition captures regime-level hazards, such as
$\mathrm{NR}\prec\mathrm{HR}\prec\mathrm{FL}$, that worsen all links in the
same scenario order, although the magnitude of deterioration may differ across
links.

\begin{proposition}[Equivalence under network-wide regime dominance]
\label{prop:equiv_A_B2_regime}
Consider any feasible path-flow vector $f$ and its induced link-flow vector
$v$. Under the parameter domain used in \eqref{eq:effective_weight}, if \eqref{eq:regime_dominance} holds and the CVaR subgradient selector is chosen consistently at VaR ties, then for every OD pair $(o,d)$ and every path $k\in K_{od}$,
\begin{equation}
\label{eq:phi_equals_g}
\phi_{k,\alpha,\lambda}^{od}(v)=g_k^{od}(f).
\end{equation}
Consequently, if the admissibility baselines are aligned, for example $\pi_{od}=\mu_{od}$ at equilibrium, then the truncated-logit kernels in \eqref{eq:ra_truncated_logit_A} and \eqref{eq:prob_kernel_B2} induce the same admissible path sets, the same choice probabilities, and hence the same equilibrium path and link flows.
\end{proposition}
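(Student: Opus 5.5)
The plan is to reduce both sides of \eqref{eq:phi_equals_g} to the same tail-weighted scenario average. The key fact is that, under \eqref{eq:regime_dominance}, every relevant random variable orders the scenarios the same way.

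First, I fix $f$ and $v$. Because each $t_a^s$ is evaluated at the common flow $x_a$, summing \eqref{eq:regime_dominance} over $a\in k$ gives $t_k^{od,1}(v)\le\cdots\le t_k^{od,S}(v)$ for every path $k$. Likewise, integrating \eqref{eq:regime_dominance} from $0$ to $x_a$ and summing over links gives $\widehat Z^1(f)\le\cdots\le\widehat Z^S(f)$. So the path travel time $t_k^{od}(v,\xi)$ and the congestion potential $\widehat Z(f,\xi)$ are comonotone: both are nondecreasing functions of the scenario index $s$.

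Second, I use the discrete RU representation \eqref{eq:path_cvar_ru}. For a random variable taking nondecreasing values $y_1\le\cdots\le y_S$ with probabilities $p_s$, its CVaR equals $\frac{1}{1-\alpha}\sum_s p_s\chi_s y_s$. Here $\chi_s\in[0,1]$ and $\sum_s p_s\chi_s=1-\alpha$, and the weights are obtained greedily: $\chi_s=1$ for the highest-index scenarios until the mass $1-\alpha$ is filled, a fractional $\chi$ on the boundary (VaR) scenario, and $0$ below it. I would justify this through the LP dual of \eqref{eq:path_cvar_ru}, $\max\{\frac{1}{1-\alpha}\sum p_s\chi_s y_s: 0\le\chi_s\le1,\ \sum p_s\chi_s=1-\alpha\}$, whose greedy optimum depends only on the ordering of the $y_s$. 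That ordering is the same for $y_s=t_k^{od,s}(v)$ and for $y_s=\widehat Z^s(f)$, so one weight vector $\chi^\ast$ is simultaneously optimal for every path CVaR and for the potential CVaR.

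Third, I identify the subgradient. The Danskin/KKT characterization behind \eqref{eq:cvar_subgrad} allows any optimal dual $\chi$ of \eqref{eq:cvar_ru_system}. The hypothesis that the selector is chosen consistently at VaR ties means we pick $\chi=\chi^\ast$. Using \eqref{eq:marginal_path_time}, this gives
\[
g_k^{od}(f)=(1-\bar\lambda)\sum_s p_s\tau_k^{od,s}(v)+\frac{\bar\lambda}{1-\alpha}\sum_s p_s\chi^\ast_s t_k^{od,s}(v),
\]
which is exactly $(1-\bar\lambda)\mathbb{E}[t_k^{od}]+\bar\lambda\,\mathrm{CVaR}_\alpha(t_k^{od})=\phi^{od}_{k,\alpha,\lambda}(v)$ by \eqref{eq:path_ce_coupled}.

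The main obstacle is ties, where scenarios with equal values make the optimal $\chi$ non-unique. Ties in the path values need not coincide with ties in $\widehat Z$: a path can have $t_k^{od,s}=t_k^{od,s+1}$ while $\widehat Z^s<\widehat Z^{s+1}$. This turns out to be harmless in one direction. The index-greedy $\chi^\ast$ is optimal for every nondecreasing sequence, so the path CVaR value is unaffected by which optimal $\chi$ is used. The consistency hypothesis is needed only to pin the B2 subgradient selector to $\chi^\ast$, and I will state that explicitly.

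Finally, the consequence follows directly. With $\phi=g$ and $\pi_{od}=\mu_{od}$, the numerators in \eqref{eq:ra_truncated_logit_A} and \eqref{eq:prob_kernel_B2} coincide term by term. Hence the admissible sets, where the bracket is positive, and the normalized probabilities coincide as well. Therefore the fixed points \eqref{eq:ra_tl_fixed_point_A} and \eqref{eq:flow_kernel_B2} coincide. The B2 solution is unique by Proposition~\ref{prop:B1B2_unique_path}, so the equilibrium path and link flows of the two approaches are the same.
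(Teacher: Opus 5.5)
Your proposal is correct and follows the paper's route. Regime dominance gives one scenario ordering shared by every path travel time and by the congestion potential, hence one CVaR tail selector, which together with \eqref{eq:marginal_path_time} turns \eqref{eq:gen_path_cost} into \eqref{eq:path_ce_coupled}; your LP-dual/greedy description of the weights and your treatment of ties (including why the consistency hypothesis is needed only on the B2 side) make rigorous what the paper states in a few lines. The one loose end is your final appeal to uniqueness: with $\pi_{od}=\mu_{od}$ fixed, uniqueness of the B2 minimizer shows that the B2 solution is a fixed point of \eqref{eq:ra_tl_fixed_point_A}, but it does not by itself rule out other fixed points of the normalized kernel (these need not satisfy $\sum_{\ell}[\cdot]_+=q_{od}$, so they need not solve \eqref{eq:flow_kernel_B2}), so it is cleaner to conclude, as the paper implicitly does, that the two fixed-point maps are identical.
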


\begin{proof}{Proof.}
Condition~\eqref{eq:regime_dominance} implies that every path travel-time
random variable appearing in \eqref{eq:path_ce_coupled} inherits the same
scenario ordering. Since the scenario congestion potential is obtained by
integrating the same ordered link travel-time functions, the random congestion
potential in \eqref{eq:Phi_B2} also inherits the same scenario ordering.

Therefore, the path-level CVaR terms in \eqref{eq:path_ce_coupled} and the
potential-level CVaR term whose subgradient is represented in
\eqref{eq:cvar_subgrad} use the same tail-weight selector. By
\eqref{eq:marginal_path_time}, the scenario gradient of the congestion
potential with respect to path flow is the corresponding scenario path travel
time. Substituting the common tail selector into \eqref{eq:gen_path_cost}
therefore gives the same weighted scenario combination as the path-based cost
in \eqref{eq:path_ce_coupled}. Hence \eqref{eq:phi_equals_g} follows.

If $\pi_{od}=\mu_{od}$, then \eqref{eq:ra_truncated_logit_A} and
\eqref{eq:prob_kernel_B2} have the same deterministic cost and the same
reservation baseline for each OD pair. The admissible sets and normalized
probability weights are therefore identical.
\end{proof}

\begin{remark}[Fixed-$\alpha$ common-tail equivalence]
\label{rem:fixed_alpha_common_tail}
The regime dominance condition in \eqref{eq:regime_dominance} is sufficient but
not necessary. For a fixed $\alpha$, the equivalence only requires a common
CVaR tail selector.

Let $\chi_{k,s}^{A,od}(f)$ denote a valid CVaR tail selector for the path-level
CVaR term in \eqref{eq:path_ce_coupled}, and let $\chi_s^{B2}(f)$ denote the
selector appearing in the B2 subgradient representation
\eqref{eq:cvar_subgrad}. If, for every feasible $f$,
\begin{equation}
\label{eq:fixed_alpha_common_tail}
\chi_{k,s}^{A,od}(f)=\chi_s^{B2}(f),
\qquad
\forall (o,d),\ \forall k\in K_{od},\ \forall s=1,\ldots,S,
\end{equation}
then the same argument as in Proposition~\ref{prop:equiv_A_B2_regime} gives
$\phi_{k,\alpha,\lambda}^{od}(v)=g_k^{od}(f)$ for all OD pairs and paths.

Thus, a complete global ranking of all scenarios is not required at a fixed
$\alpha$. Internal orderings inside the non-tail block and inside the tail
block may differ across paths, provided that the induced CVaR tail selector in
\eqref{eq:fixed_alpha_common_tail} is common.
\end{remark}

\begin{corollary}[Choosing $\alpha$ at a regime boundary]
\label{cor:alpha_regime_boundary}
Suppose the scenario set can be partitioned into a non-tail block and a severe
regime block,
\[
\Xi=L_\alpha\cup H_\alpha,
\]
such that
\begin{equation}
\label{eq:regime_block_dominance}
t_a^\ell(x)\le t_a^h(x),
\qquad
\forall a\in A,\ \forall x\ge 0,\ 
\forall \ell\in L_\alpha,\ \forall h\in H_\alpha,
\end{equation}
while the ordering within $L_\alpha$ and within $H_\alpha$ may be arbitrary. If
\begin{equation}
\label{eq:alpha_boundary_mass}
\sum_{h\in H_\alpha}p_h=1-\alpha,
\end{equation}
then \eqref{eq:fixed_alpha_common_tail} holds. Hence Approaches~A and~B2 induce
the same generalized path costs.
\end{corollary}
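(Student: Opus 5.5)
The plan is to show that, under the block dominance \eqref{eq:regime_block_dominance} and the mass condition \eqref{eq:alpha_boundary_mass}, the indicator selector
\[
\chi_s:=\mathbf{1}\{s\in H_\alpha\},\qquad s=1,\ldots,S,
\]
is a valid CVaR tail selector for every path travel-time random variable $t_k^{od}(v,\xi)$. We will then show that the same selector is valid for the congestion potential $\widehat Z(f,\xi)$. Once one $\chi$ works for all these random variables at once, \eqref{eq:fixed_alpha_common_tail} holds. The conclusion then follows verbatim from Remark~\ref{rem:fixed_alpha_common_tail} and the argument of Proposition~\ref{prop:equiv_A_B2_regime}.

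\textbf{Step 1: ordering on the path and potential levels.} Fix a feasible $f$ with induced link flows $v$. Summing \eqref{eq:regime_block_dominance} over $a\in k$ gives $t_k^{od,\ell}(v)\le t_k^{od,h}(v)$ for all $\ell\in L_\alpha$ and $h\in H_\alpha$. Integrating the same inequality over $[0,x_a]$ and summing over links gives $\widehat Z^\ell(f)\le \widehat Z^h(f)$. So every relevant random variable $Y$ (each path time, and the potential) satisfies $\max_{\ell\in L_\alpha}Y^\ell\le \min_{h\in H_\alpha}Y^h$. The internal orderings within each block are irrelevant here.

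\textbf{Step 2: validity of the indicator selector.} For a discrete $Y$ with such a block ordering, we verify the RU/KKT optimality of \eqref{eq:cvar_ru_system}. Choose $\gamma^*\in[\max_{L_\alpha}Y^\ell,\min_{H_\alpha}Y^h]$ and set $u_s=(Y^s-\gamma^*)_+$. The indicator then satisfies $0\le\chi_s\le1$ and $\sum_s p_s\chi_s=\sum_{h\in H_\alpha}p_h=1-\alpha$ by \eqref{eq:alpha_boundary_mass}. It also satisfies complementarity: $\chi_s=1$ only where $Y^s\ge\gamma^*$, and $\chi_s=0$ where $Y^s\le\gamma^*$.

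Hence $\gamma^*$ attains the infimum. This holds because the subdifferential of $\gamma\mapsto\gamma+\frac{1}{1-\alpha}\mathbb{E}[(Y-\gamma)_+]$ at $\gamma^*$ contains $1-\frac{1}{1-\alpha}\sum_s p_s\chi_s=0$. The value obtained is
\[
\mathrm{CVaR}_\alpha(Y)=\frac{1}{1-\alpha}\sum_s p_s\chi_sY^s .
\]
For the path-level term this gives the representation used in \eqref{eq:path_ce_coupled}. For the potential-level term, the same $\chi$ is a valid multiplier in \eqref{eq:cvar_subgrad}.

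\textbf{Step 3: conclusion.} Since $\chi$ depends only on the block partition and not on $k$, $(o,d)$, or the potential, \eqref{eq:fixed_alpha_common_tail} holds with $\chi^{A,od}_{k,s}=\chi^{B2}_s=\mathbf{1}\{s\in H_\alpha\}$. Substituting into \eqref{eq:gen_path_cost} and using \eqref{eq:marginal_path_time}, $\tau_k^{od,s}=t_k^{od,s}$, gives $g_k^{od}(f)=\phi^{od}_{k,\alpha,\lambda}(v)$.

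\textbf{Main obstacle: ties across the block boundary.} If some $Y^\ell=Y^h$ with $\ell\in L_\alpha$ and $h\in H_\alpha$, the CVaR subdifferential is not a singleton. In that case the indicator is one valid selector among several. This is exactly the situation covered by the clause "chosen consistently at VaR ties" in Proposition~\ref{prop:equiv_A_B2_regime}.

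I would handle it by noting two facts. First, the CVaR value is unaffected by which valid selector is used. Second, the claim \eqref{eq:fixed_alpha_common_tail} only requires the existence of a common valid selector, and Step 2 exhibits one. I would therefore state explicitly that both selectors are fixed to the indicator of $H_\alpha$.
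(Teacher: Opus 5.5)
Your proposal is correct and follows the paper's own argument: block dominance lifts to every path travel time and to the congestion potential, the mass condition makes $\mathbf{1}\{s\in H_\alpha\}$ a valid tail selector common to all of them, and Remark~\ref{rem:fixed_alpha_common_tail} then finishes the proof. Your explicit Rockafellar--Uryasev subgradient check and tie discussion only spell out what the paper asserts in one line; one wording fix: state the complementarity condition as ``$\chi_s=0$ only where $Y^s\le\gamma^*$'', since an $h\in H_\alpha$ with $Y^h=\gamma^*$ still has $\chi_h=1$.
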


\begin{proof}{Proof.}
By \eqref{eq:regime_block_dominance}, every scenario in $H_\alpha$ is at least as severe as every scenario in $L_\alpha$ for every link; therefore, it is also true for every path and for the congestion potential. By \eqref{eq:alpha_boundary_mass}, the severe block has exactly the CVaR tail probability mass. Hence the same tail selector assigns full weight to scenarios in $H_\alpha$ and zero weight to scenarios in $L_\alpha$ for all path travel-time random variables and for the congestion potential. The condition in \eqref{eq:fixed_alpha_common_tail} follows, and the result follows from Remark~\ref{rem:fixed_alpha_common_tail}.
\end{proof}

Corollary~\ref{cor:alpha_regime_boundary} gives an operational interpretation
of $\alpha$. If a set of regimes is severe for the network as a whole, then
$\alpha$ can be chosen at the boundary between ordinary and severe regimes, so
that $1-\alpha$ equals the probability mass of the severe block. For example,
if two paths rank scenarios as
\[
a,b,c \mid d,e,f
\qquad\text{and}\qquad
b,c,a \mid e,f,d,
\]
where the vertical bar denotes the $\alpha$ cutoff, then the internal orderings
within $\{a,b,c\}$ and within $\{d,e,f\}$ do not matter. The expectation term is
order-free, and the CVaR term uses the same tail block $\{d,e,f\}$. In
contrast, if $\alpha$ cuts inside a heterogeneous severe block, different paths
may select different VaR-boundary or tail scenarios, so
\eqref{eq:fixed_alpha_common_tail} may fail and the equivalence between
Approaches~A and~B2 is no longer guaranteed.

Finally, when $\lambda=\alpha$, \eqref{eq:effective_weight} gives the
risk-neutral case. The CVaR selector no longer affects the generalized costs,
so the common-tail conditions above are unnecessary.

\section{$1$-Wasserstein distributionally robust optimization (TSUE-DRO)}
\label{sec:dro}

The certainty equivalent $\Phi_{\alpha,\lambda}$ in \eqref{eq:CE_generic} encodes behavioral risk sensitivity conditional on a prescribed probability law for $\xi$. In Approaches A and B1–B2, this law enters through the scenario probabilities used to compute $\mathbb{E}[\cdot]$ and $\mathrm{CVaR}_\alpha(\cdot)$. In hazard-prone networks, however, the law of $\xi$ is typically unknown and must be learned from limited, regime-mixed, and nonstationary data. Instability is often driven by distributional misspecification rather than by $(\alpha,\lambda)$.

This is especially evident under the normalized coupling \eqref{eq:effective_weight}. The risk premium in \eqref{eq:ce_minus_mean} scales with $\mathrm{CVaR}_\alpha(Y)-\mathbb{E}[Y]$, a tail-driven quantity that is difficult to estimate reliably and can shift under regime changes. Truncated-logit screening introduces threshold nonlinearity. When perceived route costs lie near feasibility cutoffs, small probability errors can flip admissibility and change the active choice set discontinuously. Thus, even if $\bar\lambda(\alpha,\lambda)$ varies smoothly, the induced equilibrium may still be fragile when applied to new observations not included in the dataset used to estimate the scenario probabilities (i.e., out-of-sample). To hedge this epistemic uncertainty, we replace the nominal law with an ambiguity set $\mathcal{D}$ and optimize the same normalized mean–CVaR objective against the worst $Q\in\mathcal{D}$. The Wasserstein radius $\rho$ controls distributional robustness, while $(\alpha,\lambda)$ control risk preference conditional on a given law. Wasserstein-ball DRO around an empirical measure produces finite-sample guarantees and strong dual reformulations \citep{gao_2022}.

\subsection{Robust mean-CVaR certainty equivalent over a Wasserstein ball}

Let $Z(f,\xi)$ denote the (scenario-dependent) TSUE system potential under uncertainty $\xi$.
We reformulate the normalized mean-CVaR certainty equivalent (Section~\ref{sec:meancvar}) as a robust optimization (RO) formulation over an
ambiguity set $\mathcal D$:
\begin{equation}
\Phi_{\alpha, \lambda}^{\mathrm{DRO}}(f)
=\sup _{Q \in \mathcal{D}}\left\{
\bigl(1-\bar\lambda(\alpha,\lambda)\bigr)\, \mathbb{E}_Q[Z(f, \xi)]
+\bar\lambda(\alpha,\lambda)\, \operatorname{CVaR}_{\alpha, Q}[Z(f, \xi)]
\right\}, \quad 0 \leq \lambda \leq \alpha<1,
\label{eq:dro}
\end{equation}

Let $P_0$ be the nominal (empirical) distribution of $\xi$. The $1$-Wasserstein distance is
$
W_1\left(Q, P_0\right)=\inf _{\pi \in \Pi\left(Q, P_0\right)}
\mathbb{E}_{\left(\xi, \xi_0\right) \sim \pi}\left[d\left(\xi, \xi_0\right)\right],
$
where $\Pi(Q,P_0)$ is the set of couplings and $d(\cdot,\cdot)$ is a ground metric. We adopt the Euclidean metric $d\left(\xi, \xi_0\right)=\left\|\xi-\xi_0\right\|_2$, which is standard in continuous transportation settings and yields sharp statistical rates \citep{panaretos_zemel_2019}. For $\rho > 0$, define the $1$-Wasserstein ambiguity set $\mathcal{D}=\left\{Q \mid W_1\left(Q, P_0\right) \leq \rho\right\}, \rho>0$. The TSUE-DRO problem is
\begin{equation}
\min _f \ \Phi_{\alpha, \lambda}^{\mathrm{DRO}}(f)
\quad \text{s.t. } \eqref{eq:demand_conservation} - \eqref{eq:nennegative}.
\label{DRO_Mean_CVAR}
\end{equation}

The radius $\rho$ has physical units measured in the same units as $\xi$ under the chosen ground metric. It can be interpreted as an average perturbation of inputs (e.g., under extreme weather $+1$ inch rainfall, $-10\%$ road capacity). Unlike $f$-divergence ambiguity sets, Wasserstein balls can shift probability mass to nearby but unobserved scenarios. This addresses support mismatch and enables a controlled stress test of adverse realizations. The ambiguity set itself does not require light-tailedness. However, the finite-sample radius selection rule in Section~\ref{subsec:finite_sample} assumes a light-tailed or compact-support $\widehat{P}$. When heavy tails are expected, $\rho$ can be selected via out-of-sample validation, robust resampling, or alternative concentration bounds tailored to heavy-tailed distributions.

\subsection{Connection to risk-averse TSUE (Approach B) and B1-DRO vs.\ B2-DRO}
\label{subsec:conn_b_dro}

We extend the potential-based risk-averse TSUE formulation (Approach~B in Section~\ref{sec:meancvar}) to distributional ambiguity. Recall that the scenario-dependent TSUE potential can be written as $Z(f,\xi)=\widehat Z(f,\xi)+\frac{1}{\theta}\Psi(f)$, where $\widehat Z(f,\xi)$ is the congestion potential (scenario-dependent) and $\Psi(f)$ is the entropy regularization (scenario-invariant). This gives rise to two natural DRO specifications. B1--DRO incorporates robustness in the random variable $Z(f,\xi)$ inside the risk functional. B2-DRO adds robustness to $\widehat Z(f,\xi)$ and keep the scenario-invariant entropy term $\tfrac{1}{\theta}\Psi(f)$ outside the risk functional.


\begin{proposition}
\label{prop:b1b2_dro_equiv}
Fix $\alpha\in(0,1)$ and $\lambda\le\alpha$, and let $\bar\lambda=\bar\lambda(\alpha,\lambda)$.
Let $Y(\xi)$ be any integrable random cost and let $b\in\mathbb R$ be a constant (i.e., $b$ does not
depend on $\xi$). Then, for any ambiguity set $\mathcal D$,
\begin{equation}
\sup_{Q\in\mathcal D}
\left\{
(1-\bar\lambda)\,\mathbb E_Q[Y+b]+\bar\lambda\,\operatorname{CVaR}_{\alpha,Q}(Y+b)
\right\}
=
\sup_{Q\in\mathcal D}
\left\{
(1-\bar\lambda)\,\mathbb E_Q[Y]+\bar\lambda\,\operatorname{CVaR}_{\alpha,Q}(Y)
\right\}
+b.
\label{eq:shift_invariance_sup}
\end{equation}
Consequently, with $c(f):=\tfrac{1}{\theta}\Psi(f)$ and $Y(\xi)=\widehat Z(f,\xi)$, we have for every
feasible $f$,
\begin{equation}
\sup_{Q\in\mathcal D}
\left\{
(1-\bar\lambda)\,\mathbb E_Q[Z(f,\xi)]+\bar\lambda\,\operatorname{CVaR}_{\alpha,Q}(Z(f,\xi))
\right\}
=
\sup_{Q\in\mathcal D}
\left\{
(1-\bar\lambda)\,\mathbb E_Q[\widehat Z(f,\xi)]+\bar\lambda\,\operatorname{CVaR}_{\alpha,Q}(\widehat Z(f,\xi))
\right\}
+\frac{1}{\theta}\Psi(f),
\label{eq:b1_b2_equiv_pointwise}
\end{equation}
hence, B1--DRO and B2--DRO define identical optimization problems and admit the same optimal solutions.
\end{proposition}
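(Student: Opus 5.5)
The argument is pointwise in $Q$: establish translation equivariance of the inner objective for each fixed law, then lift it through the supremum. Fix $Q\in\mathcal D$. Linearity of expectation gives $\mathbb E_Q[Y+b]=\mathbb E_Q[Y]+b$, and integrability of $Y$ makes both sides finite.

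For the CVaR term, use the Rockafellar--Uryasev representation \eqref{eq: cvar} under $Q$:
\[
\operatorname{CVaR}_{\alpha,Q}(Y+b)=\inf_{\gamma\in\mathbb R}\Bigl\{\gamma+\tfrac{1}{1-\alpha}\mathbb E_Q\bigl[(Y+b-\gamma)_+\bigr]\Bigr\}.
\]
Substitute $\gamma=\gamma'+b$. This is a bijection of $\mathbb R$, so the infimum over $\gamma$ equals the infimum over $\gamma'$, and the expression becomes
\[
\inf_{\gamma'}\Bigl\{\gamma'+\tfrac{1}{1-\alpha}\mathbb E_Q[(Y-\gamma')_+]\Bigr\}+b=\operatorname{CVaR}_{\alpha,Q}(Y)+b.
\]
The weights $(1-\bar\lambda)$ and $\bar\lambda$ sum to one. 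Hence the inner objective satisfies $G_Q(Y+b)=G_Q(Y)+b$ for every $Q$, where $G_Q(Y):=(1-\bar\lambda)\mathbb E_Q[Y]+\bar\lambda\operatorname{CVaR}_{\alpha,Q}(Y)$. This holds for any real $\bar\lambda$, although the paper's domain gives $\bar\lambda\in[0,1]$.

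Next, take the supremum over $Q\in\mathcal D$. Since $\sup_Q\{G_Q(Y)+b\}=\sup_Q G_Q(Y)+b$ for a constant $b$, including the case where the supremum is $+\infty$, this yields \eqref{eq:shift_invariance_sup}.

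For \eqref{eq:b1_b2_equiv_pointwise}, fix a feasible $f$ and set $Y(\xi)=\widehat Z(f,\xi)$ and $b=c(f)=\tfrac1\theta\Psi(f)$. The quantity $b$ depends only on $f$ and not on $\xi$, so it is a constant with respect to every $Q$. By \eqref{eq:Zs_total}, $Z(f,\xi)=Y(\xi)+b$, and the first part applies directly. The left side of \eqref{eq:b1_b2_equiv_pointwise} is the B1--DRO objective, and the right side is the B2--DRO objective, which is \eqref{eq:ra_tsue_B2} with the sup inserted. The two objectives therefore coincide as functions of $f$ on the common feasible set \eqref{eq:demand_conservation}--\eqref{eq:nennegative}. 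Identical objective functions on identical feasible sets give identical optimal values and identical sets of minimizers.

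The main obstacle is justifying that $\Psi(f)$ is truly $\xi$-invariant under every $Q$ in the Wasserstein ball. This holds because $Q$ perturbs only the law of $\xi$, while $f$ is a decision fixed before uncertainty is evaluated. A second point to check is that the RU representation is valid for each $Q\in\mathcal D$. Integrability of $Y$ under every $Q$ is needed, so I would note that $\mathcal D$ is taken to contain only laws under which $\widehat Z(f,\cdot)$ is integrable. This holds for the 1-Wasserstein ball whenever $\widehat Z$ has at most linear growth in $\xi$. If integrability fails, both sides are $+\infty$ and the identity still holds in the extended reals.
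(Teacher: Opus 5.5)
Your proof is correct and follows essentially the same route as the paper: you use linearity of $\mathbb E_Q$ and translation equivariance of $\operatorname{CVaR}_{\alpha,Q}$ via the Rockafellar--Uryasev change of variables, note that the weights $(1-\bar\lambda)$ and $\bar\lambda$ sum to one, pull the constant $b$ through $\sup_{Q\in\mathcal D}$, and then instantiate with $Y=\widehat Z(f,\cdot)$ and $b=\Psi(f)/\theta$. Your remarks on integrability under every $Q\in\mathcal D$ are careful refinements that the paper leaves implicit.
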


\begin{proof}{Proof.}
The proof can be found in Appendix~\ref{app: prop9}.
\end{proof}

The identity above uses that $b$ is constant with respect to $\xi$ (for fixed $f$). In general, $\operatorname{CVaR}_\alpha(Y+U)\neq \operatorname{CVaR}_\alpha(Y)+\operatorname{CVaR}_\alpha(U)$ when $U$ is random. Here, $\Psi(f)/\theta$ is scenario-invariant and therefore constitutes a constant shift in $\xi$. Accordingly, we present the DRO development for a generic $Z(f,\xi)$. When emphasizing the behavioral interpretation of $\theta$ as the dispersion parameter of the truncated-logit kernel, we prefer the B2-DRO viewpoint (only robust congestion and keep $\Psi(f)/\theta$ outside the risk functional), noting that it is exactly equivalent to B1-DRO under the normalized \((\alpha,\lambda)\)-coupled certainty equivalent. When emphasizing the behavioral interpretation of $\theta$ as the dispersion parameter of the truncated-logit kernel, we present results using the B2-DRO form.

\subsection{Finite-sample guarantee}
\label{subsec:finite_sample}

Suppose we draw $N$ independent samples $\xi^1, \ldots, \xi^N$ from the unknown true distribution $\widehat{P}$. The empirical distribution is
$
P_0=\frac{1}{N} \sum_{i=1}^N \delta_{\xi^i}.
$

\begin{assumption}\label{ass:iid}
The random vectors $\xi^{1},\ldots,\xi^{N}\in\mathbb{R}^{m}$ are independent and identically
distributed according to an unknown distribution $\widehat P$.
Moreover, $\widehat P$ is light-tailed: there exists an exponent $\nu>1$ such that $A \;:=\; \mathbb E_{\widehat P}\!\Big[\exp\!\big(\|\xi\|_2^{\nu}\big)\Big] \;<\;\infty$.
As a special case, this assumption holds trivially if the support $\Xi$ of $\xi$ is compact.
\end{assumption}

\begin{proposition}[Measure concentration, Theorem 3.4 in \cite{mohajerin_esfahani_kuhn_2017}]
\label{thm:measure_concentration}
If Assumption~\ref{ass:iid} holds, then for every $N\ge 1$, $\rho>0$, and $m\neq 2$,
\begin{equation}\label{eq:mc_bound}
\text{Pr}^{N}\!\left\{
   W_{1}\!\bigl(\widehat P,P_{0}\bigr)\,\ge\,\rho
\right\}
\;\le\;
\begin{cases}
  c_{1}\exp\!\bigl(-c_{2}N\,\rho^{\max\{m,2\}}\bigr),
  & 0<\rho\le 1,\\[4pt]
  c_{1}\exp\!\bigl(-c_{2}N\,\rho^{\nu}\bigr),
  & \rho>1,
\end{cases}
\end{equation}
where $c_{1},c_{2}>0$ depend only on $(m,\nu,A)$.
(A similar but slightly more complicated inequality also holds for the special case $m=2$; see
\cite{mohajerin_esfahani_kuhn_2017}.)
\end{proposition}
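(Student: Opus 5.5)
The plan is to obtain the statement as the case $p=1$ of the non-asymptotic concentration inequality of Fournier and Guillin for empirical measures in the Wasserstein distance $W_p$. This is the route taken in \cite{mohajerin_esfahani_kuhn_2017}; the work lies in checking that our Assumption~\ref{ass:iid} matches their hypotheses and in reading off the exponents.

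\emph{Step 1: match the moment hypothesis.} Fournier and Guillin require an exponential moment $\mathbb E[\exp(\gamma\|\xi\|^{a})]<\infty$ for some $\gamma>0$ and some exponent $a>p$. With $p=1$, $\gamma=1$ and $a=\nu>1$, this is exactly the finiteness of $A$ in Assumption~\ref{ass:iid}. If the support $\Xi$ is compact, then $\|\xi\|_2$ is bounded, so $A<\infty$ for every $\nu$, which gives the special case stated in the assumption.

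\emph{Step 2: apply their theorem.} For i.i.d.\ samples, they bound $\Pr^N\{W_p^p(\widehat P,P_0)\ge x\}$ by the sum of two terms. The first is $a(N,x)\mathbf 1_{\{x\le 1\}}$. The second is $b(N,x)=C\exp(-cNx^{a/p})\mathbf 1_{\{x>1\}}$. The constants $C,c$ depend only on $(m,p,a,\gamma,A)$. With $p=1$ we have $W_1^1=W_1$, so we may take $x=\rho$ directly.

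\emph{Step 3: split by the regime of $\rho$.}
\begin{itemize}
\item For $\rho>1$, only the $b$-term survives, giving $C\exp(-cN\rho^{\nu})$. This is the second line of \eqref{eq:mc_bound}.
\item For $0<\rho\le 1$, the form of $a(N,\rho)$ depends on how $p$ compares with $m/2$.
  \begin{itemize}
  \item If $m=1$, then $p=1>m/2$, so $a(N,\rho)=C\exp(-cN\rho^{2})$.
  \item If $m\ge 3$, then $p=1<m/2$, so $a(N,\rho)=C\exp(-cN\rho^{m/p})=C\exp(-cN\rho^{m})$.
  \end{itemize}
  In both cases the exponent is $\rho^{\max\{m,2\}}$, which is the first line of \eqref{eq:mc_bound}.
\item The borderline case $p=m/2$, i.e.\ $m=2$, is where Fournier and Guillin have an extra factor $\exp\!\big(-cN(\rho/\log(2+1/\rho))^2\big)$. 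This is exactly why the statement excludes $m=2$ and only refers to a more complicated inequality there.
\end{itemize}
Setting $c_1:=C$ and $c_2:=c$ then gives \eqref{eq:mc_bound}. These constants depend only on $(m,\nu,A)$, because $p=1$ and $\gamma=1$ are fixed.

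The main obstacle is bookkeeping rather than new mathematics. First, the indicator structure of the $a$ and $b$ terms must be quoted in the exact form in which Fournier and Guillin state it when $a>p$. Second, the constants must not acquire hidden dependence on $N$ or $\rho$. If some version of the theorem carries a $b$-term that also contributes for $x\le1$, it has rate $\rho^{\nu}$. I would absorb it into the first line using $\rho^{\nu}\ge\rho^{\max\{m,2\}}$, which holds when $\rho\le1$ and $\nu\le\max\{m,2\}$, and otherwise by enlarging $c_1$ and shrinking $c_2$. Since the statement is cited verbatim as Theorem~3.4 of \cite{mohajerin_esfahani_kuhn_2017}, the proof can rest on that reference once the hypotheses in Step 1 have been verified.
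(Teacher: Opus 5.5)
Your proposal is correct and follows essentially the same route as the paper. The paper gives no independent proof; it invokes Theorem~3.4 of \cite{mohajerin_esfahani_kuhn_2017}, which is exactly Fournier and Guillin's Theorem~2 specialized to $p=1$, $\gamma=1$ and exponent $\nu>1$, with the rate read off by comparing $p$ to $m/2$ as you do. Your closing fallback is unnecessary, since under the exponential-moment hypothesis the $b$-term carries $\mathbf 1_{\{x>1\}}$. It would also fail when $\nu>\max\{m,2\}$: then $\exp(-cN\rho^{\nu})$ cannot be dominated by $c_1\exp(-c_2N\rho^{\max\{m,2\}})$ uniformly over small $\rho$ and large $N$, whatever constants you choose.
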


Proposition~\ref{thm:measure_concentration} allows us to choose a radius that contains $\widehat P$
with confidence $1-\delta$. Equating the right-hand side of \eqref{eq:mc_bound} to $\delta$ and
solving for $\rho$ yields the calibration
\begin{equation}\label{eq:rho_S_delta}
\rho_{N,\delta}:=
\rho_{N}(\delta)=
\begin{cases}
  \left( \dfrac{\log(c_{1}\delta^{-1})}{c_{2}N} \right)^{1/\max\{m,2\}},
  & N \;\ge\; \dfrac{\log(c_{1}\delta^{-1})}{c_{2}},\\[12pt]
  \left( \dfrac{\log(c_{1}\delta^{-1})}{c_{2}N} \right)^{1/\nu},
  & N \;<\; \dfrac{\log(c_{1}\delta^{-1})}{c_{2}}.
\end{cases}
\end{equation}

Hence, with probability at least $1-\delta$,
\(
W_{1}\!\bigl(\widehat P,P_{0}\bigr)\le \rho_{N,\delta},
\)
so the Wasserstein ball of radius $\rho_{N,\delta}$ around $P_{0}$ is a $(1-\delta)$-confidence set for $\widehat P$. One may set $\rho=\rho_{N,\delta}$ for a desired confidence level, tune $\rho$ via out-of-sample validation,
or select it based on policy tolerances (e.g., robustness to $\pm 1$ inch average rainfall). The Wasserstein radius trades off robustness and conservatism.

\subsection{Dual reformulation via CVaR epigraph and Wasserstein duality}
\label{subsec:dro_dual}

Let $P_0=\frac{1}{N}\sum_{i=1}^N\delta_{\xi^i}$ be the empirical distribution and
$\mathcal D:=\{Q:W_1(Q,P_0)\le \rho\}$ the $1$-Wasserstein ambiguity set (with ground metric
$\|\cdot\|_2$). For fixed $f$ and $(\alpha,\lambda)$ with $\lambda\le \alpha$, define the hinge-augmented
payoff
\begin{equation}
g_t(f,\xi)
:=
\bigl(1-\bar\lambda(\alpha,\lambda)\bigr)\,Z(f,\xi)
+\frac{\bar\lambda(\alpha,\lambda)}{1-\alpha}\,\bigl(Z(f,\xi)-t\bigr)_{+},
\qquad t\in\mathbb R,
\label{eq:gt_def}
\end{equation}
where $\bar\lambda(\alpha,\lambda)$ is given in \eqref{eq:effective_weight}.

\begin{assumption}\label{ass:dro_reg}
Let $(\Xi,\|\cdot\|_2)$ be a closed subset of $\mathbb R^m$.
For every feasible $f$ and every $t\in\mathbb R$, the map $\xi\mapsto Z(f,\xi)$ is measurable, upper semicontinuous, and has at most linear growth on $\Xi$, ensuring that all expectations under distributions in $\mathcal D$ are well-defined.
\end{assumption}

\begin{proposition}
\label{prop:dro_dual_bound}
Under Assumption~\ref{ass:dro_reg}, for all feasible $f$,
\begin{equation}
\Phi_{\alpha,\lambda}^{\mathrm{DRO}}(f)
\;\le\;
\inf_{\substack{t\in\mathbb R\\ \kappa\ge 0}}
\left\{
\bar\lambda(\alpha,\lambda)\,t
+\kappa\rho
+\frac{1}{N}\sum_{i=1}^N
\sup_{\xi\in\Xi}\Big(g_t(f,\xi)-\kappa\|\xi-\xi^{i}\|_2\Big)
\right\}.
\label{eq:dro_dual_ub}
\end{equation}
Moreover, if the minimax interchange $\sup_{Q\in\mathcal D}\inf_t=\inf_t\sup_{Q\in\mathcal D}$ holds for the epigraph form, and $\xi\mapsto g_t(f,\xi)$ satisfies the piecewise-concave condition of \citep[Assumption~4.1]{mohajerin_esfahani_kuhn_2017}
(e.g., $g_t(f,\cdot)$ is piecewise affine on a convex closed $\Xi$), then the inequality in \eqref{eq:dro_dual_ub} is tight and becomes an equality. In that case, introducing epigraph variables $s_i\in\mathbb R$ yields the exact semi-infinite convex reformulation
\begin{subequations}
\label{eq:dro_sip}
\begin{align}
\min_{f,t,\kappa,\{s_i\}}~~
& \bar\lambda(\alpha,\lambda)\,t + \kappa\rho + \frac{1}{N}\sum_{i=1}^N s_i
\\
\text{s.t.}~~
& s_i \ \ge\ g_t(f,\xi) - \kappa\|\xi-\xi^i\|_2, \quad \kappa\ge 0, \quad \text{\eqref{eq:demand_conservation}--\eqref{eq:nennegative}}, \quad t\in\mathbb R,
\quad \forall \xi\in\Xi,\ \ i=1,\ldots,N,
\end{align}
\end{subequations}
\end{proposition}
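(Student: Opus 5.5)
The plan is to prove the upper bound \eqref{eq:dro_dual_ub} by a weak-duality chain in three steps, and then obtain tightness by invoking the strong duality theorem of \cite{mohajerin_esfahani_kuhn_2017}.

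\textbf{Step 1: epigraph form.} Write the certainty equivalent in RU epigraph form. For any fixed $Q\in\mathcal D$, use the representation \eqref{eq: cvar} and the definition \eqref{eq:gt_def}. Since $\bar\lambda\ge 0$, this gives
\begin{equation*}
(1-\bar\lambda)\mathbb E_Q[Z]+\bar\lambda\,\mathrm{CVaR}_{\alpha,Q}(Z)=\inf_{t\in\mathbb R}\bigl\{\bar\lambda t+\mathbb E_Q[g_t(f,\xi)]\bigr\}.
\end{equation*}
Assumption~\ref{ass:dro_reg} (linear growth, and the finite first moment implied by $W_1(Q,P_0)\le\rho$) ensures all expectations are finite.

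\textbf{Step 2: max--min inequality.} Taking the supremum over $Q$ and using $\sup_Q\inf_t\le\inf_t\sup_Q$ yields
\begin{equation*}
\Phi^{\mathrm{DRO}}_{\alpha,\lambda}(f)\le\inf_t\Bigl\{\bar\lambda t+\sup_{Q\in\mathcal D}\mathbb E_Q[g_t(f,\xi)]\Bigr\}.
\end{equation*}

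\textbf{Step 3: Lagrangian bound on the inner supremum.} Fix $t$ and bound $\sup_{Q\in\mathcal D}\mathbb E_Q[g_t]$ by Lagrangian weak duality for the Wasserstein constraint. Take any $Q\in\mathcal D$ and any coupling $\pi$ of $Q$ and $P_0$ with $\mathbb E_\pi\|\xi-\xi_0\|_2\le\rho+\epsilon$. Disintegrate $\pi$ into conditional laws $Q_i$ given $\xi_0=\xi^i$, each with weight $1/N$. Then for any $\kappa\ge0$,
\begin{equation*}
\mathbb E_Q[g_t]\le\kappa(\rho+\epsilon)+\frac1N\sum_i\mathbb E_{Q_i}\bigl[g_t(f,\xi)-\kappa\|\xi-\xi^i\|_2\bigr]\le\kappa(\rho+\epsilon)+\frac1N\sum_i\sup_{\xi\in\Xi}\bigl(g_t(f,\xi)-\kappa\|\xi-\xi^i\|_2\bigr).
\end{equation*}
Letting $\epsilon\downarrow0$, taking the infimum over $\kappa$, and then the infimum over $t$ gives \eqref{eq:dro_dual_ub}. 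This direction needs no structure beyond measurability and integrability; I will only verify that the suprema may be $+\infty$ without breaking the inequality.

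\textbf{Tightness.} Under the hypothesized minimax interchange, the inequality in Step 2 is an equality. For each fixed $t$, the inner supremum then equals the dual value by \citep[Theorem~4.2]{mohajerin_esfahani_kuhn_2017}. That theorem applies because $g_t(f,\cdot)$ is a maximum of finitely many functions of $Z(f,\cdot)$; when $Z$ is affine or piecewise affine in $\xi$, $g_t$ is a pointwise maximum of affine pieces, matching their Assumption~4.1. This step is the main obstacle: I must check that the hinge composition preserves the required structure, with the nonnegative coefficients $1-\bar\lambda$ and $\bar\lambda/(1-\alpha)$ keeping $g_t$ a max of the admissible pieces. I must also check that the growth condition gives a finite dual.

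\textbf{Semi-infinite reformulation.} Combining the infima over $(t,\kappa)$ with the outer minimization over $f$, replace each inner supremum by an epigraph variable $s_i$ with the constraints $s_i\ge g_t(f,\xi)-\kappa\|\xi-\xi^i\|_2$ for all $\xi\in\Xi$. This yields \eqref{eq:dro_sip}. Convexity in $(f,t,\kappa,s)$ follows because for fixed $\xi$ the right-hand side is convex in $(f,t)$: $Z$ is convex in $f$, and the hinge is a nondecreasing convex function with nonnegative weight. The right-hand side is also linear in $\kappa$, and a supremum over $\xi$ of such constraints preserves convexity.
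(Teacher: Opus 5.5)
Your proposal is correct and follows the paper's proof essentially step for step: the RU epigraph form, the max--min inequality, a Wasserstein dual bound for each fixed $t$, tightness from the assumed interchange together with \citep[Theorem~4.2]{mohajerin_esfahani_kuhn_2017}, and epigraph variables $s_i$ for the semi-infinite program. The one difference is that you prove the weak-duality step yourself, by disintegrating a near-optimal coupling over the atoms $\xi^i$, whereas the paper cites Theorem~4.2 for it; this makes the first inequality hold under Assumption~\ref{ass:dro_reg} alone, without the piecewise-concavity hypothesis (their Assumption~4.1) under which that theorem is stated.
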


\begin{proof}{Proof.}
The proof can be found in Appendix~\ref{app:dro_dual_bound}.
\end{proof}

\subsection{Tractable reformulations}
\label{subsec:dro_tractable}

The tractability of \eqref{eq:dro_sip} depends on the ability to solve or represent the inner maximizations over $\Xi$. We outline two standard cases.

\subsubsection{Case 1: Discrete/finite support $\Xi$}
If the support is restricted to a finite scenario set $\Xi=\{\zeta^1,\ldots,\zeta^M\}$ (e.g., a discretized hazard library), then the semi-infinite constraints in \eqref{eq:dro_sip} reduce to finitely many linear (in $s_i$) constraints:
$
s_i \ \ge\ g_t(f,\zeta^j)-\kappa\|\zeta^j-\xi^i\|_2,\quad \forall i=1,\ldots,N,\ \forall j=1,\ldots,M.
$
The resulting program is a finite-dimensional convex optimization problem that can be solved by standard solvers; its size scales with $NM$.

\subsubsection{Case 2: Affine dependence on $\xi$ and exact second-order conic programming reduction}

A closed-form conic reduction is available when $Z(f,\xi)$ is affine in $\xi$ (or when it is approximated by a convex piecewise-affine surrogate in $\xi$).

\begin{assumption}\label{ass:affine_xi}
Assume that for each feasible $f$, $Z(f,\xi)$ is affine in $\xi$ on $\Xi=\mathbb R^m$:
\(
Z(f,\xi)=Z_0(f)+h(f)^\top\xi,
\)
where $Z_0(f)\in\mathbb R$ and $h(f)\in\mathbb R^m$. Moreover, $h(f)$ is affine in $f$ so that the constraints $\|h(f)\|_2\le\cdot$ are second-order conic programming (SOCP) representable.
\end{assumption}

Under Assumption~\ref{ass:affine_xi}, $g_t(f,\xi)$ in \eqref{eq:gt_def} is the maximum of two affine
functions in $\xi$:
\begin{equation}
g_t(f,\xi)=\max\{a_0(f)^\top\xi+b_0(f),\ a_1(f)^\top\xi+b_1(f,t)\},
\end{equation}
with $ a_0(f)=(1-\bar\lambda)\,h(f),\quad b_0(f)=(1-\bar\lambda)\,Z_0(f), \quad a_1(f)=\Bigl(1-\bar\lambda+\frac{\bar\lambda}{1-\alpha}\Bigr)h(f),\quad
b_1(f,t)=\Bigl(1-\bar\lambda+\frac{\bar\lambda}{1-\alpha}\Bigr)Z_0(f)-\frac{\bar\lambda}{1-\alpha}t.$ Using the identity
\begin{equation}
\sup_{\xi\in\mathbb R^m}\Bigl(a^\top\xi+b-\kappa\|\xi-\xi^i\|_2\Bigr)
=
\begin{cases}
a^\top\xi^i+b,& \|a\|_2\le \kappa,\\
+\infty,& \text{otherwise},
\end{cases}
\end{equation}
each inner supremum in \eqref{DRO_Mean_CVAR} is finite iff $\kappa\ge\|a_0(f)\|_2$ and
$\kappa\ge\|a_1(f)\|_2$, and in that case it equals $\max\{a_0(f)^\top\xi^i+b_0(f),\,a_1(f)^\top\xi^i+b_1(f,t)\}$.
Thus, \eqref{eq:dro_sip} is equivalent to the following exact SOCP:
\begin{subequations}
\label{eq:socp_dro_exact}
\begin{align}
\min_{f,t,\kappa,\{s_i\}}~~
& \bar\lambda(\alpha,\lambda)\,t+\kappa\rho+\frac{1}{N}\sum_{i=1}^N s_i
\\
\mathrm{s.t.}~~
& s_i \ge a_0(f)^\top \xi^i + b_0(f), \qquad s_i \ge a_1(f)^\top \xi^i + b_1(f,t),\qquad i=1,\ldots,N,
\\
& \|a_0(f)\|_2 \le \kappa,\qquad \|a_1(f)\|_2 \le \kappa, \qquad  \text{\eqref{eq:demand_conservation}--\eqref{eq:nennegative}}.
\end{align}
\end{subequations}


Assumption~\ref{ass:affine_xi} holds exactly when the uncertainty enters the objective through additive or multiplicative perturbations that appear linearly in $\xi$ (e.g., link-level incident-delay add-ons or speed-reduction multipliers), so that for fixed $f$ one can write
$Z(f,\xi)=Z_0(f)+h(f)^\top\xi$. It is also standard when $Z(f,\xi)$ is generated by a hazard simulator and is replaced by a convex piecewise-affine surrogate in $\xi$ (e.g., a max-of-affine fit) \citep{gao_kleywegt_2022}, which preserves convexity and enables a closed-form conic reduction. If $h(f)$ is affine in $f$, the resulting norm constraints are SOCP-representable and the reformulation remains convex; otherwise the reduction may not preserve convexity and we revert to the cutting-plane method below.

\subsection{Benders exchange method for the exact semi-infinite program}
\label{sec:benders}

When the SOCP reduction \eqref{eq:socp_dro_exact} is unavailable (e.g., nonlinear dependence of
$Z(f,\xi)$ on $\xi$ and/or a compact $\Xi$), we solve the exact semi-infinite convex program \eqref{eq:dro_sip} using a cutting-plane method that iteratively adds violated constraints.


At iteration $r$, we maintain finite subsets $\mathcal X_i^{(r)}\subseteq \Xi$ and solve the restricted
main problem obtained by imposing $s_i \ \ge\ g_t(f,\xi)-\kappa\|\xi-\xi^i\|_2, \forall \xi\in\mathcal X_i^{(r)},\ i=1,\ldots,N.$ Let $(f^{(r)},t^{(r)},\kappa^{(r)},s^{(r)})$ be an optimal solution of the restricted main problem. Then, for each $i$, we solve the separation problem
\begin{equation}
\Delta_i^{(r)} :=
\sup_{\xi\in\Xi}\ \Big(g_{t^{(r)}}(f^{(r)},\xi)-\kappa^{(r)}\|\xi-\xi^i\|_2\Big) - s_i^{(r)}.
\label{eq:separation_subproblem}
\end{equation}

If $\Delta_i^{(r)} > 0$, then we let $\xi_i^\star$ be a maximizer and add the violated cut of the form $s_i \ \ge\ g_t(f,\xi_i^\star)-\kappa\|\xi_i^\star-\xi^i\|_2 $ to obtain $\mathcal X_i^{(r+1)}=\mathcal X_i^{(r)}\cup\{\xi_i^\star\}$. If $\Delta_i^{(r)}\le 0$ for all $i$, then the current solution is feasible for \eqref{eq:dro_sip} and, hence, optimal for \eqref{DRO_Mean_CVAR}. The pseudocode can be found in the Appendix~\ref{app:pseudocode_dro}.

If $\Xi$ is compact and $Z(f,\xi)$ is continuous in $\xi$, then each supremum in \eqref{eq:separation_subproblem} can be attained. In practice, the separation can be solved by enumeration (if $\Xi$ is discretized), by global optimization, or by search procedures when $Z(f,\xi)$ is produced by a hazard simulator.

In large-scale instances, the baseline exchange scheme can be dominated by the cost of the separation step and and the growth of the main problem. We therefore employ standard accelerations that preserve correctness up to prescribed tolerances, like shared slack variables in the main problem, partial or parallel separation with periodic full audits, Lipschitz screening to skip unnecessary separations, and stabilization and cut management. For completeness, these practices are summarized in Appendix~\ref{app:benders_speedups}.

\subsection{Piecewise stationary (regime-dependent) TSUE - DRO via a structured ambiguity set}
\label{sec:dro_nonstationary}

Model \eqref{DRO_Mean_CVAR} assumes a stationary law for $\xi$. In hazard-prone networks, uncertainty is often piecewise stationary, with regime shifts (e.g., normal rain (NR), heavy rain (HR), flooding (FL)). Pooling samples into a single empirical law $P_0$ yields a mixture that can downweight rare-but-severe flood tails and misattribute structural shifts to sampling noise.

To separate within-regime sampling error from between-regime shifts, we add robustness to regime-conditional laws while keeping regime frequencies fixed. Let the regime indicator $W$ take values in $\mathcal{W}_F:=\{\mathrm{NR},\mathrm{HR},\mathrm{FL}\}$ with fixed probabilities $p_w:=\mathbb{P}(W=w)$. For each $w\in\mathcal{W}_F$, let $\{\xi^{w,1},\ldots,\xi^{w,N_w}\}$ be samples observed under $W=w$ and define the regime-wise empirical law $P_0^{w}:=\frac{1}{N_w}\sum_{i=1}^{N_w}\delta_{\xi^{w,i}}$. We use regime-specific $1$-Wasserstein balls: $\mathcal{D}^{w}:=\left\{Q^{w}\ \middle|\ W_1(Q^{w},P_0^{w})\le \rho_w\right\}$.

Applying Proposition~\ref{thm:measure_concentration} within each regime yields $(1-\delta_w)$-confidence radii $\rho_{N_w,\delta_w}$. By a union bound, if $\sum_{w}\delta_w\le\delta$, then with probability at least $1-\delta$ we have $W_1(\widehat{P}^{w},P_0^{w})\le \rho_{N_w,\delta_w}$ for all $w$. Following the conditional/prescriptive DRO viewpoint (e.g., as in \citealp{adrián_esteban-pérez_morales_2021}), we define a structured ambiguity set over the joint
distribution of $(W,\xi)$ that (i) fixes regime frequencies and (ii) robustifies regime-conditional
laws:
\begin{equation}
\mathcal{D}_{\mathrm{str}}
:=
\left\{
Q\ \middle|\ 
Q(W=w)=p_w\ \ \forall w\in\mathcal{W}_F,\ \text{and}\ 
Q(\xi\mid W=w)=Q^{w}\in\mathcal{D}^{w}\ \ \forall w\in\mathcal{W}_F
\right\}.
\label{eq:struct_amb_set}
\end{equation}
Fixing $\{p_w\}$ separates regime occurrence from within-regime severity. This is appropriate when $\{p_w\}$ come from climatology or external forecasts. The structured ambiguity set $\mathcal{D}_{\mathrm{str}}$ is a Cartesian product over regimes for fixed $\{p_w\}$, which yields separability.

\begin{proposition}
\label{prop:regime_decomp}
Let $h:\Xi\to\mathbb R$ be measurable and independent of $W$. Then
\begin{equation}
\sup_{Q\in\mathcal D_{\mathrm{str}}}\ \mathbb E_Q[h(\xi)]
=
\sum_{w\in\mathcal W_F} p_w\ \sup_{Q^{w}\in\mathcal D^{w}}\ \mathbb E_{Q^{w}}[h(\xi)].
\label{eq:lem_decomp}
\end{equation}
\end{proposition}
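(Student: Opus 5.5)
The plan is to show both inequalities by exploiting the product structure of $\mathcal D_{\mathrm{str}}$ in \eqref{eq:struct_amb_set}. Throughout I would read $\mathbb E_{Q^w}[h(\xi)]$ as taking values in $(-\infty,+\infty]$, so that suprema may be $+\infty$; this avoids any finiteness caveats. The basic identity is the tower property. For any $Q\in\mathcal D_{\mathrm{str}}$, the marginal of $W$ is $(p_w)_w$ and the conditional law of $\xi$ given $W=w$ is some $Q^w\in\mathcal D^w$. Since $h$ depends only on $\xi$ and not on $W$, this gives
\[
\mathbb E_Q[h(\xi)]=\sum_{w\in\mathcal W_F}p_w\,\mathbb E_Q[h(\xi)\mid W=w]=\sum_{w\in\mathcal W_F}p_w\,\mathbb E_{Q^w}[h(\xi)].
\]
The key point is that the regime frequencies are fixed, so they factor out of the supremum.

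\emph{Upper bound.} Bound each summand by $\sup_{Q^w\in\mathcal D^w}\mathbb E_{Q^w}[h]$. This uses $p_w\ge 0$. Taking the supremum over $Q\in\mathcal D_{\mathrm{str}}$ on the left then gives ``$\le$''.

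\emph{Lower bound.} Here I use that $\mathcal D_{\mathrm{str}}$ is a Cartesian product: any tuple $(Q^w)_{w\in\mathcal W_F}$ with $Q^w\in\mathcal D^w$ defines a joint law
\[
Q:=\sum_w p_w\,\delta_w\otimes Q^w,
\]
and this $Q$ belongs to $\mathcal D_{\mathrm{str}}$. When some $p_w=0$, the conditional law is not uniquely determined, but any choice in $\mathcal D^w$ is admissible and those terms contribute nothing anyway. Given $\epsilon>0$, choose $\epsilon$-optimal $Q^w$ for each of the finitely many regimes. In the case of an infinite supremum, choose $Q^w$ with $\mathbb E_{Q^w}[h]$ exceeding any prescribed $M$ instead. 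Then $\mathbb E_Q[h]\ge\sum_w p_w\sup\mathbb E_{Q^w}[h]-\epsilon$. Letting $\epsilon\to0$, or $M\to\infty$, gives ``$\ge$''.

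\emph{Main obstacle.} The step that needs care is the measure-theoretic well-definedness rather than the inequalities themselves. Two things must hold:
\begin{itemize}
\item[(i)] Conditional laws given $W=w$ must be well defined. This is immediate, because $W$ is discrete: $Q(\cdot\mid W=w)=Q(\cdot,W=w)/p_w$ when $p_w>0$.
\item[(ii)] The expectations must make sense. Under Assumption~\ref{ass:dro_reg}-type growth conditions, or with the extended-real convention, $\mathbb E_{Q^w}[h]$ is well defined. I would also avoid forms like $\infty-\infty$ by requiring $h$ to be bounded below or quasi-integrable on each $\mathcal D^w$.
\end{itemize}
With these points handled, the argument above is complete.
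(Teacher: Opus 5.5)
Your proposal is correct and follows the same route as the paper. You use the law of total expectation with the fixed regime weights $p_w$ to write $\mathbb E_Q[h(\xi)]=\sum_w p_w\,\mathbb E_{Q^w}[h(\xi)]$, then use the Cartesian-product structure of $\mathcal D_{\mathrm{str}}$ to take the supremum regime by regime; your explicit two-inequality argument ($\epsilon$-optimal selection over finitely many regimes, handling of $p_w=0$ and infinite suprema) makes rigorous what the paper compresses into ``maximizing each regime-wise term separately.''
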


\begin{proof}{Proof.}
The proof can be found in Appendix~\ref{app:regime_de}.
\end{proof}

\subsubsection{Scenario A: mixture-tail risk under $\mathcal D_{\mathrm{str}}$}
\label{sec:dro_nonstationary_A}

Scenario A models mixture-tail risk: tail events are defined with respect to the overall mixture distribution of $\xi$ induced by $W$ with fixed weights $\{p_w\}$. The decision variable is a single flow vector $f$ (one equilibrium traffic assignment), and we add robustness to the mixture-law risk functional:
\begin{equation}
\min_{f}\ 
\sup_{Q\in\mathcal D_{\mathrm{str}}}
\left\{
\bigl(1-\bar\lambda(\alpha,\lambda)\bigr)\mathbb E_Q[Z(f,\xi)]
+\bar\lambda(\alpha,\lambda)\operatorname{CVaR}_{\alpha,Q}\!\left(Z(f,\xi)\right)
\right\}
\quad\text{s.t. }\eqref{eq:demand_conservation}-\eqref{eq:nennegative}.
\label{eq:DRO_struct_problem}
\end{equation}

Using the CVaR epigraph and the definition \eqref{eq:gt_def} of $g_t(f,\xi)$, the inner robust objective is:
\begin{equation}
\sup_{Q\in\mathcal D_{\mathrm{str}}}
\left\{
(1-\bar\lambda)\mathbb E_Q[Z(f,\xi)]+\bar\lambda\,\operatorname{CVaR}_{\alpha,Q}(Z(f,\xi))
\right\}
=
\inf_{t\in\mathbb R}\left\{\bar\lambda\,t+\sup_{Q\in\mathcal D_{\mathrm{str}}}\mathbb E_Q[g_t(f,\xi)]\right\}.
\label{eq:struct_epigraph_A}
\end{equation}

Because $g_t(f,\xi)$ does not depend explicitly on $W$ and the regime frequencies are fixed, proposition
\ref{prop:regime_decomp} implies the decomposition
$
\sup_{Q\in\mathcal D_{\mathrm{str}}}\mathbb E_Q[g_t(f,\xi)]
=
\sum_{w\in\mathcal W_F} p_w\ 
\sup_{Q^{w}\in\mathcal D^{w}}
\mathbb E_{Q^{w}}[g_t(f,\xi)].
$
For each regime $w$, the regime-wise worst-case expectation admits the same Wasserstein strong dual as in \eqref{eq:dro_sip} (with $P_0^w$ and $\rho_w$):
\begin{equation}
\sup_{Q^{w}\in\mathcal D^{w}}\ \mathbb E_{Q^{w}}[g_t(f,\xi)]
=
\inf_{\kappa_w\ge 0}
\left\{
\kappa_w\rho_w
+
\frac{1}{N_w}\sum_{i=1}^{N_w}
\sup_{\xi\in\Xi_w}\Big(g_t(f,\xi)-\kappa_w\|\xi-\xi^{w,i}\|_2\Big)
\right\},
\label{eq:regime_dual}
\end{equation}
where $\Xi_w$ denotes the support of $\xi$ under regime $w$ (one can take $\Xi_w\equiv\Xi$). Combining \eqref{eq:struct_epigraph_A}--\eqref{eq:regime_dual} yields the following exact dual reformulation of Scenario A:
\begin{equation}
\label{eq:scenarioA_exact_dual}
\min_{f}\ 
\inf_{t\in\mathbb R}\ 
\left\{
\bar\lambda\,t
+
\sum_{w\in\mathcal W_F} p_w\ 
\inf_{\kappa_w\ge 0}
\left[
\kappa_w\rho_w
+
\frac{1}{N_w}\sum_{i=1}^{N_w}
\sup_{\xi\in\Xi_w}\Big(g_t(f,\xi)-\kappa_w\|\xi-\xi^{w,i}\|_2\Big)
\right]
\right\}
\quad\text{s.t. }\eqref{eq:demand_conservation}-\eqref{eq:nennegative}.
\end{equation}

Introducing epigraph variables $s_{w,i}$ for the inner suprema produces a semi-infinite convex program with constraints $s_{w,i}\ \ge\ g_t(f,\xi)-\kappa_w\|\xi-\xi^{w,i}\|_2, \forall \xi\in\Xi_w$, weighted in the objective by $p_w/N_w$. This program can be solved using the same cutting-plane method as in Section~\ref{sec:benders}, applied regime-by-regime. Scenario A uses a single VaR threshold $t$ under the mixture distribution. When flood regimes are rare, the worst $(1-\alpha)$ tail may be dominated by flood scenarios. For very high $\alpha$, the tail may exclude some regimes entirely. This is appropriate when the traveler evaluates the system's risk over the unconditional day-to-day distribution.

\subsubsection{Scenario B: regime-conditional tail risk (informed travelers)}
\label{sec:dro_nonstationary_B}

Scenario B models regime-conditional tail risk: travelers observe the regime $W=w$ prior to route choice (like the traveler check the weather report), and evaluate tail risk within the conditional law $Q(\xi\mid W=w)$. Accordingly, we allow regime-specific flows $\{f^{w}\}$ (a TSUE equilibrium traffic assignment for each regime), and define regime-wise CVaR thresholds $\{t_w\}$. A regime-aware DRO formulation is
\begin{equation}
\begin{aligned}
\min_{\{f^{w}\}_{w\in\mathcal W_F}}~~
&\sum_{w\in\mathcal W_F} p_w\
\sup_{Q^{w}\in\mathcal D^{w}}
\left\{
\bigl(1-\bar\lambda(\alpha,\lambda)\bigr)\mathbb E_{Q^{w}}\!\big[Z(f^{w},\xi)\big]
+\bar\lambda(\alpha,\lambda)\operatorname{CVaR}_{\alpha,Q^{w}}\!\big(Z(f^{w},\xi)\big)
\right\}
\\
\text{s.t.}~~
& f^{w}\ \text{satisfies \eqref{eq:demand_conservation}--\eqref{eq:nennegative} for each } w\in\mathcal W_F.
\end{aligned}
\label{eq:scenarioB_main}
\end{equation}

Using the CVaR epigraph within each regime, define $g_{t_w}(f^{w},\xi):=
(1-\bar\lambda)\,Z(f^{w},\xi)+\frac{\bar\lambda}{1-\alpha}\,(Z(f^{w},\xi)-t_w)_+$.
Then the objective in \eqref{eq:scenarioB_main} is equivalent to
\begin{equation}
\sum_{w\in\mathcal W_F} p_w\
\inf_{t_w\in\mathbb R}
\left\{
\bar\lambda\,t_w
+
\sup_{Q^{w}\in\mathcal D^{w}}\mathbb E_{Q^{w}}\!\left[g_{t_w}(f^{w},\xi)\right]
\right\}.
\label{eq:scenarioB_epigraph}
\end{equation}
Each regime term in \eqref{eq:scenarioB_epigraph} admits the exact Wasserstein strong dual representation:
\begin{equation}
\sup_{Q^{w}\in\mathcal D^{w}}\ \mathbb E_{Q^{w}}[g_{t_w}(f^{w},\xi)]
=
\inf_{\kappa_w\ge 0}
\left\{
\kappa_w\rho_w
+
\frac{1}{N_w}\sum_{i=1}^{N_w}
\sup_{\xi\in\Xi_w}\Big(g_{t_w}(f^{w},\xi)-\kappa_w\|\xi-\xi^{w,i}\|_2\Big)
\right\}.
\end{equation}
Thus, Scenario B is decomposed into regime-wise TSUE-DRO subproblems weighted by $\{p_w\}$, coupled only through the exogenous regime weights. In particular, if no additional across regimes coupling constraints are imposed (i.e., no linking decisions/constraints across regimes, such as a common capacity-expansion vector, shared budgets, or nonparticipation constraints), each regime subproblem can be solved independently (and in parallel), after which the outer weighted sum aggregates expected performance across regimes. Scenario B is appropriate when travelers condition on $W$ (e.g., forecasted flood status) and choose routes accordingly. Tail risk is evaluated conditional on $W=w$ under $Q^{w}=Q(\xi\mid W=w)$. Each regime therefore has its own VaR/CVaR threshold $t_w$. No cross-regime ordering is required, and overlapping regime supports introduce no inconsistency.


\section{Numerical experiments}
\label{sec:sample_example_full}

We consider a small instance with one OD pair on a two-node network with three parallel links $i\in\{1,2,3\}$ (Fig.~\ref{fig:toy_example}) and fixed trip rate $q=10$.
Travel times are regime-dependent with three weather states
$W\in\{\text{NR},\text{HR},\text{FL}\}$, that occur with probabilities $0.90$, $0.07$, and $0.03$, respectively. For each link $i$ and regime $w$, the travel time is linear in its flow $v_i$.

\begin{figure}[htbp]
  \centering
  \includegraphics[width= 0.8\textwidth]{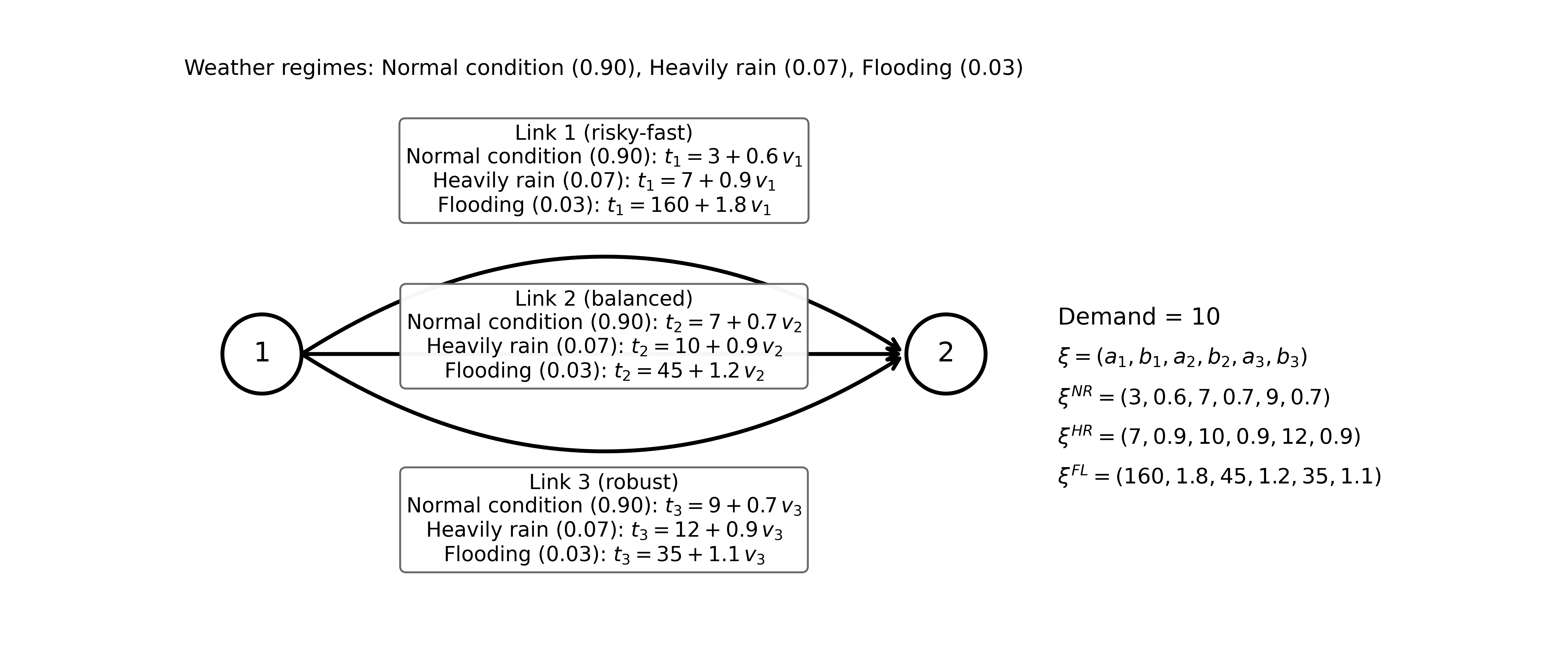}
  \caption{The figure illustrates the network topology, with three routes from node 1 to node 2, under different weather conditions. Normal condition (NR) occurs with 90\% probability, heavy rain (HR) with 7\% probability, and flooding (FL) with 3\% probability.}
  \label{fig:toy_example}
\end{figure}

The example is designed to induce risk-sensitive rerouting: link~1 is the best under NR/HR but the worst under flooding, while link~3 is unattractive normally but best under flooding. Hence, increasing tail sensitivity should shift traffic flow away from link~1 and toward links~2-3. We use a truncated-logit model with $\theta=0.15$ and travel-time budget $\pi=60$; choice probabilities follow \eqref{eq:truncate_kernal}, and routes with $\phi_k\ge\pi$ are truncated. 

\subsection{Expectation-only TSUE, CVaR-only, mean-CVaR}

\begin{figure}[htbp]
  \centering
  \includegraphics[width= 0.65\textwidth]{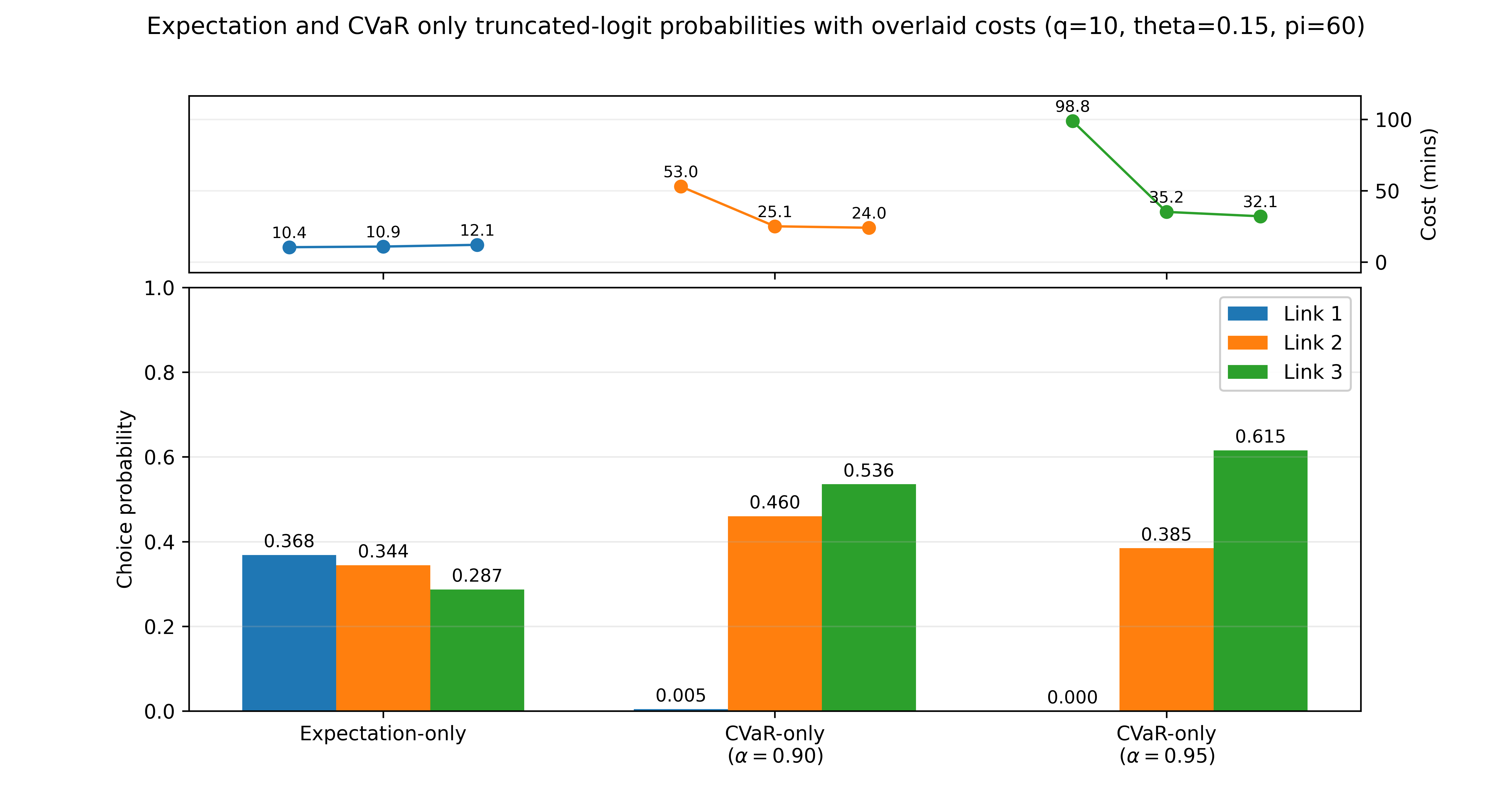}
  \caption{Comparison of expectation only and CVaR only on the three-path network. Each column reports the equilibrium path/link choice probabilities (bottom bars' panel) and the corresponding deterministic costs used for screening/weighting (top markers' panel) across $(\alpha,)\in\{0.90,0.95\}$.}
  \label{fig:expectation_cvar_only}
\end{figure}

Figure~\ref{fig:expectation_cvar_only} compares expectation, $\phi_k=\mathbb{E}[t_k]$, and CVaR-only, $\phi_k=\mathrm{CVaR}_\alpha(t_k)$ with $\alpha\in\{0.90,0.95\}$. Under expectation, link~1 has the lowest cost and therefore the highest probability. Under CVaR-only with $\alpha=0.90$, link~1 becomes substantially more expensive, and link~3 is identified as the best route. At $\alpha=0.95$, link~1's cost (98.8) exceeds $\pi=60$ and is truncated; link~3 is chosen more often than link~2 (by about 23\%).

\begin{figure}[htbp]
  \centering
  \includegraphics[width= 0.65\textwidth]{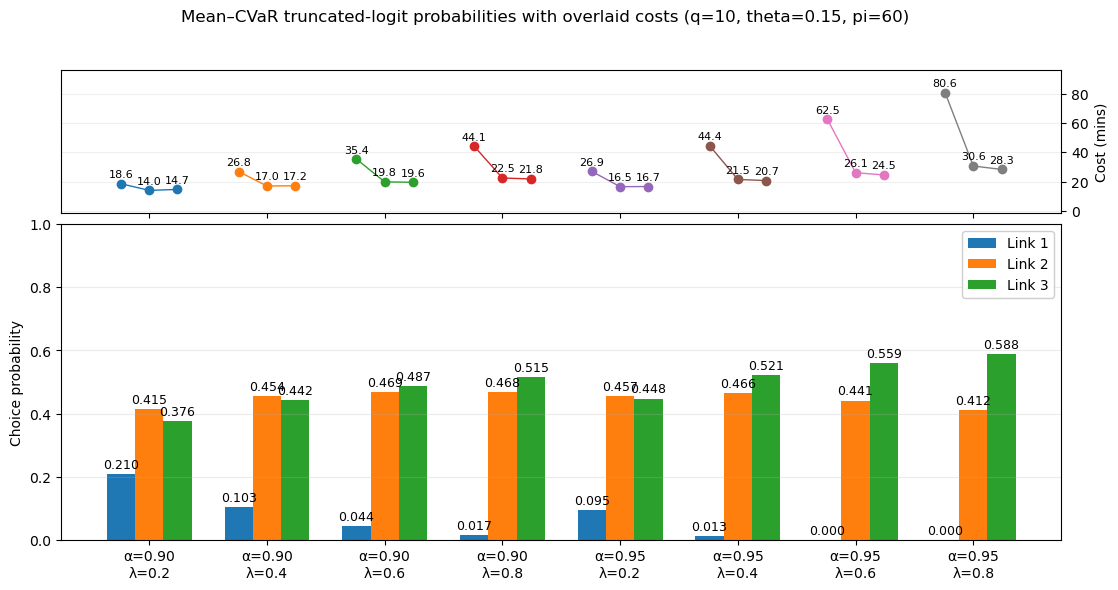}
  \caption{Comparison of mean-CVaR on the three-path network. Parameters $(\alpha,\lambda)\in\{0.90,0.95\}\times\{0.2,0.4,0.6,0.8\}$.}
  \label{fig:mean_CVaR_only}
\end{figure}

Figure~\ref{fig:mean_CVaR_only} reports mean-CVaR outcomes. For $\alpha=0.90$, increasing $\lambda$ raises link~1's cost and reduces its choice probability. Link~1 falls from 21\% at $\lambda = 0.2$ to 1.7\% at $\lambda=0.8$. Link~3 surpasses link~2 when $\lambda\ge 0.6$. For $\alpha=0.95$, the shift is stronger. The cost of Link 1 increases from 26.9 to 80.6 and is truncated at $\lambda=0.6$. Link~3 becomes dominant starting at $\lambda=0.4$. Thus, path choice probability rankings change with $\lambda$. At $\alpha = 0.90$, rankings shift from $P_1 < P_3 < P_2$ to $P_1 \ll P_2 < P_3$. At $\alpha = 0.95$, rankings change from $P_1 < P_3 \lesssim P_2$ to $P_1 \ll P_2 < P_3$.

\subsection{TSUE-SP with path-based (Approach~A) and potential-based (Approach~B)}

Figure~\ref{fig:approach_A_toy} reports the TSUE's fixed point under Approach~A. The direction of the $\lambda$ effect under the normalized coupling is opposite to that of the non normalized mean-CVaR mixture in Section~\ref{sec:mean_cvar}.  Here, increasing $\lambda$ reduces the effective tail weight $\bar\lambda(\alpha,\lambda)$ (cf.\ \eqref{eq:effective_weight} and Proposition~\ref{prop:3}). This reduction reduces the tail-risk premium. As a result, the share of the tail-exposed alternative (link~1) increases slightly while the overall ranking is preserved.

\begin{figure}[htbp]
  \centering
  \includegraphics[width= 0.7\textwidth]{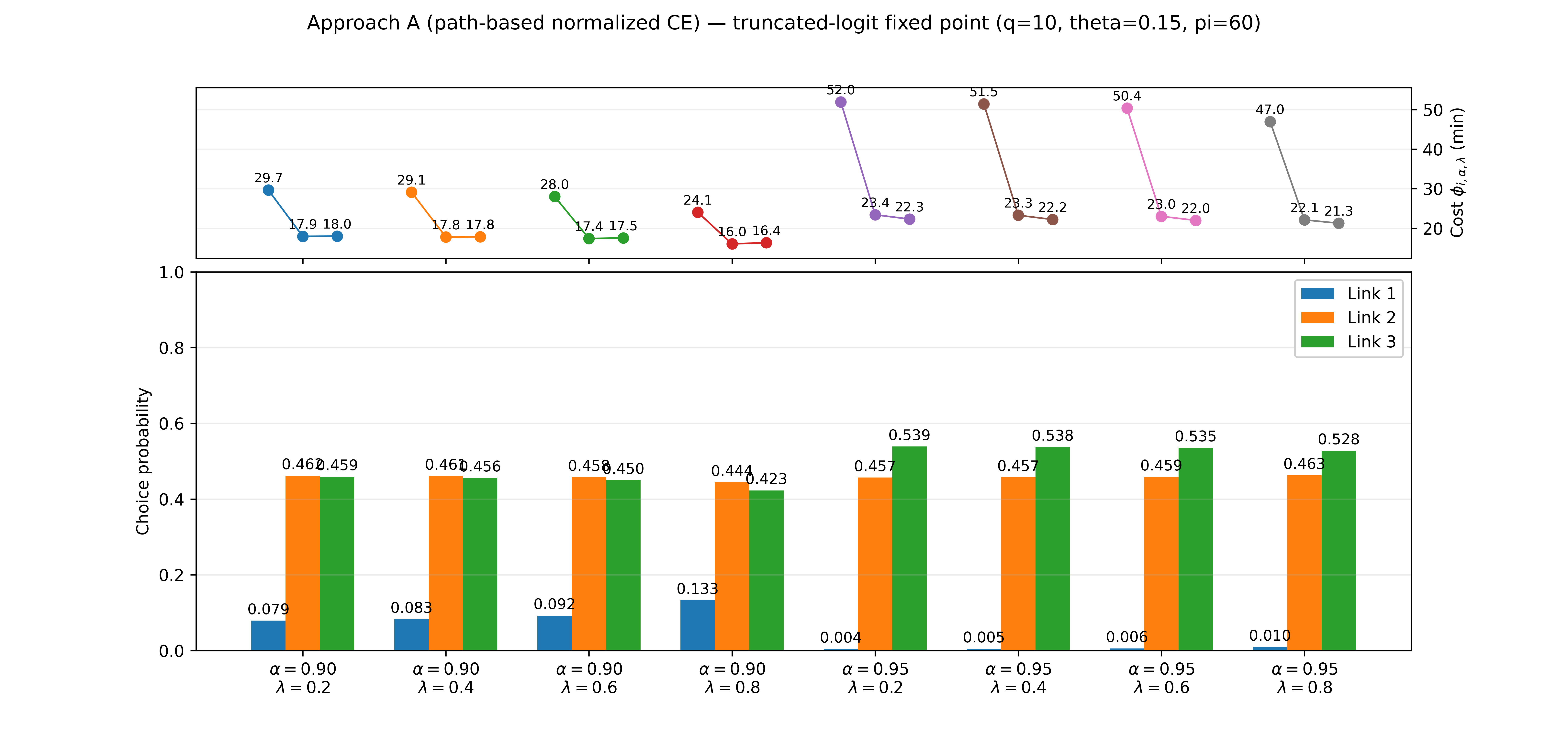}
  \caption{Comparison of risk-averse TSUE formulations under Approach~A on the three-path network.}
  \label{fig:approach_A_toy}
\end{figure}

Throughout the grid, all perceived costs remain below the truncation threshold ($\pi=60$). The truncation operator does not lead to any alternative's removal. Only the relative weights change. The order of perceived costs is invariant in $\lambda$ for each fixed $\alpha$. For $\alpha=0.90$, we have $\phi_2 \lesssim \phi_3 \ll \phi_1$ for all tested $\lambda$. For $\alpha=0.95$, we have $\phi_3 < \phi_2 \ll \phi_1$ for all tested $\lambda$. At $\alpha=0.90$, we observe $P_1 \ll P_3 \lesssim P_2$. At $\alpha=0.95$, we observe $P_1 \ll P_2 < P_3$. Unlike the previous test, no probability choice ranking reversals occur when $\lambda$ varies at fixed $\alpha$. Compared to Figure~\ref{fig:mean_CVaR_only}, the choice probabilities exhibit smaller variation across different $\lambda$ values. The greatest change occurs in link~1 when $\alpha = 0.90$, dropping from 13.3\% to 7.9\%. Links~2 and 3 show minimal variation. When $\alpha = 0.95$, the largest change occurs in link~3, increasing from 52.8\% to 53.9\%.
\begin{figure}[htbp]
  \centering
  \includegraphics[width= 0.7\textwidth]{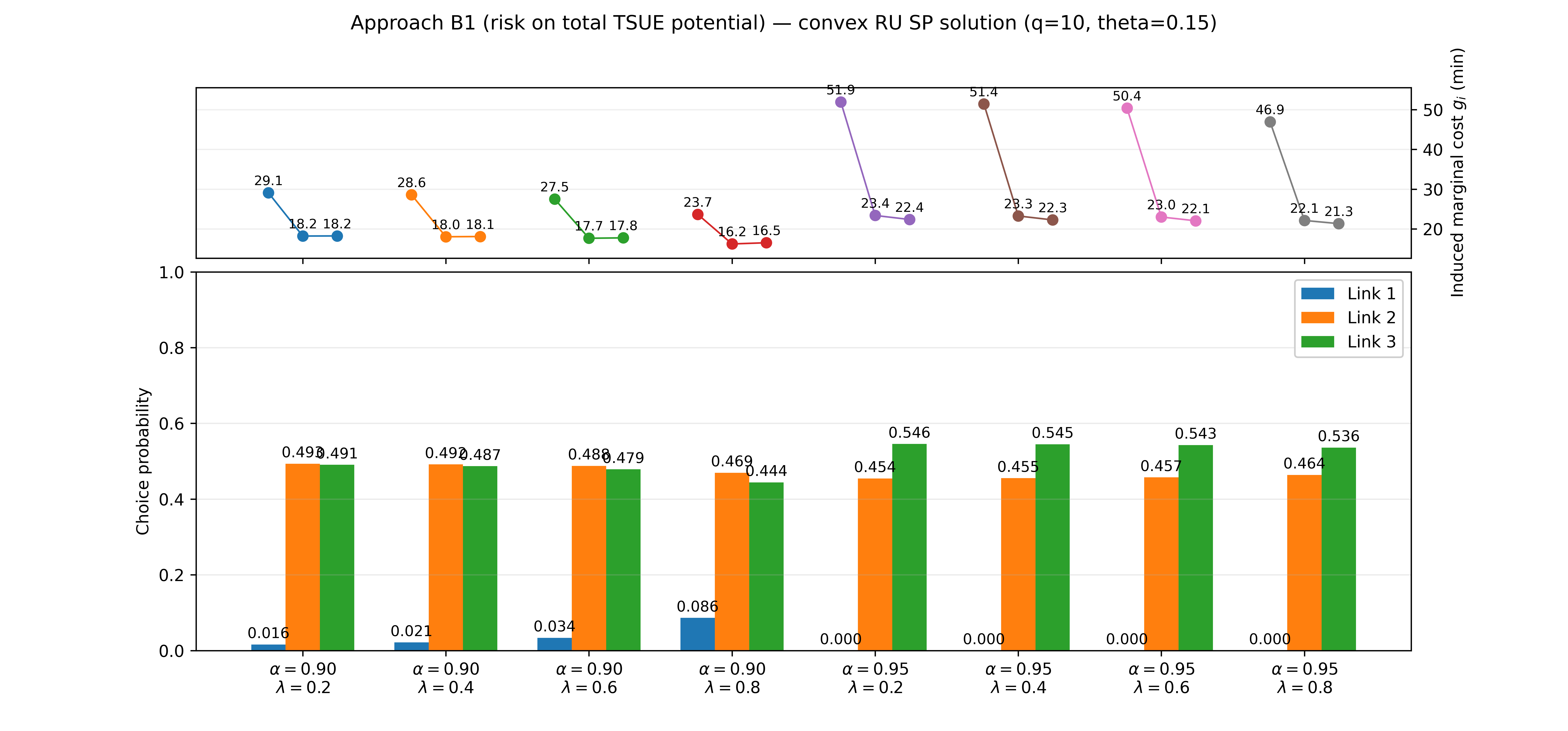}
  \caption{Comparison of risk-averse TSUE formulations under Approach~B1 on the three-path network.}
  \label{fig:approach_B1_toy}
\end{figure}

Figure~\ref{fig:approach_B1_toy} reports the potential-based formulation (Approach~B1, solved as the convex program in \eqref{eq:slcp_B1}). The results closely match Approach~A. The cost calculation difference is below 0.6 on link 1, and the difference for paths 2 and 3 cost is approximately 0.1 to 0.2 among Figure~\ref{fig:approach_A_toy} and Figure~\ref{fig:approach_B1_toy}. When $\alpha = 0.9$, we observe same $P_1 \ll P_3 \lesssim P_2$. When $\alpha = 0.95$, we observe $P_1 \ll P_2 < P_3$. The results are stable. The probability ranking remains consistent for each $\alpha$. The dependence on $\lambda$ remains weak over the tested range. The three routes are disjoint route alternatives (i.e., $k=i$). The path-based perception in Approach~A is reduced to a link certainty equivalent. The KKT marginal costs in Approach~B1 act in the same link-separable way. Since B2 and B1 produce the same figure, we will not repeat the results of B2 here.

\subsection{$1$-Wasserstein TSUE-DRO}

For the stationary DRO case, we observe $\|\xi^{HR} - \xi^{FL}\|_2 \approx 163$ and $\|\xi^{NR} - \xi^{FL}\|_2 \approx 159$. We use 160 as an approximation between these two values. Under 1-Wasserstein distance, moving probability mass of approximately $\delta$ from NR/HR to FL requires a radius of roughly $\rho \approx \delta \times 160$. Setting $\delta \approx 2\%$, thus $\rho = 3.2$, is a reasonable number to demonstrate the difference.

\begin{figure}[htbp]
  \centering
  \includegraphics[width= 0.7\textwidth]{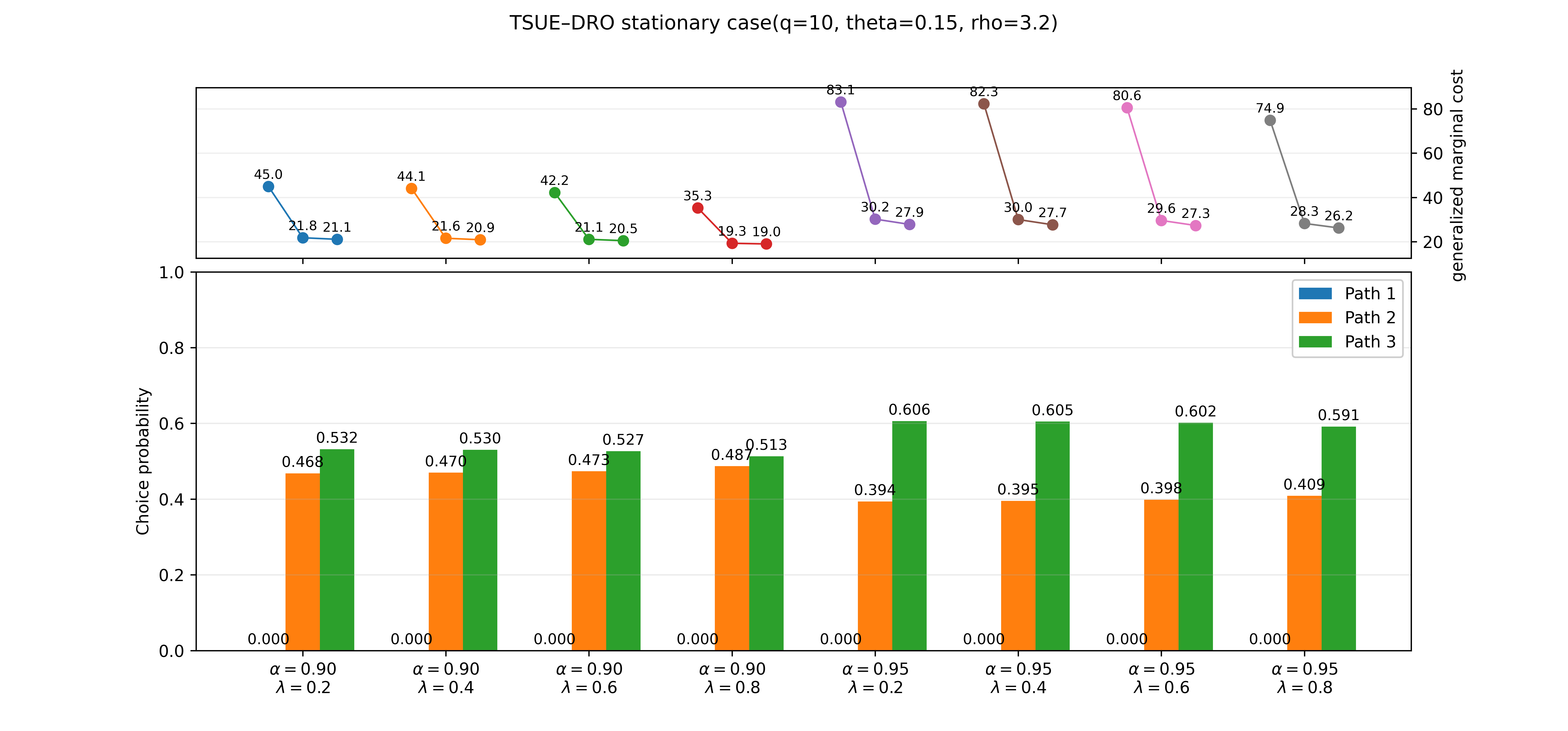}
  \caption{Comparison of 1-Wasserstein TSUE-DRO stationary formulations on the three-path network.}
  \label{fig:b2_dro_stationary}
\end{figure}

Figure~\ref{fig:b2_dro_stationary} reports the stationary TSUE–DRO solution. Relative to the TSUE–SP baseline, the DRO formulation increases generalized marginal costs, with the strongest effect on Path~1. When $\alpha=0.95$, Path~1's cost rises to approximately 75–83, while Paths~2 and 3 remain around 28.3–30.2 and 26.2–27.9. Under Wasserstein ambiguity, the worst-case distribution places additional weight on adverse regimes. This makes the $\mathrm{CVaR}_{0.95}$ component more dominated by flooding and penalizes Path~1 the most. The figure~\ref{fig:b2_dro_stationary} shows the path $P_1=0$ across all tested $(\alpha,\lambda)$ pairs. The remaining demand splits as $P_3>P_2$ across all tested $(\alpha,\lambda)$ pairs with slight sensitivity to $\lambda$. For $\alpha=0.90$, $P_2\in[0.468,0.487]$ and $P_3\in[0.513,0.532]$. For $\alpha=0.95$, $P_2\in[0.394,0.409]$ and $P_3\in[0.591,0.606]$.

In the base example, $t_i^w(v_i)=a_i^w+b_i^w v_i$ for each link $i$ and regime $w\in \{NR, HR, FL\}$. To obtain a piecewise stationary instance, we add perturbations within each state and construct empirical distributions of $\xi$ conditional on $W=w$. The procedure is described in Appendix~\ref{app:build_non_stationary_example}.

\begin{figure}[htbp]
  \centering
  \includegraphics[width= 0.7\textwidth]{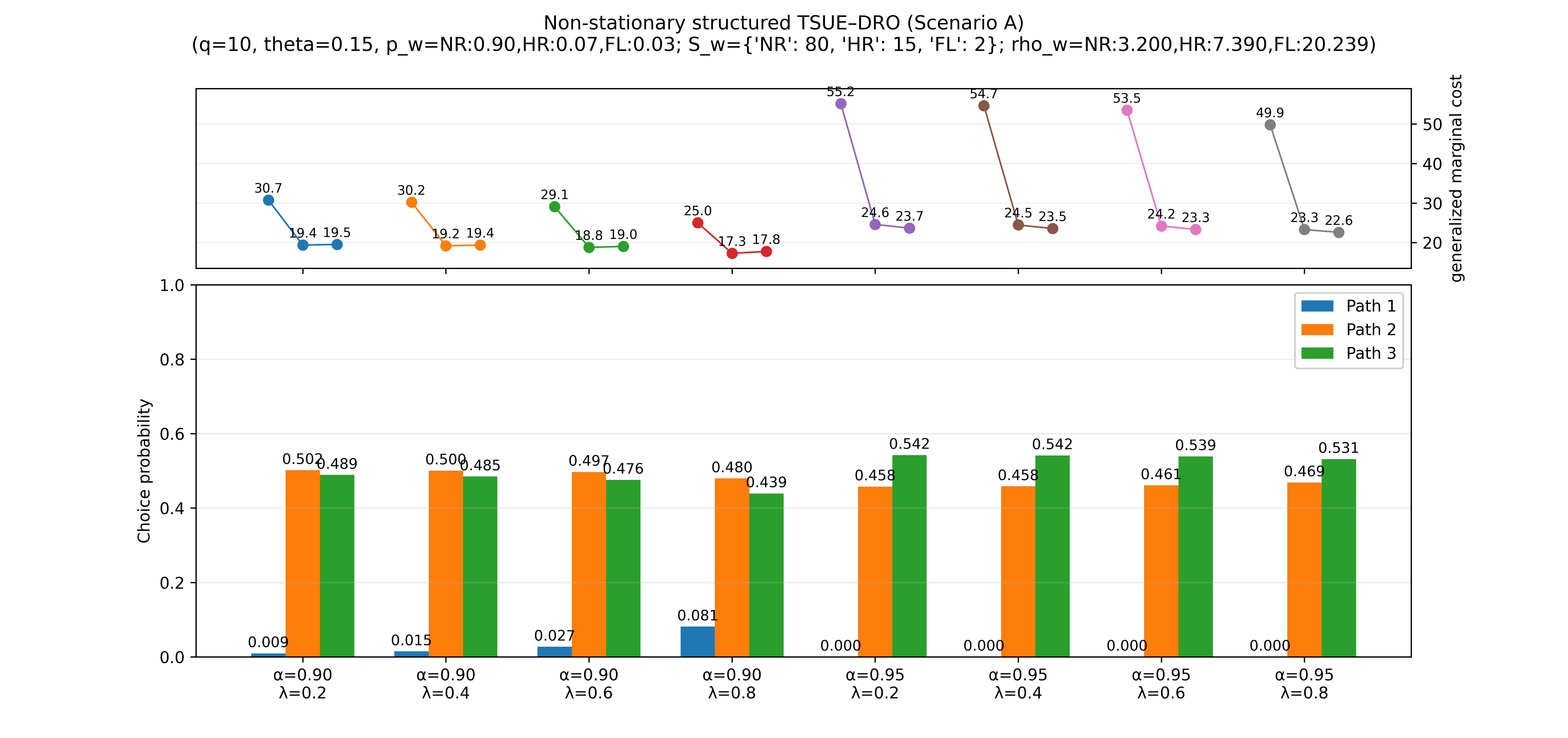}
  \caption{Comparison of 1-Wasserstein piecewise stationary TSUE–DRO formulations (scenario A) on the three-path network.}
  \label{fig:dro_nonstationary_A}
\end{figure}

Figure~\ref{fig:dro_nonstationary_A} shows results similar to the TSUE-SP case in Figure~\ref{fig:approach_B1_toy}. This similarity arises because our initial nonstationary DRO fixes regime weights $p_w$ and incorporates robustness only in the conditional laws $P(\xi\mid w)$ within each regime. Under this restriction, the CVaR tail is largely determined by the mixture weights and the confidence level $\alpha$. Within-regime ambiguity has limited leverage on which realizations enter the worst $(1-\alpha)$ tail. In the simple example with $p_{\mathrm{HR}}+p_{\mathrm{FL}}=0.10$ and $\alpha=0.90$, the tail mass exactly matches the combined hazard regimes' probability. The tail becomes effectively HR+FL regardless of perturbations within regimes. Consequently, risk-adjusted costs and equilibrium flows closely track the TSUE-SP baseline. Within-regime ambiguity affects only a narrow tail portion, maintaining equilibrium near the TSUE-SP solution. This similarity is attributed to fixing $p_w$.

In nonstationary environments, regime frequencies are uncertain due to seasonal shifts, changing storm patterns, or sparse flooding observations. Even a modest misspecification of $p_w$ can alter tail membership and perceived costs, especially near the CVaR bound. To capture this epistemic uncertainty in our simple example, we allow both layers to vary. First, $p_w$ deviates from its nominal estimate within a small total variation ($\ell_1$) neighborhood. Second, for each $w\in\mathcal W_F$, the conditional law $P(\xi\mid W=w)$ is given added robustness within a $1$-Wasserstein ball. 


\begin{figure}[htbp]
  \centering
  \includegraphics[width= 0.7\textwidth]{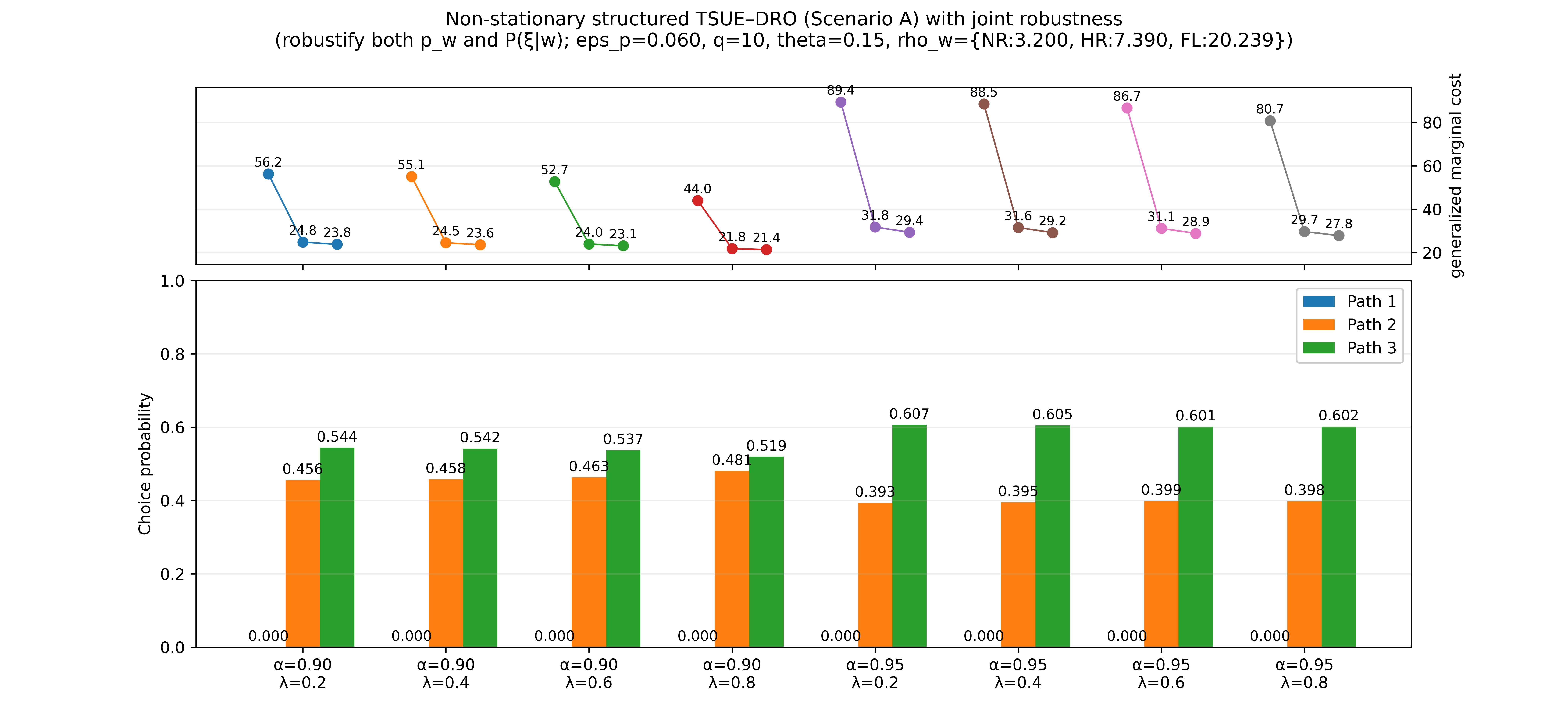}
  \caption{Comparison of 1-Wasserstein piecewise stationary TSUE–DRO (scenario A) on three-path network with two layers joint robustness.}
  \label{fig:dro_nonstationary_A_joint}
\end{figure}

Figure~\ref{fig:dro_nonstationary_A_joint} shows results similar to Figure~\ref{fig:b2_dro_stationary}, but with higher generalized costs. Path~3 dominates Path~2 regardless of $\alpha$ or $\lambda$, as the joint nonstationary ambiguity set drives the solution into a disruption regime. Compared to the stationary case, nonstationary DRO is more conservative. It allows both conditional laws $P(\xi\mid W=w)$ and state frequencies $p_w$ to vary within Wasserstein balls, enabling probability mass to shift toward more disruptive states such as flooding. This amplifies the impact of flooding on perceived costs, yielding higher generalized marginal costs than the stationary DRO case.


\begin{figure}[htbp]
  \centering
  \includegraphics[width= 0.65\textwidth]{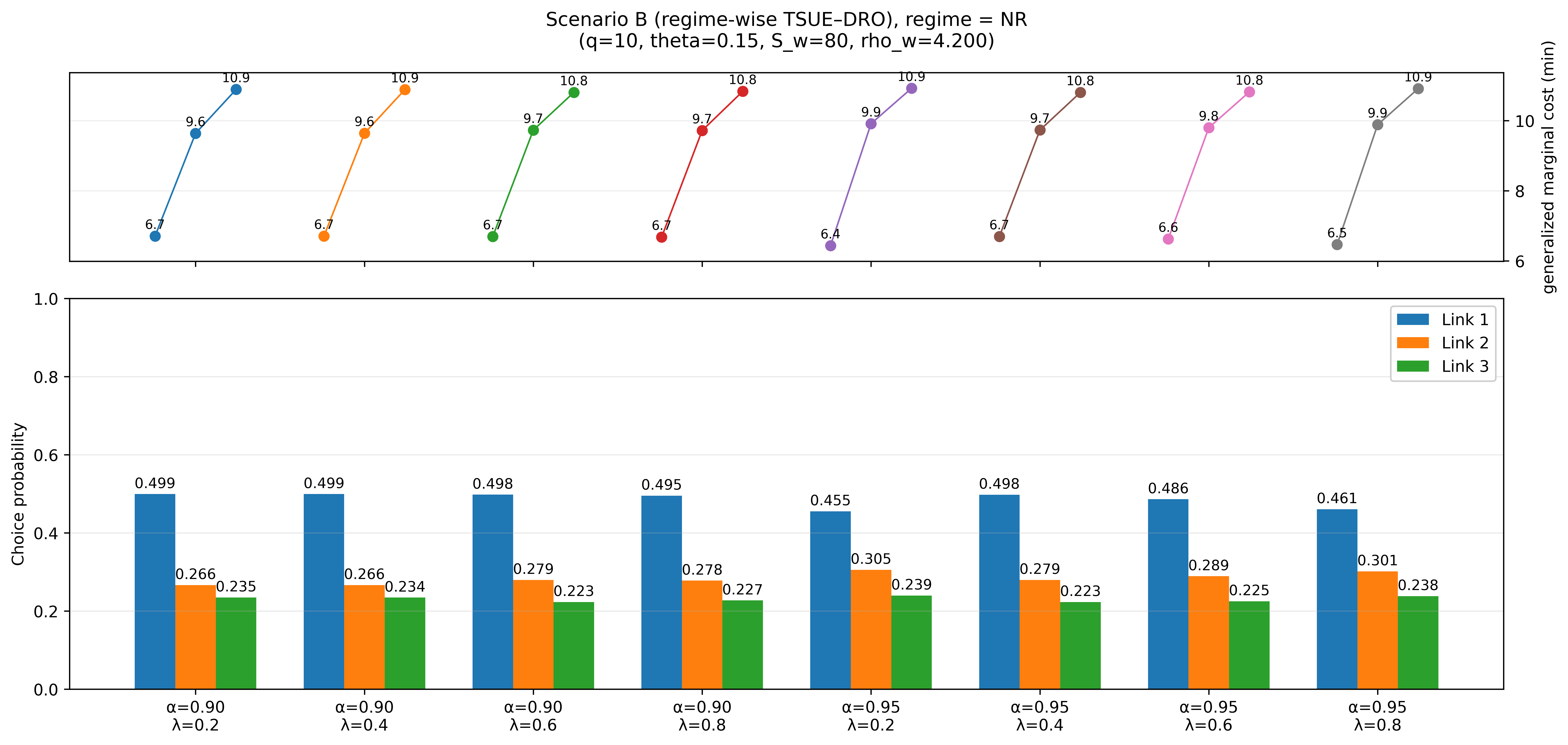}
  \includegraphics[width= 0.65\textwidth]{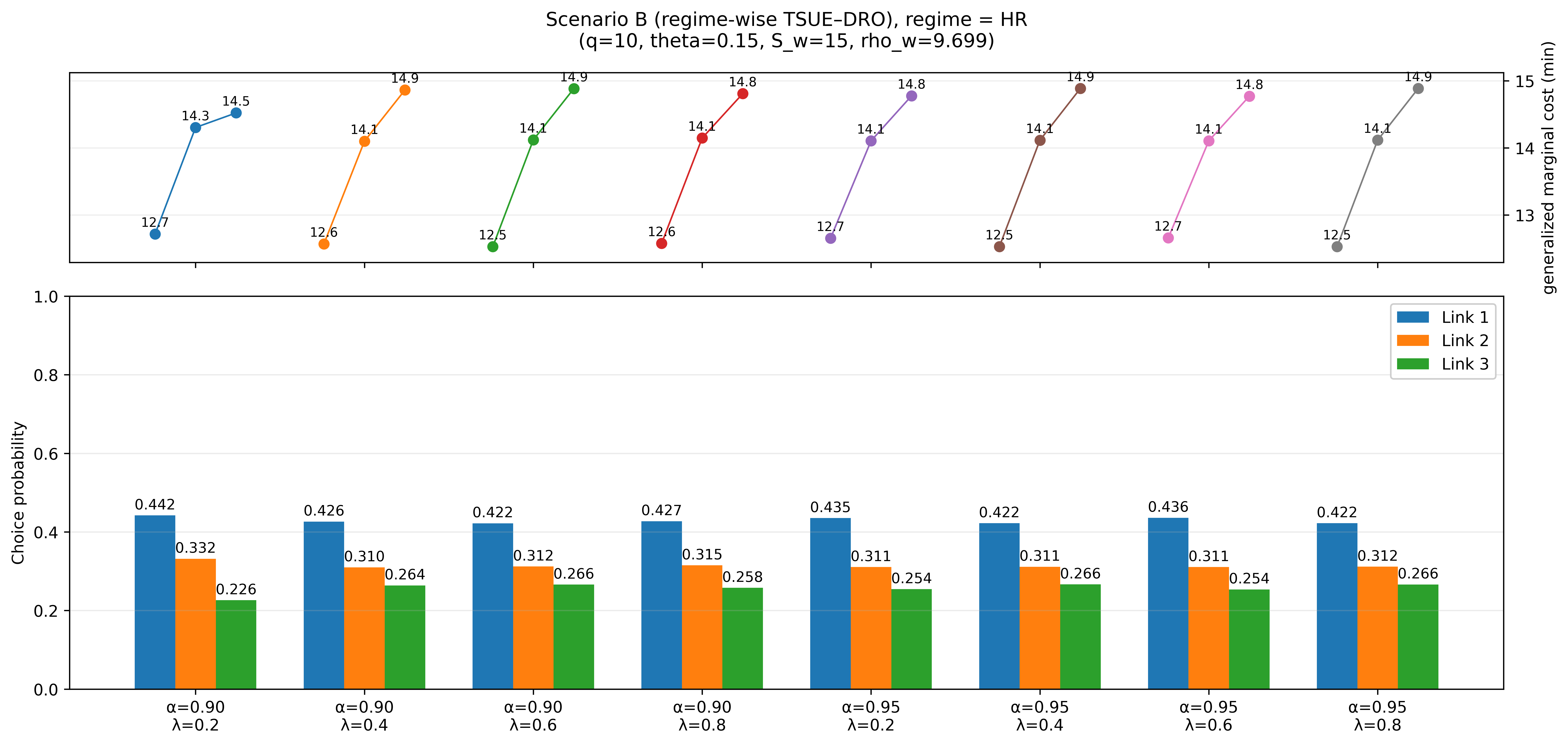}
  \includegraphics[width= 0.65\textwidth]{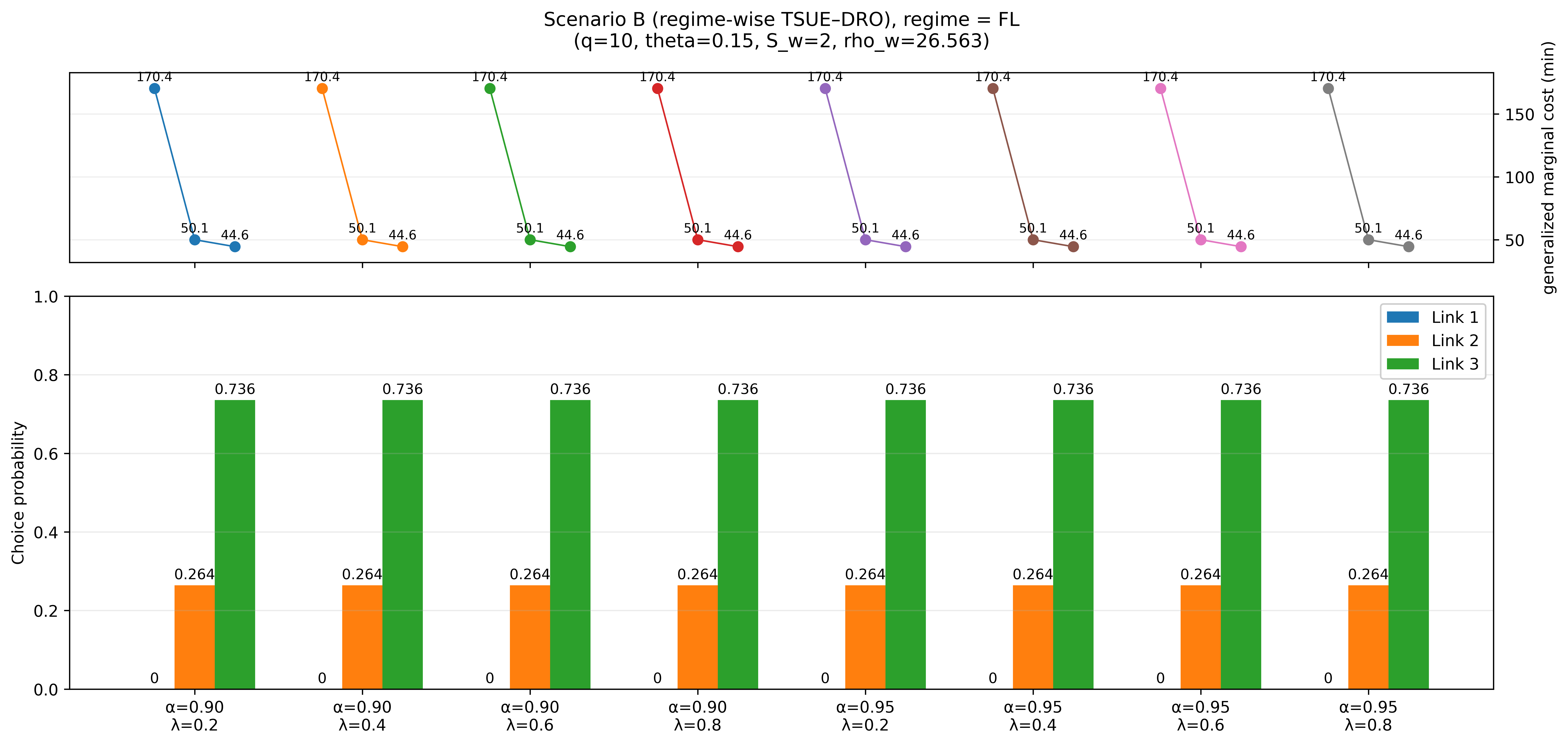}
  \caption{Comparison of  1-Wasserstein piecewise stationary TSUE–DRO (scenario B) on the three-path network.}
  \label{fig:dro_nonstationary_B2}
\end{figure}

Figure~\ref{fig:dro_nonstationary_B2} (scenario~B2) examines path choice behavior under different regimes. Under NR, Path~1 attracts the most travelers. The ordering of path choice probability $P_1 > P_2 > P_3$ holds regardless of $\alpha$ and $\lambda$. In this regime, changes in either $\lambda$ or $\alpha$ produce minimal shifts in choice probabilities because the generalized marginal costs remain stable. Under HR, the Path~1 probability decreases but still dominates Paths~2 and 3. The generalized cost changes by at most 0.3 even between different $\alpha$ values. Under flooding, no travelers choose Path~1. The generalized marginal cost on link 1 exceeds 170, and Path~3 becomes the most attractive. The numerical results demonstrate stability: choice probabilities and costs remain nearly constant regardless of parameter variations.

\section{Case study with real flooding network impacts in Chicago, IL}
\label{sec:numerical}

We consider an indicative $3\times 3$ transportation grid network as a representation of Chicago’s downtown/Loop network \citep{BestChicagoNeighborhoods}, with nine nodes and 12 bidirectional links (Fig.~\ref{fig:chiago_downtown}). The speed limit is 30 mph. Link capacities are set using the Highway Capacity Manual guideline of 1{,}900 vehicles h$^{-1}$ lane$^{-1}$ \citep{spack_2011}. The OD matrix has three origins (c1, c2, c3) and one destination (a3), with 4{,}000 trips from each origin to a3.

\begin{figure}[htbp]
  \centering
  \includegraphics[width= 0.3\textwidth]{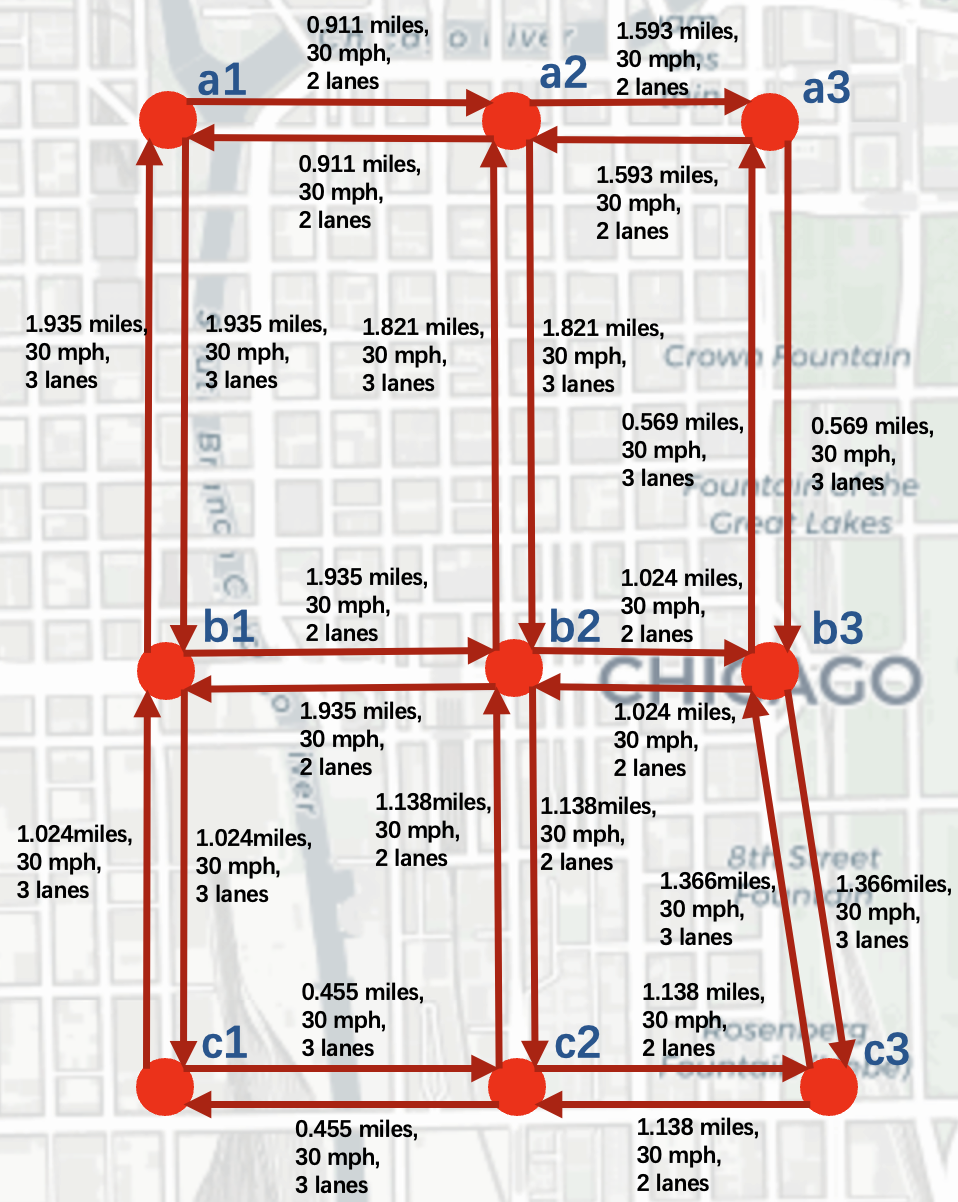}
  \caption{Indicative Chicago Downtown / Loop transportation network}
  \label{fig:chiago_downtown}
\end{figure}

From c1 to a3, there are 12 loop-free routes. We retain only routes with flooding impassability probability below a reliability tolerance $\tau$, assuming independent link closures $K_{\text{feasible}}=\left\{k \in K_{od} \mid P_k^{\text{closed}} \leq \tau\right\}, \quad P_k^{\text{closed}}=1-\prod_{j \in k}\left(1-p_j^{\text{closed}}\right)$. Setting $p_j^{\text{closed}}=0.02$ for all links, $\tau\in\{0.2,0.12,0.08\}$ yields 12, 10, and 6 feasible routes, respectively. Each link is evaluated under five traffic scenarios (minor, moderate, significant, severe, critical), defined by lane count, representative speed, and occurrence probability (Table~\ref{tab:severity_compact_all}). Speed modifies travel time; lane count selects the BPR parameterization. Construction details are in Appendix~\ref{app:scenario_build}. We fix $\alpha(\xi)=0.15$, $\beta(\xi)=4$, $\theta=1$, and $\tau=0.1$. The baseline scenario uses free-flow speed of 30~mph and nominal capacities from Fig.~\ref{fig:chiago_downtown}.

Applying TSUE-SP (Approach~B2), flow concentrates on a few routes: Path~6 carries ${\sim}61.2\%$ of demand for OD (c2,a3); all flow for (c3,a3) uses Path~11; and Path~17 dominates (c1,a3) with a $43.4\%$ - $49.1\%$ share across $(\alpha,\lambda)$ (Tables~\ref{tab:tsue_sp_route_flows_ext} -\ref{tab:link_flows_alpha_lambda_split}). Fixing $\alpha=0.40$, decreasing $\lambda$ from $0.30$ to $0.10$ shifts flow toward shorter radial links (e.g., $a1\rightarrow a2$ increases from $1979.6$ to $2016.3$~veh/h) and away from longer bypasses (e.g., $b3\rightarrow a3$ decreases from $9053.1$ to $9004.1$~veh/h). With $\lambda=0.30$ and increasing $\alpha\in\{0.40,0.50,0.60,0.80,0.90\}$, flow further concentrates on the central spine: Path~0 for OD (c2,a3) vanishes by $\alpha=0.60$, and OD (c3,a3) locks to its single radial route. In general, stronger tail sensitivity (smaller $\lambda$ or larger $\alpha$) favors shorter, reliable radial corridors and reduces reliance on the high-risk bypass $b3\rightarrow a3$.

To solve the stationary TSUE-DRO model, we use Benders decomposition in Section~\ref{sec:benders}. Each iteration evaluates $Z(f,\xi^s)$ and subgradients, adds an affine cut, and resolves the resulting linear programming (LP) main problem. We use Mosek to solve the main problems efficiently.

\begin{figure}[htbp]
  \centering
  \includegraphics[width= 0.45\textwidth]{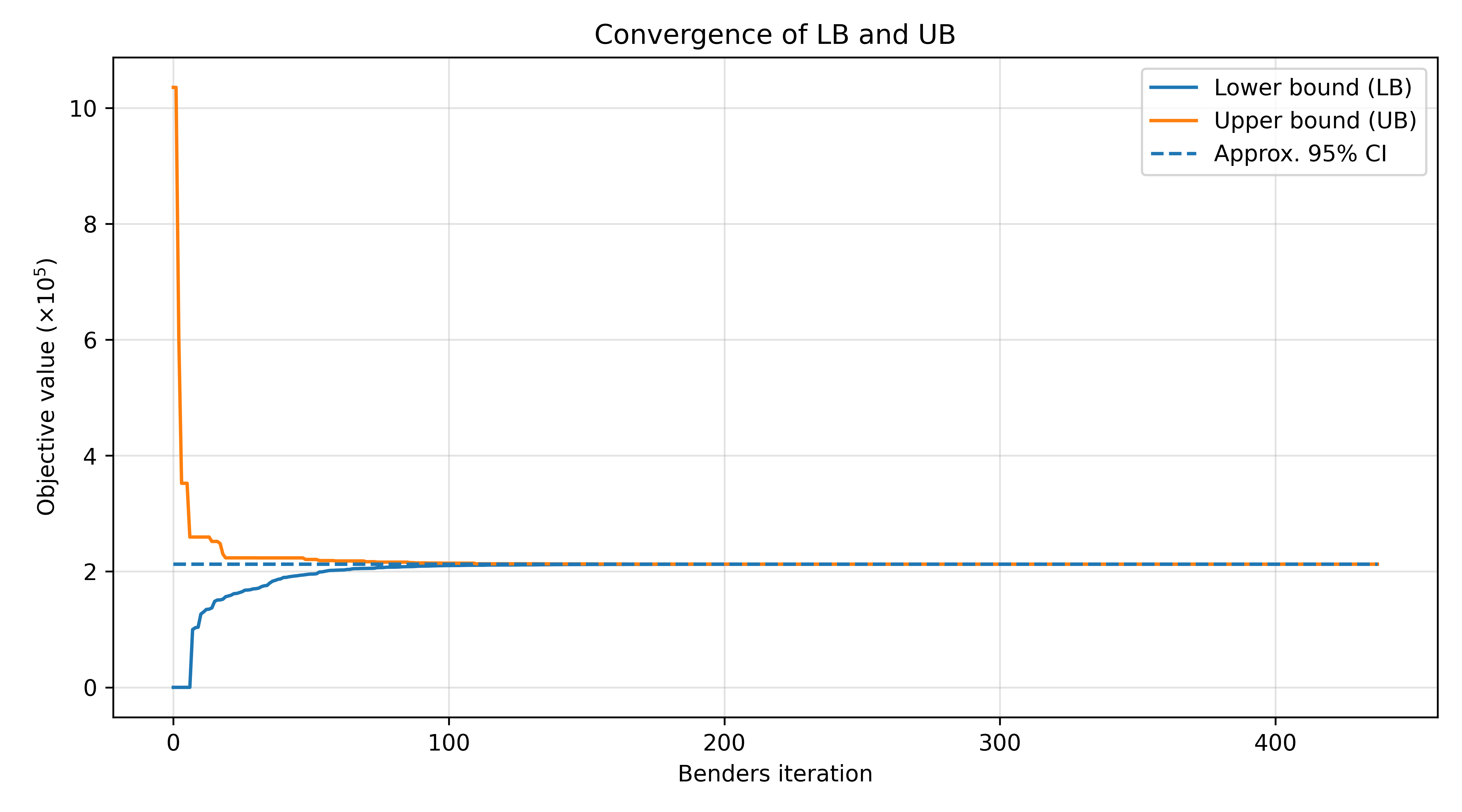}
  \includegraphics[width= 0.4\textwidth]{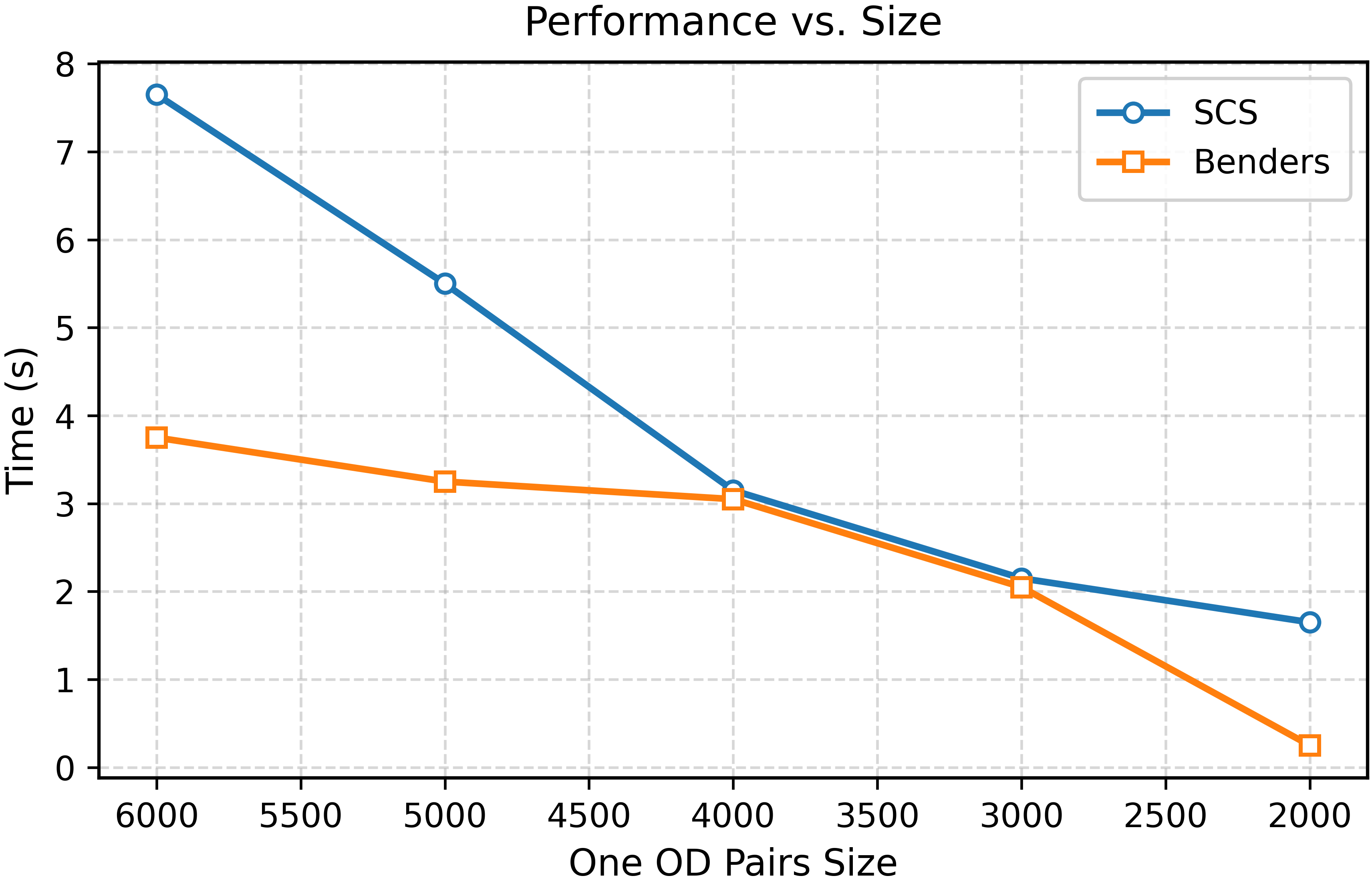}
  \caption{Benders decomposition lower and upper bound convergence and computational time for the TSUE-DRO solution when $\alpha = 0.4, \lambda = 0.2$}
  \label{fig:ben_dec}
\end{figure}

Figure~\ref{fig:ben_dec} illustrates convergence and runtime for $(\alpha,\lambda)=(0.4,0.2)$. The bounds converge in 437 iterations to an objective of $212578.442$, with most of the gap closed in the first $\approx 60$ cuts. Compared to solving the SOCP using cvxpy/splitting conic solver (SCS), Benders is faster at larger scales (e.g., 3.8s vs 7.5s at 6,000 travelers per OD), comparable around 3{,}000 travelers (both $\approx$2s), and faster again at 2{,}000 travelers. Figure~\ref{fig:obj_lambda} supports Propositions~\ref{prop:3}--\ref{prop:4}. The TSUE-DRO objective decreases monotonically in $\lambda$ for each $\alpha\in\{0.4,0.5,0.6,0.7\}$. For the same $\alpha$ and $\lambda$, TSUE-DRO always yields a higher objective value than TSUE-SP.

\begin{figure}[htbp]
  \centering
  \includegraphics[width= 0.4\textwidth]{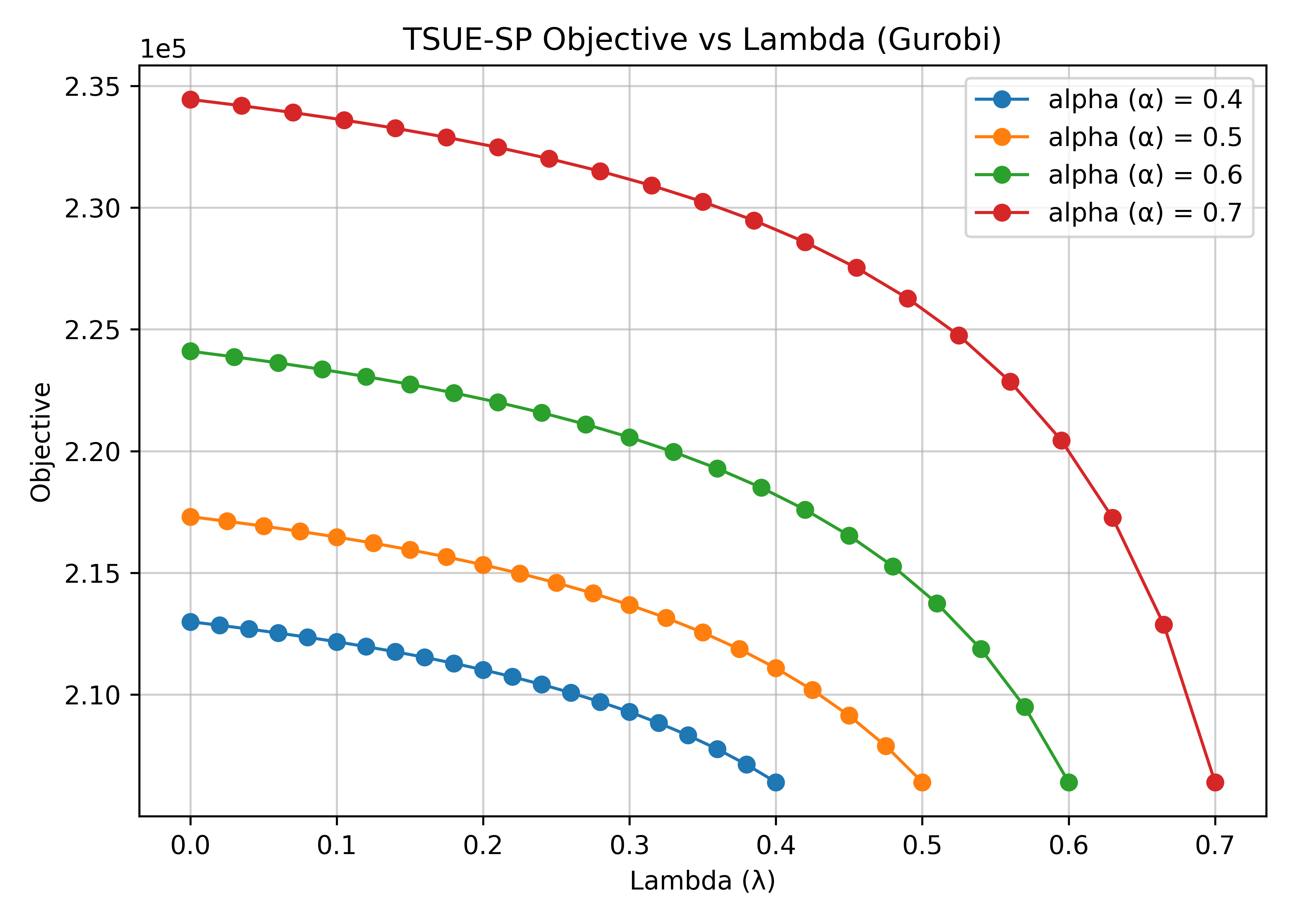}%
  \includegraphics[width= 0.4\textwidth]{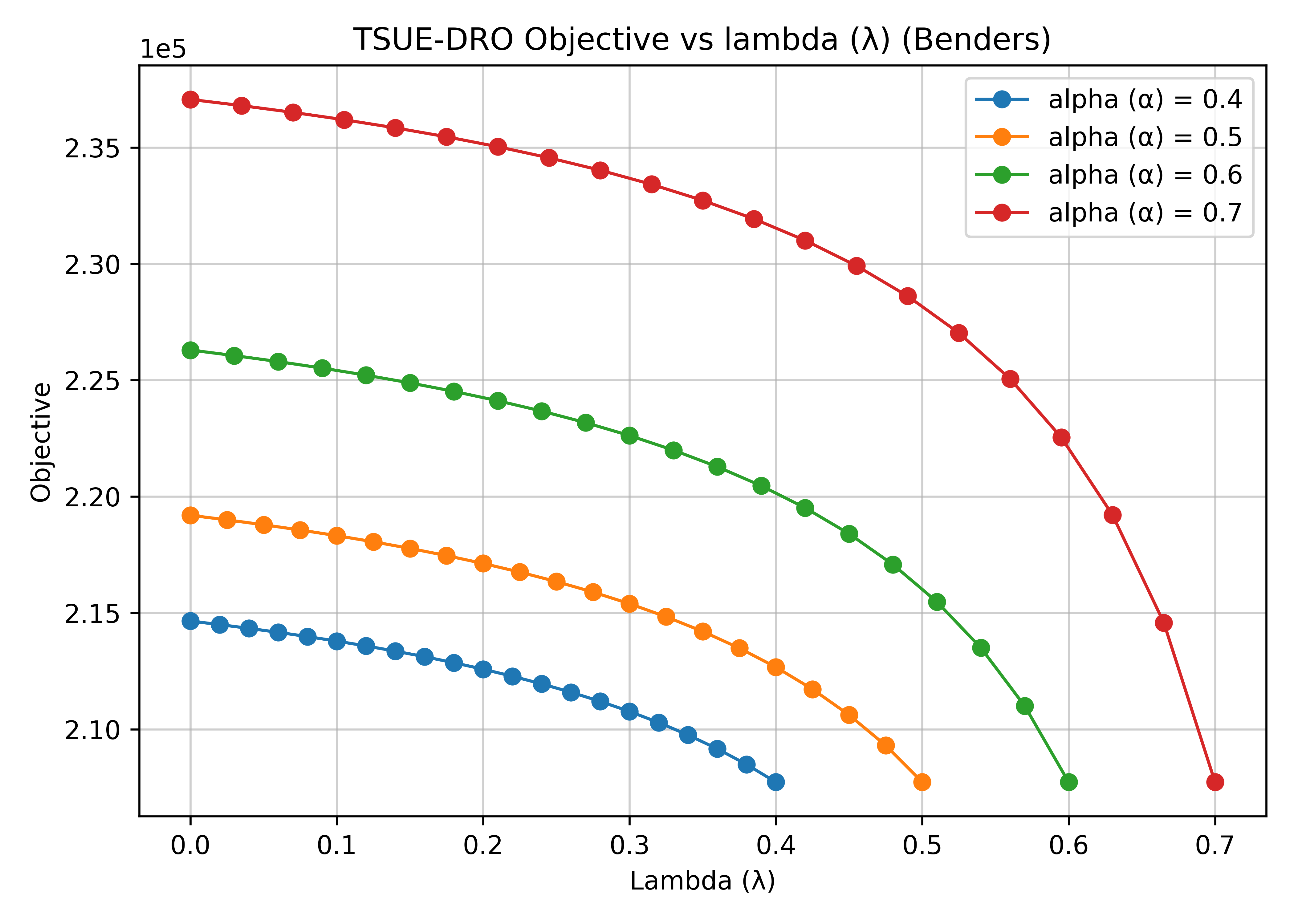}
  \caption{Objective value vs. $\boldsymbol{\lambda}$. The left panel reports the objective function under TSUE-SP (Approach B2); the right panel reports the objective function under the stationary TSUE-DRO.}
  \label{fig:obj_lambda}
\end{figure}

Tables~\ref{tab:dro_route}--\ref{tab:link_flows_DRO} in appendix~\ref{app:table_dro_chicago} show analogous but less sensitive to parameters patterns under stationary TSUE-DRO. OD (c2,a3) flow splits mainly between Path~6 (47.4\%--49.3\%) and Path~7 (28.0\%--28.9\%), OD (c3,a3) flow remains almost entirely on Path~11 (98.4\%--98.6\%), and OD (c1,a3) flow is still mainly using Path~17 (about 52\%). With $\alpha=0.40$, reducing $\lambda$ from $0.30$ to $0.10$ produces modest rerouting that strengthens the corridor a2$\rightarrow$a3 (4294.9$\rightarrow$4320.3~veh/h) use and reduces use of b3$\rightarrow$a3 (7705.1$\rightarrow$7679.7~veh/h). Varying $\alpha$ at $\lambda=0.30$ changes flows only moderately and results in flow on long detours at low volumes.

Comparing TSUE-SP and TSUE-DRO, distributional robustness shifts flow toward reliable radial links and reduces sensitivity to $(\alpha,\lambda)$. Under $(\alpha,\lambda)=(0.40,0.30)$, link a2$\rightarrow$a3 carries $2946.9$~veh/h in SP versus $4294.9$~veh/h in DRO  (+45.74\%), while b3$\rightarrow$a3 decreases from $9053.1$ to $7705.1$~veh/h  (-17.49\%). Link flows also vary less under parameter perturbation, consistent with the stabilizing effect of the Wasserstein ambiguity set.

\section{Conclusion}
\label{sec:conclusion}

We extend the TSUE framework by integrating tail risk quantification with behavioral route choice, supporting resilient transportation planning through the confidence level and the risk aversion weight. To reduce sensitivity to these parameters, we propose path-based and potential-based TSUE-SP formulations and establish the conditions under which they are equivalent. We further develop TSUE-DRO under stationary and non-stationary cases within the 1-Wasserstein ambiguity ball, and derive a Benders decomposition algorithm to solve the resulting model.

Numerical experiments in a stylized (3 $\times$ 3) transportation network in downtown Chicago, IL show consistent responses across explicit hazard scenarios, TSUE-SP, and TSUE-DRO, with higher risk sensitivity shifting flow away from hazard-prone bottlenecks toward more reliable corridors. The results also indicate that distributional robustness based on Wasserstein ambiguity strengthens and stabilizes this traffic flow redistribution by reducing sensitivity to extreme parameter choices. Computationally, we develop a tailored Benders decomposition algorithm that solves TSUE-DRO instances within 400 iterations and reduces runtime to roughly half of that required by SCS.

Future research should explore scaling this approach to larger transportation networks through advanced computational methods, such as Benders decomposition with partitioned parallelism or column generation with on-demand path addition.

\ACKNOWLEDGMENT{}
 This research was partially supported by the Clean Energy and Equitable Transportation Solutions (CLEETS) NSF-UKRI Global Center, under NSF award no. 2330565, and the NSF CAREER award no. 2237881. 

\bibliography{reference.bib}
\newpage
\begin{appendices}
\section{Parameters}
\label{app:para_table}

In this Appendix, we present the mathematical notation in Table \ref{tab:notation}.

{\scriptsize

\setlength{\tabcolsep}{3pt}
\renewcommand{\arraystretch}{1.12}
\setlength{\LTleft}{-2cm}
\setlength{\LTright}{\fill}
\begin{longtable}{@{}p{2cm}p{2cm}p{5.8cm}p{2cm}p{2cm}p{5.8cm}@{}}
\caption{Nomenclature}\label{tab:notation}\\
\toprule
\textbf{Symbol} & \textbf{Type} & \textbf{Description} &
\textbf{Symbol} & \textbf{Type} & \textbf{Description}\\
\midrule
\endfirsthead

\multicolumn{6}{@{}l}{\tablename~\thetable\ (\textit{continued})}\\
\toprule
\textbf{Symbol} & \textbf{Type} & \textbf{Description} &
\textbf{Symbol} & \textbf{Type} & \textbf{Description}\\
\midrule
\endhead

\midrule
\multicolumn{6}{r@{}}{\textit{Continued on next page}}\\
\endfoot

\bottomrule
\endlastfoot

\multicolumn{6}{@{}l}{\textit{Network and paths}}\\
\addlinespace[2pt]
$G=(V,E)$ & Graph & Directed transportation network & $V$ & Set & Nodes\\
$E$ & Set & Directed links (arcs) & $a$ & Index & Link index\\
$(o,d)$ & Indices & OD pair & $K_{od}$ & Set & Candidate paths for $(o,d)$\\
$k$ & Index & Path index & $\delta_{a,k}^{od}$ & Parameter & Link--path incidence (0/1)\\

\addlinespace[4pt]
\multicolumn{6}{@{}l}{\textit{Flows and demand}}\\
\addlinespace[2pt]
$q_{od}$ & Parameter & OD demand & $f_k^{od}$ & Decision var. & Path flow\\
$f$ & Decision vec. & All path flows & $x_a$ & Decision var. & Link flow\\
$f^{w}$ & Decision var. & Regime-specific flow vector (Scenario B) \\

\addlinespace[4pt]
\multicolumn{6}{@{}l}{\textit{Uncertainty and scenarios}}\\
\addlinespace[2pt]
$\xi_a$ & Random var. & Link operating state & $\xi$ & Random vec. & Network state (all links)\\
$s$ & Index & Scenario index & $S$ & Parameter & Number of scenarios\\
$\xi^s$ & Realization & Scenario realization & $p_s$ & Parameter & Scenario probability weight\\
$p_a^{s}$ & Parameter & Link-level scenario probability & $\mathbb{E}[\cdot]$ & Operator & Expectation\\

\addlinespace[4pt]
\multicolumn{6}{@{}l}{\textit{Link travel time (extended BPR)}}\\
\addlinespace[2pt]
$t_a(\cdot,\xi_a)$ & Function & Link travel time under $\xi_a$ & $t_a^{s}(\cdot)$ & Function & Scenario-$s$ link travel time\\
$t_a^{0}(\xi_a)$ & Parameter & Free-flow time (state-dependent) & $c_a(\xi_a)$ & Parameter & Capacity (state-dependent)\\
$\alpha(\xi_a)$ & Parameter & BPR coefficient (not CVaR $\alpha$) & $\beta(\xi_a)$ & Parameter & BPR exponent\\
$\Delta_a(\xi_a)$ & Parameter & Fixed delay (state-dependent) &  &  & \\

\addlinespace[4pt]
\multicolumn{6}{@{}l}{\textit{Path travel time and truncated logit}}\\
\addlinespace[2pt]
$t_k^{od}(v,\xi)$ & Random var. & Random path travel time & $t_k^{od,s}(v)$ & Function & Scenario-$s$ path travel time\\
$\phi_k^{od}(v)$ & Function & Deterministic path cost in kernel & $\phi_{k,\alpha,\lambda}^{od}(v)$ & Function & Risk-adjusted path cost (Approach A)\\
$\theta$ & Parameter & Logit time sensitivity / scale & $\pi_{od}$ & Parameter & Truncation (feasibility) cutoff\\
$P_k^{od}$ & Probability & Truncated-logit choice probability & $[x]_+$ & Operator & Positive part\\
$U_k$ & Random var. & Path utility & $\delta_k$ & Random var. & Gumbel error term\\
$K_{od}^{\alpha,\lambda}(v)$ & Set & Admissible paths under risk screen & $\mu_{od}$ & Dual var. & OD conservation multiplier (reservation cost)\\
$g_k^{od}(f)$ & Function & Induced risk-adjusted marginal path cost &  &  & \\

\addlinespace[4pt]
\multicolumn{6}{@{}l}{\textit{Potentials and risk measures}}\\
\addlinespace[2pt]
$B(v)$ & Function & Beckmann congestion potential & $\widehat Z^s(f)$ & Function & Scenario-$s$ congestion potential\\
$\widehat Z(f,\xi)$ & Random var. & Random congestion potential & $\Psi(f)$ & Function & Entropy regularization term\\
$Z^s(f)$ & Function & Scenario-$s$ TSUE potential & $Z(f,\xi)$ & Random var. & Random TSUE potential\\
$\operatorname{VaR}_\alpha(\cdot)$ & Risk measure & Value-at-Risk (interpretation) & $\operatorname{CVaR}_\alpha(\cdot)$ & Risk measure & Conditional Value-at-Risk\\
$\alpha$ & Parameter & CVaR confidence level & $\lambda$ & Parameter & Behavioral tail-sensitivity parameter\\
$\bar\lambda(\alpha,\lambda)$ & Parameter & Effective mixing weight (normalized CE) & $\Phi_{\alpha,\lambda}(\cdot)$ & Function & Normalized mean--CVaR certainty equivalent\\
$\tau_\alpha(\cdot)$ & Function & Buffer vs.\ mean-excess indicator & $M$ & Parameter & Large penalty / upper bound for extreme delay\\

\addlinespace[4pt]
\multicolumn{6}{@{}l}{\textit{CVaR auxiliary variables and tail weights}}\\
\addlinespace[2pt]
$\gamma$ (or $t$) & Aux. var. & CVaR threshold (RU/epigraph) & $u_s$ & Aux. var. & Scenario excess (system-level)\\
$\gamma_k^{od}$ & Aux. var. & Path-level CVaR threshold & $u_{k,s}^{od}$ & Aux. var. & Path-level scenario excess\\
$\chi_s$ & Aux. var. & Tail weight in CVaR subgradient & $\chi_s^{\alpha}$ & Parameter & Canonical tail weights (ordered scenarios)\\
$F(m)$ & Function & Cumulative scenario probability & $m(\alpha)$ & Parameter & VaR index in ordered scenarios\\
$\widetilde p_s$ & Parameter & Distorted scenario probabilities (regime dominance) & $\tau_{k}^{od,s}(v)$ & Function & Scenario-$s$ marginal path time\\

\addlinespace[4pt]
\multicolumn{6}{@{}l}{\textit{Wasserstein DRO (stationary)}}\\
\addlinespace[2pt]
$P_0$ & Distribution & Empirical distribution of $\xi$ & $\widehat P$ & Distribution & Unknown true distribution\\
$Q$ & Distribution & Alternative law in ambiguity set & $\mathcal D$ & Set & Wasserstein ambiguity set\\
$W_1(Q,P_0)$ & Metric & 1-Wasserstein distance & $\rho$ & Parameter & Wasserstein radius\\
$d(\cdot,\cdot)$ & Function & Ground metric (not destination $d$) & $\|\cdot\|_2$ & Operator & Euclidean norm\\
$\Pi(Q,P_0)$ & Set & Coupling (transport plan) set & $\pi$ & Measure & A coupling in $\Pi(Q,P_0)$\\
$\Xi$ & Set & Support of $\xi$ & $m$ & Parameter & Dimension of $\xi$\\

\addlinespace[4pt]
\multicolumn{6}{@{}l}{\textit{Finite-sample calibration}}\\
\addlinespace[2pt]
$N$ & Parameter & Number of samples & $\delta$ & Parameter & Confidence parameter\\
$\rho_{N,\delta}$ & Parameter & Finite-sample radius & $\nu$ & Parameter & Light-tail exponent\\
$A$ & Parameter & Moment constant & $c_1,c_2$ & Parameters & Concentration constants\\

\addlinespace[4pt]
\multicolumn{6}{@{}l}{\textit{DRO dual / SOCP / cutting planes}}\\
\addlinespace[2pt]
$g_t(f,\xi)$ & Function & Hinge-augmented payoff (dual) & $\kappa$ & Aux. var. & Wasserstein dual variable\\
$s_i$ & Aux. var. & Epigraph variable (sample $i$) & $\xi^i$ & Sample & Empirical sample $i$\\
$Z_0(f)$ & Function & Affine-in-$\xi$ intercept & $h(f)$ & Function & Affine-in-$\xi$ coefficient vector\\
$a_0(f),b_0(f)$ & Functions & First affine piece (SOCP case) & $a_1(f),b_1(f,t)$ & Functions & Second affine piece (SOCP case)\\
$r$ & Index & Exchange/Benders iteration & $\mathcal X_i^{(r)}$ & Set & Active support subset at iter.\ $r$\\
$\Delta_i^{(r)}$ & Quantity & Separation violation & $\xi_i^\star$ & Point & Maximizer added as a cut\\

\addlinespace[4pt]
\multicolumn{6}{@{}l}{\textit{Regime-dependent (structured) ambiguity sets}}\\
\addlinespace[2pt]
$W$ & Random var. & Regime indicator & $\mathcal W_F$ & Set & Regime set (e.g., NR/HR/FL)\\
$p_w$ & Parameter & Regime probability & $N_w$ & Parameter & Samples in regime $w$\\
$P_0^{w}$ & Distribution & Regime empirical distribution & $\mathcal D^{w}$ & Set & Regime Wasserstein ball\\
$Q^{w}$ & Distribution & Regime alternative distribution & $\rho_w$ & Parameter & Regime Wasserstein radius\\
$\mathcal D_{\mathrm{str}}$ & Set & Structured ambiguity set & $\Xi_w$ & Set & Regime support (often $\Xi$)\\
$\kappa_w$ & Aux. var. & Regime Wasserstein dual variable & $s_{w,i}$ & Aux. var. & Regime epigraph variable\\
$t_w$ & Aux. var. & Regime CVaR threshold (Scenario B) & $\delta_w$ & Parameter & Regime confidence split (union bound)\\

\end{longtable}
}
\normalsize

\section{Derivation of truncated logit formula via KKT conditions}
\label{app:kkt}
For an OD pair $(o, d)$ with demand $q_{o d}$ and paths $K_{o d}$, let $f_k^{o d} \geq 0$ denote the flow on path $k$, and $c_k^{o d}$ the perceived cost. The optimization problem then is as follows:
\begin{equation}
\label{eq:optimization_model_appendix1}
\min _{\left\{f_k^{o d} \geq 0\right\}}\left[\sum_{k \in K_{o d}} c_k^{o d} f_k^{o d}+\frac{1}{\theta} \sum_{k \in K_{o d}}\left\{\left(f_k^{o d}+1\right) \ln \left(f_k^{o d}+1\right)-f_k^{o d}\right\}\right],
\end{equation}
subject to: $\sum_{k \in K_{o d}} f_k^{o d}=q_{o d}, \quad f_k^{o d} \geq 0$. The Lagrangian is derived as \eqref{eq:lagrangian_appendix}:
\begin{equation}
\label{eq:lagrangian_appendix}
\mathcal{L}=\sum_k c_k f_k+\frac{1}{\theta} \sum_k\left[\left(f_k+1\right) \ln \left(f_k+1\right)-f_k\right]+\lambda\left(q_{r s}-\sum_k f_k\right)+\sum_k \mu_k\left(-f_k\right),
\end{equation}
with $\lambda$ and $\mu_k \geq 0$ as Lagrange multipliers. The KKT conditions include: 

The KKT conditions characterize the optimum as follows. Stationarity requires $\frac{\partial \mathcal{L}}{\partial f_k}=c_k+\frac{1}{\theta} \ln \left(f_k+1\right)-\lambda-\mu_k=0$. Primal feasibility imposes $\sum_k f_k=q_{od}$ and $f_k \geq 0$. Dual feasibility requires $\mu_k \geq 0$. Complementary slackness demands $\mu_k f_k=0$. For $f_k>0$, we have $\mu_k=0$, which shows:
\begin{equation}
c_k+\frac{1}{\theta} \ln \left(f_k+1\right)=\lambda \Rightarrow f_k=\exp \left[\theta\left(\lambda-c_k\right)\right]-1
\end{equation} 
If $\exp \left[\theta\left(\lambda-c_k\right)\right]-1<0$, then $f_k=0$. Thus, $f_k=\max \left\{0, \exp \left[\theta\left(\lambda-c_k\right)\right]-1\right\}$. Satisfying demand requires $\sum_k \max \left\{0, \exp \left[\theta\left(\lambda-c_k\right)\right]-1\right\}=q_{rs}$. Setting $\lambda=-\theta \pi_{od}$, the flow becomes:
\begin{equation}
f_k=\max \left\{0, \exp \left[-\theta\left(c_k-\pi_{od}\right)\right]-1\right\} .
\end{equation}
Finally, the choice probability is:
\begin{equation}
P_k^{rs}=\frac{\left[\exp \left(-\theta\left(c_k-\pi_{od}\right)\right)-1\right]_{+}}{\sum_{\ell}\left[\exp \left(-\theta\left(c_{\ell}-\pi_{od}\right)\right)-1\right]_{+}} .
\end{equation}

\section{Proof of Proposition \ref{prop:convex}}
\label{app: prop1}

\noindent First, the BPR function (as  in Eq. \eqref{eq:bpr}) is known to be convex and so is the summation $\sum_{a \in A} \int_0^{x_a} t_a(\omega) \mathrm{d} \omega$ in $x_a$, as the integral of a convex function is convex. Then, we consider the entropy term in the objective function $\frac{1}{\theta} \sum_{o, d} \sum_{k \in K_{o d}}\left[\left(f_k^{o d}+1\right) \ln \left(f_k^{o d}+1\right)-f_k^{o d}\right]$. Consider the function $g(f)=(f+1) \ln (f+1)-f$. Its first derivative is: $g^{\prime}(f)=\ln (f+1)+\frac{f+1}{f+1}-1=\ln (f+1)$. Similarly, the second derivative can be computed as $g^{\prime \prime}(f)=\frac{1}{f+1}>0$ for $f \geq 0$. Thus, $g(f)$ is strictly convex. Finally, the expectation of the summation of the two parts is also strictly convex in $f_k^{o d}$. Overall, the objective is strictly convex. For the second part of the proposition, we derive the Lagrangian as in \eqref{eq:lagrangian}:
\begin{equation}
\begin{aligned}
\mathcal{L}&=\mathbb{E}\left[\sum_{a \in A} \int_0^{x_a} t_a(\omega, \xi) \mathrm{d} \omega+\frac{1}{\theta} \sum_{0, d} \sum_{k \in K_{o d}}\left[\left(f_k^{o d}+1\right) \ln \left(f_k^{o d}+1\right)-f_k^{o d}\right]\right]+\lambda\left(q_{o d}-\sum_k f_k\right)+\sum_k \mu_k\left(-f_k\right) \\
&= \mathbb{E}\left[\sum_{a \in A} \int_0^{x_a} t_a(\omega, \xi) \mathrm{d} \omega \right] +\frac{1}{\theta} \sum_{0, d} \sum_{k \in K_{o d}}\left[\left(f_k^{o d}+1\right) \ln \left(f_k^{o d}+1\right)-f_k^{o d}\right]+\lambda\left(q_{o d}-\sum_k f_k\right)+\sum_k \mu_k\left(-f_k\right),
\end{aligned}
\label{eq:lagrangian}
\end{equation}
with $\lambda$ and $\mu_k \geq 0$ as the Lagrange multipliers. Using $x_a=\sum_l \delta_{a l} f_l$ and the chain rule, we have:
\begin{equation}
\frac{\partial \mathcal{L}}{\partial f_k}=\mathbb{E}\left[\sum_a t_a\left(x_a, \xi\right) \delta_{a k}\right]+\frac{1}{\theta} \ln \left(f_k+1\right)-\lambda-\mu_k, \quad \frac{\partial \mathcal{L}}{\partial \lambda} = q_{od} - \sum_k f_k, \quad \frac{\partial \mathcal{L}}{\partial \mu_k}= -f_k .
\end{equation}

We can then produce the Hessian with ordering $(f, \lambda, \mu)$. For any two paths $k$ and $l$, we have:
\begin{equation}
\frac{\partial^2 \mathcal{L}}{\partial f_k \partial f_l}=\mathbb{E}\left[\sum_a t_a^{\prime}\left(x_a, \xi\right) \delta_{a k} \delta_{a l}\right]+\frac{1}{\theta} \frac{\delta_{k l}}{f_k+1}.
\label{eq:hessian1}
\end{equation}

The first term in Eq. \eqref{eq:hessian1} collects together any two paths that share at least one common link. The second term, corresponding to entropy, is diagonal and strictly positive. Except for these two terms, we also have mixed derivatives with multipliers:
\begin{equation}
\frac{\partial^2 \mathcal{L}}{\partial f_k \partial \lambda}=\frac{\partial^2 \mathcal{L}}{\partial \lambda \partial f_k}=-1, \quad \frac{\partial^2 \mathcal{L}}{\partial f_k \partial \mu_l}=\frac{\partial^2 \mathcal{L}}{\partial \mu_l \partial f_k}=-\delta_{k l},
\end{equation}
as well as the remaining blocks in the form of $\frac{\partial^2 \mathcal{L}}{\partial \lambda^2}=0,  \frac{\partial^2 \mathcal{L}}{\partial \lambda \partial \mu_k}=0, \frac{\partial^2 \mathcal{L}}{\partial \mu_k \partial \mu_l}=0$.

Let $T$ be a square matrix with elements $T_{k l}=\mathbb{E}\left[\sum_a t_a^{\prime}\left(x_a, \xi\right) \delta_{a k} \delta_{a l}\right]$
, and $D=\operatorname{diag}\left\{\frac{1}{\theta\left(f_k+1\right)}\right\}$. Define $H_{f f}=T+D$.
Finally, the Hessian is:
\begin{equation}
\nabla^2 \mathcal{L}=\left[\begin{array}{ccc}
H_{f f} & -\mathbf{1} & -I \\
-\mathbf{1}^{\top} & 0 & 0 \\
-I & 0 & 0
\end{array}\right],
\end{equation}
where $\mathbf{1}$ and $I$ are suitably dimensioned vectors of ones and identity matrices.

If each cost function satisfies $t_a^{\prime}\left(x_a, \xi\right) \geq 0$, then $T \succeq 0$ and the diagonal matrix $D \succ 0$, which implies that $H_{f f} \succ 0$. Hence, the Lagrangian is strictly convex in the flows (for fixed multipliers), which guarantees uniqueness of the expectation of the TSUE path flow vector.

\section{Misleading Truncation Under Expectation-Based Example}
\label{app:expectation-example}
For example, expectation-based truncation can be misleading in heavy-tailed or mixture settings: for any fixed $\pi_{od}>0$ and any small $\varepsilon>0$, define a travel time random variable
\begin{equation}
T=
\begin{cases}
\pi_{od}+\varepsilon, & \text{w.p. } 1-\delta,\\
0, & \text{w.p. } \delta,
\end{cases}
\qquad \text{with}\qquad
\delta>\frac{\varepsilon}{\pi_{od}+\varepsilon}.
\end{equation}
Then, $\mathbb{E}[T]=(1-\delta)(\pi_{od}+\varepsilon)<\pi_{od}$ so the path remains admissible under $K_{od}^{\mathrm{TL}}(v)$, while $\mathbb{P}(T>\pi_{od})=1-\delta$ can approach $1$ by setting $\varepsilon$ sufficiently small. Thus, when $\phi_k^{od}(v)$ uses expectation alone, the truncated logit assesses paths by mean performance rather than service reliability. This is problematic in hazard-prone networks, where disruption regimes generate frequent threshold exceedance despite low unconditional means.

\section{Proof of Proposition \ref{proposition_prop2}}
\label{app: prop2}
Define two mutually exclusive and collectively exhaustive events: $A=\left\{Z(f, \xi) \leq \operatorname{VaR}_{\alpha}(Z(f, \xi))\right\}$ with probability $P(A)=\alpha$, and $B=\left\{Z(f, \xi)>\operatorname{VaR}_{\alpha}(Z(f, \xi))\right\}$ with probability $P(B)=1-\alpha$. By the law of total expectation $\mathbb{E}[Z(f, \xi)]=\mathbb{E}[Z(f, \xi) \mid A] \cdot P(A)+\mathbb{E}[Z(f, \xi) \mid B] \cdot P(B)$. Substituting the probabilities yields:
\begin{equation}
\mathbb{E}[Z(f, \xi)]=\mathbb{E}\left[Z(f, \xi) \mid Z(f, \xi) \leq \operatorname{VaR}_\alpha(Z(f, \xi))\right] \cdot \alpha+\mathbb{E}\left[Z(f, \xi) \mid Z(f, \xi)>\operatorname{VaR}_\alpha(Z(f, \xi))\right] \cdot(1-\alpha).
\end{equation}
By definition, $\mathbb{E}\left[Z(f, \xi) \mid Z(f, \xi)>\operatorname{VaR}_\alpha(Z(f, \xi))\right]=\operatorname{CVaR}_\alpha(Z(f, \xi))$. Thus:
\begin{equation}
\mathbb{E}[Z(f, \xi)]=\mathbb{E}\left[Z(f, \xi) \mid Z(f, \xi) \leq \operatorname{VaR}_\alpha(Z(f, \xi))\right] \cdot \alpha+\operatorname{CVaR}_\alpha(Z(f, \xi)) \cdot(1-\alpha).
\end{equation}
Note that $\mathbb{E}\left[Z(f, \xi) \mid Z(f, \xi) \leq \operatorname{VaR}_\alpha(Z(f, \xi))\right]$ represents the expected value in the better $\alpha$ fraction (lower travel times), while $\operatorname{CVaR}_\alpha(Z(f, \xi))$ represents the expected value in the worse $1-\alpha$ fraction (higher travel times due to hazards). Since $Z$ is a cost variable, values in $B$ exceed those in $A$. Therefore, $\mathbb{E}\left[Z(f, \xi) \mid Z(f, \xi) \leq \operatorname{VaR}_\alpha(Z(f, \xi))\right] \leq \operatorname{VaR}_\alpha(Z(f, \xi)) \leq \operatorname{CVaR}_\alpha(Z(f, \xi))$. Rearranging the total expectation equation:
\begin{equation}
\operatorname{CVaR}_\alpha(Z(f, \xi)) \cdot(1-\alpha)=\mathbb{E}[Z(f, \xi)]-\mathbb{E}\left[Z(f, \xi) \mid Z(f, \xi) \leq \operatorname{VaR}_\alpha(Z(f, \xi))\right] \cdot \alpha.
\end{equation}
The subtracted term is nonnegative and at most equal to $\mathbb{E}[Z(f, \xi)]$. Thus, $\operatorname{CVaR}_\alpha(Z(f, \xi)) \geq \mathbb{E}[Z(f, \xi)]$, with equality holding if and only if $Z$ is constant. Given that hazards cause variability, $Z$ is not constant, leading to $\operatorname{CVaR}_\alpha(Z(f, \xi))>\mathbb{E}[Z(f, \xi)]$, which completes the proof.

\section{Proof of Proposition \ref{prop:cvar_saturation}}
\label{app: prop3}

If a random travel time $\tilde Z$ is absolutely continuous (has a pdf),
then $\mathbb{P}(\tilde Z=M)=0$ for any fixed $M$. However, in the truncated simulation, it is
able to impose an operational cap/penalty (time-out, infeasibility penalty), which censors the realized cost at a finite upper bound $M$. Define $Z := \min(\tilde Z,\, M)$. Even when $\tilde Z$ is continuous, the capped variable $Z$ has an atom at $M$: $\mathbb{P}(Z=M) \;=\; \mathbb{P}(\tilde Z\ge M)$, which can be strictly positive. Define $\Psi(\gamma):=\gamma+\frac{1}{1-\alpha}\mathbb{E}\bigl[(Z-\gamma)_+\bigr]$
as in \eqref{eq: cvar}. Since $Z\le M$ a.s., if $\gamma\ge M$ then $(Z-\gamma)_+=0$ a.s., hence $\Psi(\gamma)=\gamma\ge M$. In particular, $\Psi(M)=M$, which implies $\inf_{\gamma\in\mathbb{R}}\Psi(\gamma)\le M$.

If $\gamma<M$, then on the event $\{Z=M\}$ we have $(Z-\gamma)_+=M-\gamma$, and therefore
\begin{equation}
\mathbb{E}\bigl[(Z-\gamma)_+\bigr]
\;\ge\;
\mathbb{E}\bigl[(Z-\gamma)_+\,\mathbf{1}_{\{Z=M\}}\bigr]
=
(M-\gamma)\mathbb{P}(Z=M)
\;\ge\;
(M-\gamma)(1-\alpha),
\end{equation}
where the last inequality uses the assumption $\mathbb{P}(Z=M)\ge 1-\alpha$. It follows that
\begin{equation}
\Psi(\gamma)
\;\ge\;
\gamma+\frac{1}{1-\alpha}(M-\gamma)(1-\alpha)
=
M.
\end{equation}
Thus $\Psi(\gamma)\ge M$ for all $\gamma\in\mathbb{R}$, implying $\inf_{\gamma\in\mathbb{R}}\Psi(\gamma)\ge M$.
Combining with $\inf_{\gamma\in\mathbb{R}}\Psi(\gamma)\le M$ yields $\inf_{\gamma\in\mathbb{R}}\Psi(\gamma)=M,$ i.e., $\operatorname{CVaR}_\alpha(Z)=M$.

Finally, the above argument depends only on the cap $Z\le M$ a.s.\ and the mass condition
$\mathbb{P}(Z=M)\ge 1-\alpha$, and therefore $\operatorname{CVaR}_\alpha(Z)$ is independent of the
distribution of $Z$ on $\{Z<M\}$.

\section{Proof of Proposition \ref{prop:3}}
\label{app: prop4} 

Fix $\alpha\in(0,1)$ and a feasible $f$. Let $\mu(f):=\mathbb{E}[Z(f,\xi)]$, and
$C_\alpha(f):=\operatorname{CVaR}_\alpha(Z(f,\xi))$. Under the normalized certainty equivalent used in the paper, for $\lambda\le \alpha$ we have:
\begin{equation}
\Phi_{\alpha,\lambda}(f)
=
(1-\bar\lambda(\alpha,\lambda))\,\mu(f)+\bar\lambda(\alpha,\lambda)\,C_\alpha(f),
\qquad
\bar\lambda(\alpha,\lambda)=\frac{\alpha-\lambda}{1+\alpha-2\lambda}.
\end{equation}
Since $\alpha$ is fixed, both $\mu(f)$ and $C_\alpha(f)$ are constants with respect to $\lambda$. Rewrite $\Phi_{\alpha,\lambda}(f)=\mu(f)+\bar\lambda(\alpha,\lambda)\big(C_\alpha(f)-\mu(f)\big)$. By Proposition~\ref{proposition_prop2}, $C_\alpha(f)-\mu(f)\ge 0$. Moreover,
\begin{equation}
\frac{\partial}{\partial\lambda}\bar\lambda(\alpha,\lambda)
=
\frac{\alpha-1}{(1+\alpha-2\lambda)^2}
<0
\qquad(\text{since } \alpha<1).
\end{equation}
Therefore,
\begin{equation}
\frac{\partial}{\partial\lambda}\Phi_{\alpha,\lambda}(f)
=
\big(C_\alpha(f)-\mu(f)\big)\,\frac{\partial}{\partial\lambda}\bar\lambda(\alpha,\lambda)
\ \le\ 0,
\end{equation}
which proves that $\Phi_{\alpha,\lambda}(f)$ is non-increasing in $\lambda$ for fixed $\alpha$.

\section{Proof of Proposition \ref{prop:4}}
\label{app: prop5}

Fix $\lambda\in[0,1)$ and consider $\alpha_2>\alpha_1$ with $\lambda<\alpha_1<\alpha_2<1$ (risk-averse region).
Let
\begin{equation}
\mu(f):=\mathbb{E}[Z(f,\xi)],
\qquad
C_i(f):=\operatorname{CVaR}_{\alpha_i}(Z(f,\xi)),\quad i\in\{1,2\}.
\end{equation}
Define $\bar\lambda_i:=\bar\lambda(\alpha_i,\lambda)=\frac{\alpha_i-\lambda}{1+\alpha_i-2\lambda}$.
Then $\bar\lambda_2\ge \bar\lambda_1$ because
$
\frac{\partial}{\partial\alpha}\bar\lambda(\alpha,\lambda)
=
\frac{1-\lambda}{(1+\alpha-2\lambda)^2}
>0.
$
$\operatorname{CVaR}_\alpha(\cdot)$ is non-decreasing in $\alpha$, hence $C_2(f)\ge C_1(f)$.
Under the normalized certainty equivalent,
\begin{equation}
\Phi_{\alpha_i,\lambda}(f)=(1-\bar\lambda_i)\mu(f)+\bar\lambda_i C_i(f),\quad i=1,2.
\end{equation}
Compute the difference:
\begin{align*}
\Phi_{\alpha_2,\lambda}(f)-\Phi_{\alpha_1,\lambda}(f) = (1-\bar\lambda_2)\mu+\bar\lambda_2 C_2-(1-\bar\lambda_1)\mu-\bar\lambda_1 C_1 =
\bar\lambda_2(C_2-C_1)+(\bar\lambda_2-\bar\lambda_1)(C_1-\mu).
\end{align*}
Each term on the right-hand side is nonnegative:
\begin{equation}
\bar\lambda_2\ge 0,\ \ C_2-C_1\ge 0,
\qquad
\bar\lambda_2-\bar\lambda_1\ge 0,\ \ C_1-\mu\ge 0
\end{equation}
(where $C_1-\mu\ge 0$ follows from Proposition~\ref{proposition_prop2}).
Thus $\Phi_{\alpha_2,\lambda}(f)\ge \Phi_{\alpha_1,\lambda}(f)$, proving that $\Phi_{\alpha,\lambda}(f)$ is non-decreasing in $\alpha$ for $\lambda<\alpha$.

\section{Proof of Proposition \ref{prop:B1B2_unique_path} (Explicit reduction from B1 to the B2 single-level program)}
\label{app: prop10}


\begin{proof}{Proof.}
Let $\mathcal F$ denote the feasible path-flow set induced by
\eqref{eq:demand_conservation}--\eqref{eq:nennegative}. Since the demand
conservation constraints are affine and the nonnegativity constraints are
convex, $\mathcal F$ is convex. Moreover, because each OD demand $q_{od}$ is
finite, $\mathcal F$ is bounded and closed.

First consider Approach~B2. By the link-flow relation in \eqref{eq:link_flow},
the link-flow vector is an affine function of the path-flow vector $f$.
Since each $t_a^s(\cdot)$ is nondecreasing, the corresponding Beckmann
integral is convex in the link flow. Hence, for every scenario $s$, the
scenario congestion potential appearing in \eqref{eq:Phi_B2} is convex in
$f$.

The expectation part of \eqref{eq:Phi_B2} is a nonnegative weighted sum of
convex scenario potentials, and is therefore convex. The CVaR part is also
convex in $f$ by the Rockafellar--Uryasev representation in
\eqref{eq:cvar_ru_system}: the RU constraints define the epigraph of convex
scenario-potential functions, and partial minimization over the auxiliary
variables preserves convexity. Since
$\bar\lambda(\alpha,\lambda)\in[0,1]$, the certainty-equivalent congestion
term in \eqref{eq:Phi_B2} is convex in $f$.

Now consider the entropy term defined in \eqref{eq:entropy_term}. For the
scalar function
\[
h(y)=(y+1)\ln(y+1)-y,\qquad y\ge 0,
\]
we have
\[
h''(y)=\frac{1}{y+1}>0.
\]
Therefore, the entropy term in \eqref{eq:entropy_term} is strictly convex in $f$. Since $\theta>0$, the regularization term $\frac{1}{\theta}\Psi(f)$ is also strictly convex. Consequently, the B2 objective in \eqref{eq:ra_tsue_B2} is the sum of a convex certainty-equivalent congestion term and a strictly convex entropy term. Hence the B2 objective is strictly convex on $\mathcal F$.

Because the B2 objective is continuous and $\mathcal F$ is compact, an optimal solution exists. Since the objective is strictly convex on the convex set $\mathcal F$, the optimal path-flow vector for Approach~B2 is unique.

It remains to show that Approach~B1 has the same path-flow solution. By \eqref{eq:Zs_total}, the total scenario TSUE potential in B1 differs from the scenario congestion potential by the entropy term $\frac{1}{\theta}\Psi(f)$, which is deterministic with respect to the scenario index. Therefore, using the translation equivariance of expectation and CVaR, the B1 certainty-equivalent objective in \eqref{eq:Phi_B1} satisfies
\[
\Phi_{\alpha,\lambda}^{\mathrm{B1}}(f)
=
\Phi_{\alpha,\lambda}^{\mathrm{B2}}(f)
+
\frac{1}{\theta}\Psi(f).
\]
The right-hand side is exactly the B2 objective in \eqref{eq:ra_tsue_B2}. Thus Approaches~B1 and~B2 have the same reduced objective function in the path-flow vector $f$.

Since the B2 reduced objective is strictly convex and B1 has the same reduced objective, the B1 objective is also strictly convex in $f$. Therefore B1 also has a unique optimal path-flow vector. Moreover, because the two reduced
objectives are identical in $f$, the unique B1 and B2 path-flow solutions are the same.
\end{proof}

\section{Proof of Proposition  \ref{prop:tsue_convex}}
\label{app: prop8}

We show that the single-level formulation (e.g., \eqref{eq:slcp_B2} for Approach~B2, and \eqref{eq:slcp_B1} for Approach~B1) is a convex program and can be solved with standard TSUE/SUE machinery augmented by the RU auxiliary variables.

Consider Approach~B2 for concreteness. Under discrete scenarios $\{\xi^s\}_{s=1}^S$ with probabilities $p_s$, the risk-averse objective is
\begin{equation}
(1-\bar\lambda)\sum_{s=1}^S p_s\,\widehat Z^s(f)
+\bar\lambda\,\operatorname{CVaR}_\alpha(\widehat Z(f,\xi))
+\frac{1}{\theta}\Psi(f),
\qquad \bar\lambda=\bar\lambda(\alpha,\lambda)\in[0,1].
\end{equation}
Using the RU representation of CVaR, we introduce one scalar $\gamma\in\mathbb{R}$ and one nonnegative slack variable $u_s\ge 0$ for each scenario, together with linear constraints $u_s\ge \widehat Z^s(f)-\gamma,\qquad u_s\ge 0,\quad s=1,\ldots,S.$ This yields the equivalent single-level objective
\begin{equation}
(1-\bar\lambda)\sum_{s=1}^S p_s\,\widehat Z^s(f)
+\bar\lambda\,\gamma
+\frac{\bar\lambda}{1-\alpha}\sum_{s=1}^S p_s\,u_s
+\frac{1}{\theta}\Psi(f),
\end{equation}
subject to the above RU constraints and the standard TSUE feasibility constraints \eqref{eq:demand_conservation}--\eqref{eq:nennegative}.

Convexity follows because each scenario congestion potential $\widehat Z^s(f)=\sum_{a\in A}\int_0^{x_a} t_a^s(\omega)\,d\omega$ is convex in $f$ (via the linear link-flow mapping and convexity of the Beckmann integrals), the entropy term $\Psi(f)$ is convex, and the RU constraints define an epigraph of a convex function. No integer or binary variables are required. Consequently, the resulting formulation is a single-level convex stochastic program solvable with standard convex optimization methods and TSUE algorithms (e.g., projected descent / MSA / decomposition), with the only additional elements being the auxiliary variables $(\gamma,u_s)$ and the linear RU constraints.

\section{Proof of Proposition \ref{prop:b1b2_dro_equiv}}
\label{app: prop9}

For any $Q$, linearity of expectation gives $\mathbb E_Q[Y+b]=\mathbb E_Q[Y]+b$. Moreover, CVaR is
translation invariant with respect to constant shifts: $\operatorname{CVaR}_{\alpha,Q}(Y+b)=
\operatorname{CVaR}_{\alpha,Q}(Y)+b$ (e.g., from the RU representation by the change of variables $t\mapsto t-b$). Substituting these identities into the left-hand side of
\eqref{eq:shift_invariance_sup} shows that the expression inside the supremum equals $(1-\bar\lambda)\mathbb E_Q[Y]+\bar\lambda\,\operatorname{CVaR}_{\alpha,Q}(Y)+b$, and taking $\sup_{Q\in\mathcal D}$ yields \eqref{eq:shift_invariance_sup}. Applying it with $Y(\xi)=\widehat Z(f,\xi)$ and $b=c(f)=\Psi(f)/\theta$ yields \eqref{eq:b1_b2_equiv_pointwise}.




\section{Proof of Proposition~\ref{prop:dro_dual_bound}}
\label{app:dro_dual_bound}
Fix a feasible $f$. By the Rockafellar--Uryasev representation, for any distribution $Q$,
\[
\operatorname{CVaR}_{\alpha,Q}(Z(f,\xi))
=
\inf_{t\in\mathbb R}\left\{
t+\frac{1}{1-\alpha}\mathbb E_Q\big[(Z(f,\xi)-t)_+\big]
\right\}.
\]
Substituting this into the definition of $\Phi_{\alpha,\lambda}^{\mathrm{DRO}}(f)$ gives
\[
\Phi_{\alpha,\lambda}^{\mathrm{DRO}}(f)
=
\sup_{Q\in\mathcal D}\ \inf_{t\in\mathbb R}
\left\{
\bar\lambda\,t+\mathbb E_Q\!\big[g_t(f,\xi)\big]
\right\},
\]
where $g_t$ is defined in \eqref{eq:gt_def}. By the max--min inequality,
\[
\sup_{Q\in\mathcal D}\inf_{t\in\mathbb R}\{\bar\lambda t+\mathbb E_Q[g_t]\}
\;\le\;
\inf_{t\in\mathbb R}\sup_{Q\in\mathcal D}\{\bar\lambda t+\mathbb E_Q[g_t]\}
=
\inf_{t\in\mathbb R}\left\{\bar\lambda t+\sup_{Q\in\mathcal D}\mathbb E_Q[g_t(f,\xi)]\right\}.
\]
Next, by the Wasserstein dual inequality in \citep[Theorem~4.2]{mohajerin_esfahani_kuhn_2017}
(under Assumption~\ref{ass:dro_reg}), for each fixed $t$,
\[
\sup_{Q:W_1(Q,P_0)\le\rho}\ \mathbb E_Q[g_t(f,\xi)]
\;\le\;
\inf_{\kappa\ge 0}
\left\{
\kappa\rho
+
\mathbb E_{P_0}\!\left[
\sup_{\xi\in\Xi}\big(g_t(f,\xi)-\kappa\|\xi-\xi_0\|_2\big)
\right]
\right\}.
\]
Specializing to $P_0=\frac{1}{N}\sum_{i=1}^N\delta_{\xi^i}$ yields
\[
\sup_{Q:W_1(Q,P_0)\le\rho}\ \mathbb E_Q[g_t(f,\xi)]
\;\le\;
\inf_{\kappa\ge 0}
\left\{
\kappa\rho
+
\frac{1}{N}\sum_{i=1}^N
\sup_{\xi\in\Xi}\big(g_t(f,\xi)-\kappa\|\xi-\xi^i\|_2\big)
\right\}.
\]
Combining the last two displays and minimizing over $t$ proves \eqref{eq:dro_dual_ub}.
Finally, if (i) the minimax interchange holds for the epigraph form and (ii) the piecewise-concave
condition of \citep[Assumption~4.1]{mohajerin_esfahani_kuhn_2017} holds for $g_t(f,\cdot)$ on a convex
closed $\Xi$, then the max--min inequality and the Wasserstein dual inequality are both tight, so
\eqref{eq:dro_dual_ub} holds with equality. Introducing epigraph variables $s_i$ converts the samplewise
suprema into the semi-infinite constraints in \eqref{eq:dro_sip}.

\section{Pseudocode for Benders Decomposition for 1-Wasserstein TSUE-DRO}
\label{app:pseudocode_dro}

\begin{algorithm}[htbp]
\caption{Benders-type Exchange (Cutting-Plane) Algorithm for $1$-Wasserstein TSUE--DRO}
\label{alg:tsue_benders}
\scriptsize
\setlength{\abovedisplayskip}{2pt}
\setlength{\belowdisplayskip}{2pt}
\setlength{\abovedisplayshortskip}{1pt}
\setlength{\belowdisplayshortskip}{1pt}

\begin{algorithmic}[1]
\Require empirical samples $\{\xi^1,\dots,\xi^N\}$ (center of $P_0$), support set $\Xi$ (or a set of scenarios),
radius $\rho$, parameters $(\alpha,\lambda)$ with $\lambda\le\alpha$, tolerances $\varepsilon_{\mathrm{sep}},\varepsilon_{\mathrm{gap}}$
\Ensure approximate optimal $(f^\star,t^\star,\kappa^\star)$ and corresponding objective value

\State $\bar\lambda \gets \bar\lambda(\alpha,\lambda)=\dfrac{\alpha-\lambda}{1+\alpha-2\lambda}$;\quad
$g_t(f,\xi)\ :=\ (1-\bar\lambda)\,Z(f,\xi)\;+\;\frac{\bar\lambda}{1-\alpha}\bigl(Z(f,\xi)-t\bigr)_+ \text{(cf.\ \eqref{eq:gt_def})}$.
\State $\mathcal X_i^{(0)}\gets\{\xi^i\}$ for all $i=1,\dots,N$;\quad $\mathtt{LB}\gets -\infty$,\ $\mathtt{UB}\gets +\infty$,\ $k\gets 0$

\While{$\mathtt{UB}-\mathtt{LB}>\varepsilon_{\mathrm{gap}}$}
    \State \textbf{(RMP) Solve the restricted main problem} with current cut sets $\{\mathcal X_i^{(k)}\}$:
    \vspace{-0.5ex}
    \begin{equation*}
    \begin{aligned}
    \min_{f,t,\kappa,\{s_i\},\{u_{i,\zeta}\}}~~
    & \bar\lambda\,t \;+\; \rho\,\kappa \;+\; \frac{1}{N}\sum_{i=1}^N s_i\\
    \text{s.t.}~~
    & s_i \ \ge\ (1-\bar\lambda)\,Z(f,\zeta)\;+\;\frac{\bar\lambda}{1-\alpha}\,u_{i,\zeta}\;-\;\kappa\|\zeta-\xi^i\|_2, \quad u_{i,\zeta}\ \ge\ Z(f,\zeta)-t, \quad \kappa\ge 0,\quad \text{\eqref{eq:demand_conservation}--\eqref{eq:nennegative}}. \quad u_{i,\zeta}\ge 0, \quad \forall i,\ \forall \zeta\in\mathcal X_i^{(k)}\\
    \end{aligned}
    \end{equation*}
    \vspace{-0.5ex}
    \State Let $(f^{k},t^{k},\kappa^{k},s^{k})$ be the optimal solution and set
    $\mathtt{LB}\gets \bar\lambda\,t^{k}+\rho\,\kappa^{k}+\frac{1}{N}\sum_{i=1}^N s_i^{k}$

    \State \textbf{(SEP) Separation oracle / violated-cut search:}
    $\Delta_{\max}\gets 0$;\quad $\widehat s_i\gets s_i^{k}$ for all $i$
    \For{$i=1$ \textbf{to} $N$}
        \State Solve the separation problem (exactly or approximately):
        \vspace{-0.5ex}
        $$
        \widehat s_i \ \gets\ \sup_{\xi\in\Xi}\Big\{g_{t^{k}}(f^{k},\xi)-\kappa^{k}\|\xi-\xi^{i}\|_2\Big\}, \xi_i^\star \in \arg\max_{\xi\in\Xi}\Big\{g_{t^{k}}(f^{k},\xi)-\kappa^{k}\|\xi-\xi^{i}\|_2\Big\}.
        $$
        \vspace{-0.5ex}
        \State Compute violation $\Delta_i\gets \widehat s_i - s_i^{k}$ and update $\Delta_{\max}\gets\max\{\Delta_{\max},\Delta_i\}$
        \If{$\Delta_i>\varepsilon_{\mathrm{sep}}$}
            \State Add the violated cut by updating $\mathcal X_i^{(k+1)}\gets \mathcal X_i^{(k)}\cup\{\xi_i^\star\}$
        \Else
            \State Do not add any violated cuts and keep $\mathcal X_i^{(k+1)}\gets \mathcal X_i^{(k)}$
        \EndIf
    \EndFor

    \State \textbf{(UB update)} Form a feasible objective value for the original semi-infinite problem by setting $s_i=\widehat s_i$:
    \vspace{-0.5ex}
    \[
    \mathtt{UB}\ \gets\ \min\left\{\mathtt{UB},\ \bar\lambda\,t^{k}+\rho\,\kappa^{k}+\frac{1}{N}\sum_{i=1}^N \widehat s_i\right\}.
    \]
    \vspace{-0.5ex}

    \If{$\Delta_{\max}\le \varepsilon_{\mathrm{sep}}$}
        \State \textbf{break} \Comment{No violated cuts: $(f^{k},t^{k},\kappa^{k})$ is feasible and optimal (up to tolerance).}
    \EndIf
    \State $k\gets k+1$
\EndWhile

\State \Return $(f^\star,t^\star,\kappa^\star)\gets (f^{k},t^{k},\kappa^{k})$ and objective $\mathtt{UB}$
\end{algorithmic}
\end{algorithm}

\section{Accelerating the Benders-type exchange method for $1$-Wasserstein TSUE--DRO}
\label{app:benders_speedups}

This appendix summarizes practical accelerations for the cutting-plane method used to solve the exact semi-infinite reformulation \eqref{eq:dro_sip} (Algorithm~\ref{alg:tsue_benders}). These enhancements reduce the main problem size, separation cost, and iteration count while preserving correctness up to prescribed tolerances.

In the main problem, the hinge $(Z(f,\zeta)-t)_+$ does not depend on the sample index $i$. Hence, for each cut point $\zeta$ we use a shared slack variable $u_{\zeta}$ (instead of $u_{i,\zeta}$):
\[
u_{\zeta}\ge Z(f,\zeta)-t,\quad u_{\zeta}\ge 0,\qquad \forall \zeta\in \mathcal X^{(r)}:=\cup_{i=1}^N\mathcal X_i^{(r)}.
\]
Enforcing this for all $i$ and $\zeta\in\mathcal X_i^{(r)}$ leads to:
$
s_i \ge (1-\bar\lambda)\,Z(f,\zeta)+\frac{\bar\lambda}{1-\alpha}\,u_{\zeta}-\kappa\|\zeta-\xi^i\|_2.
$
This removes an $\mathcal O(N)$ increase in auxiliary variables/constraints which can often yield a significant speedup.

Instead of solving the separation problem \eqref{eq:separation_subproblem} for all $i=1,\ldots,N$ every
iteration, we may evaluate it only for a subset $I^{(r)}$ (e.g., Top-$K$ indices with largest $s_i^{(r)}$ or largest $Z(f^{(r)},\xi^i)$), add violated cuts for $i\in I^{(r)}$, and run a full audit (considering all $i$) periodically, as in every $T$ iterations. This terminates only when a full audit detects no violation. Since separations are independent across $i$ (and across regimes in structured variants), they can be parallelized.

If $Z(f,\cdot)$ is $L_Z(f)$-Lipschitz on $\Xi$ under the $\|\cdot\|_2$ norm, then
$g_t(f,\cdot)$ is $L_g(f)$-Lipschitz with
\[
L_g(f)=\Bigl((1-\bar\lambda)+\frac{\bar\lambda}{1-\alpha}\Bigr)L_Z(f).
\]
Let $D_i$ be a bound on $\sup_{\xi\in\Xi}\|\xi-\xi^i\|_2$. Then
\[
\sup_{\xi\in\Xi}\big(g_t(f,\xi)-\kappa\|\xi-\xi^i\|_2\big)
\le g_t(f,\xi^i)+\max\{L_g(f)-\kappa,0\}D_i.
\]
If the right-hand side minus $s_i^{(r)}$ is $\le\varepsilon_{\mathrm{sep}}$, the separation for this $i$ can be safely skipped.

To reduce iterations, use stabilized cutting planes (e.g., a proximal term $\frac{\sigma}{2}\|f-f^{(r-1)}\|_2^2$ or a trust region) in the main problem. We only add cuts with violation $>\varepsilon_{\mathrm{sep}}$, we periodically prune persistently inactive
cuts, and we track an incumbent, best feasible upper-bound solution. When solving multiple instances (e.g., different $\rho$ or $(\alpha,\lambda)$), we can warm-start using both the previous solution and the cut sets. We note that continuation in $\rho$ is particularly effective.

If it is impossible or intractable to maximize exactly \eqref{eq:separation_subproblem} (e.g., if using black-box hazard simulators), we instead can use heuristic separation to generate candidate maximizers and add the necessary cuts. The approach with heuristic separation still terminates, because we can compute valid upper bounds on the separation value during full audits.

In practice, combining shared hinge slacks, implementing partial or parallel separation, and adding extra stabilization provides substantial acceleration while maintaining the exact semi-infinite TSUE-DRO formulation.

\section{Proof of Proposition~\ref{prop:regime_decomp}}
\label{app:regime_de}

For any $Q\in\mathcal D_{\mathrm{str}}$, the law of total expectation gives $\mathbb E_Q[h(\xi)]=\sum_{w}Q(W=w)\,\mathbb E_Q[h(\xi)\mid W=w] =\sum_w p_w\,\mathbb E_{Q^{w}}[h(\xi)]$. Since the feasibility constraints on $\{Q^{w}\}$ are independent across $w$ (each $Q^{w}\in\mathcal D^{w}$), maximizing over $Q\in\mathcal D_{\mathrm{str}}$ is equivalent to maximizing each regime-wise term separately, which yields \eqref{eq:lem_decomp}.

\section{Build regime-wise empirical distribution for piecewise-stationary example}
\label{app:build_non_stationary_example}

We let $\xi\in\mathbb R^6$ encode nonnegative perturbations to link intercepts and slopes: $\xi=(\Delta a_1,\Delta b_1,\Delta a_2,\Delta b_2,\Delta a_3,\Delta b_3) \ge 0$. Perturbed regime-$w$ link travel times are defined by $t_i(v_i;\xi,w)=(a_i^w+\Delta a_i)+(b_i^w+\Delta b_i)v_i$. The regime label $W=w$ captures structural shifts (NR vs. HR vs. FL), while $\xi$ captures sampling variability and unmodeled covariates within each regime.

We generate regime-wise samples $\{\xi^{w,i}\}_{i=1}^{S_w}$ with deliberately imbalanced sample sizes: $S_{\mathrm{NR}}=80$, $S_{\mathrm{HR}}=15$, and $S_{\mathrm{FL}}=2$, reflecting the typical scarcity of flood observations. For each regime $w$, perturbations are sampled independently using Gamma distributions to preserve nonnegativity, with shape $k=2$ for $\Delta a$ and $k=3$ for $\Delta b$: $\Delta a_i^{w}\sim \mathrm{Gamma}(k=2, \text{mean}=\mu_{a,i}^w)$ and $\Delta b_i^{w}\sim \mathrm{Gamma}(k=3, \text{mean}=\mu_{b,i}^w)$, where the regime-specific means are: $\mu_a^{\mathrm{NR}}=(0.25,0.30,0.30), \mu_b^{\mathrm{NR}}=(0.010,0.010,0.010), \mu_a^{\mathrm{HR}}=(0.60,0.75,0.85), \mu_b^{\mathrm{HR}}=(0.025,0.020,0.020), \mu_a^{\mathrm{FL}}=(8.00,2.50,2.00), \mu_b^{\mathrm{FL}}=(0.060,0.030,0.030)$. The regime-wise empirical distributions are then $P_0^{w}=\frac{1}{S_w}\sum_{i=1}^{S_w}\delta_{\xi^{w,i}}$ for $w\in\mathcal W_F$.

We add robustness to each regime-conditional law separately using regime-dependent Wasserstein radii. With base radius $\rho_0=3.2$ as the before setting and scaling $\rho_w=\rho_0\sqrt{S_{\mathrm{NR}}/S_w}$ (larger radius for smaller subsamples), we obtain $\rho_{\mathrm{NR}}=3.2$, $\rho_{\mathrm{HR}}\approx 7.390$, and $\rho_{\mathrm{FL}}\approx 20.239$. We define $\mathcal D^{w}=\{Q^{w}:W_1(Q^{w},P_0^{w})\le \rho_w\}$. The structured ambiguity set over $(W,\xi)$ fixes regime frequencies but adds robustness to regime-conditionals in equation~\ref{eq:struct_amb_set}.

In the finite-sample scenario approximation, this construction implies that each sample $\xi^{w,i}$ carries weight $p_w/S_w$ in the robust objective. The regime structure is preserved and flood observations are not diluted by pooling.

\section{Perception depth and safe speed calculation}
\label{app:scenario_build}

To estimate the flood depth each road link may experience during extreme rainfall in Downtown Chicago, we couple remotely sensed precipitation with a terrain-based inundation workflow. Daily CHIRPS v2.0 rainfall (0.05$^\circ$) \citep{funk_peterson_landsfeld_pedreros_verdin_shukla_husak_rowland_harrison_hoell_et_al._2015} is retrieved in Google Earth Engine and converted to event runoff \citep{Dewitz_USGS_2021_NLCD2019,SSURGO_NRCS_USDA}. We apply runoff as spatially uniform lateral inflow in a 2D HEC-RAS simulation \citep{USACE2023_HECRAS2D}. The simulated water-surface raster is converted to flood depth using the Floodwater Depth Estimation Tool \citep{cohen_raney_munasinghe_loftis_molthan_bell_rogers_galantowicz_brakenridge_kettner_et_al._2019}, and depths are sampled at road-intersection nodes to generate the values used in our numerical experiments. Precipitation depth is shown in the left panel of Figure~\ref{fig:depth_speed}.

Highway design manuals define the minimum stopping sight distance as $S = v t_r + \frac{v^2}{2 a_0}$, where $v t_r$ is the perception-reaction traverse and $v^2/(2a_0)$ is the braking traverse; $a_0=g\mu_0$ is the deceleration achievable on dry pavement. The most widely used manual adopts $t_r=2.5\,\mathrm{s}$ for design, covering about $90\%$ of observed reaction times on urban facilities. We therefore model rainfall effects by adjusting speed, not the reaction-time component.

Skid-pad experiments \citep{kulakowski_harwood_1990} show that tire-pavement friction declines approximately exponentially with water-film depth $h$: $\mu(h) = \mu_0 e^{-\beta h}$, with $\beta \approx 0.05\,\mathrm{mm}^{-1}$ for passenger-car tires on asphalt. The FHWA Guide for Pavement Friction corroborates this exponential trend and the sharp friction loss once $h>0.5\,\mathrm{mm}$ at typical traffic speeds \citep{FHWA_RC_20_0009}.

Because the longitudinal braking force is limited by $\mu(h)$, the maximum usable deceleration becomes $a(h)=g \mu_0 e^{-\beta h}$. Enforcing the constant distance constraint
\begin{equation}
S=v(h) t_r+\frac{v(h)^2}{2 a(h)}
\end{equation}
and solving the resulting quadratic yields
\begin{equation}
v(h)=-a(h) t_r+\sqrt{a(h)^2 t_r^2+2 a(h) S} .
\end{equation}

This expression collapses to the design speed when $h=0$ and trends to zero as $h$ approaches the hydroplaning regime, providing a continuous, physically interpretable speed cap.

To determine the speed adjustment required under flooded conditions, we hold the dry road's stopping sight distance $S$ constant and evaluate the safe speed across varying water film depths $h$. For this simulation we adopt $\mu_0=0.55, \beta=0.05 \mathrm{~mm}^{-1}$, and a perception reaction time $t_r=1.5 \mathrm{~s}$.

Based on the precipitation depth, we can calculate the speed with $v(h)$ in the right panel of the Figure \ref{fig:depth_speed} when the speed is 30 mph for no precipitation scenario.

\begin{figure}[htbp]
  \centering
  \includegraphics[width= 0.45\textwidth]{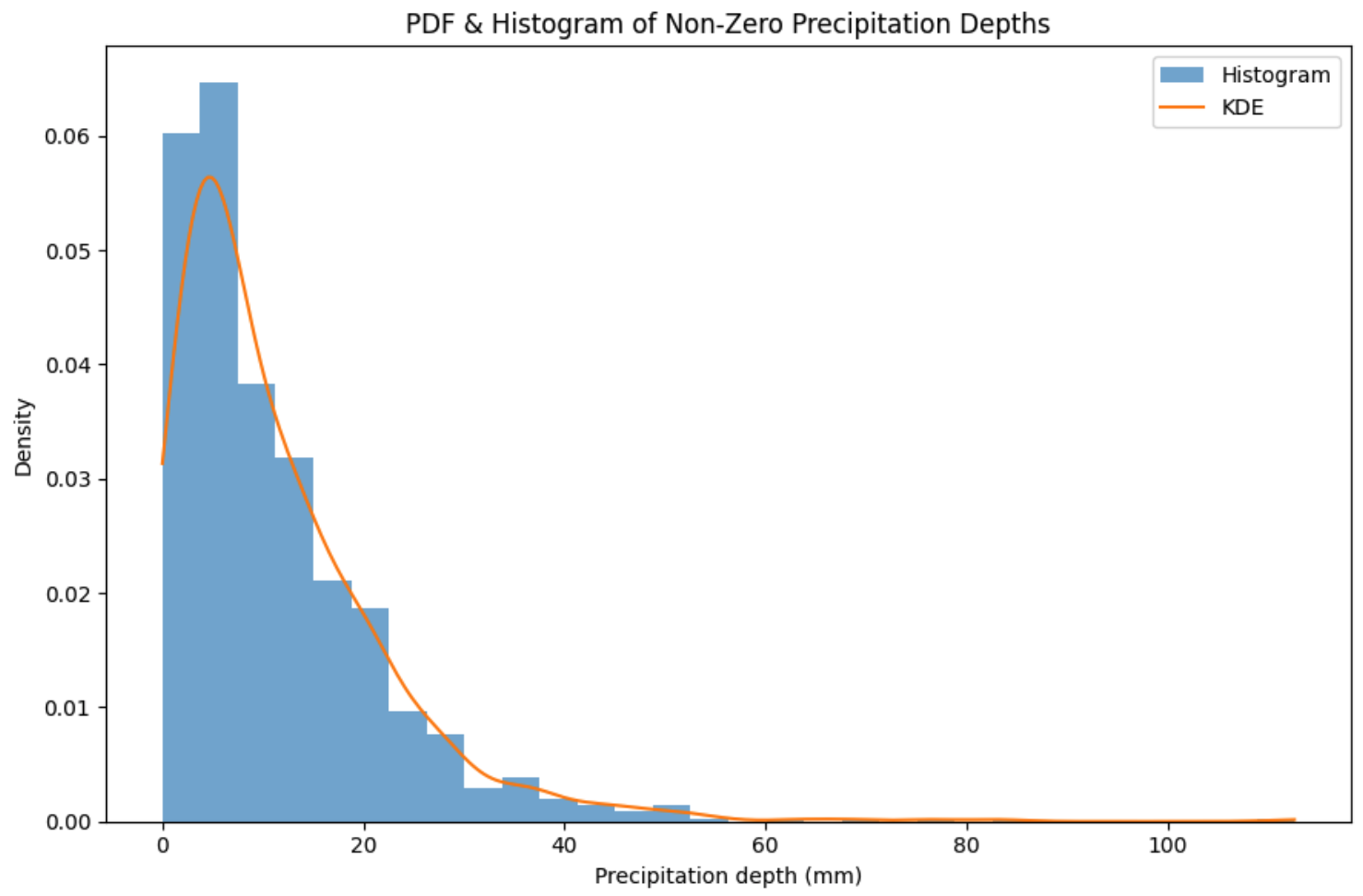}
  \includegraphics[width= 0.45\textwidth]{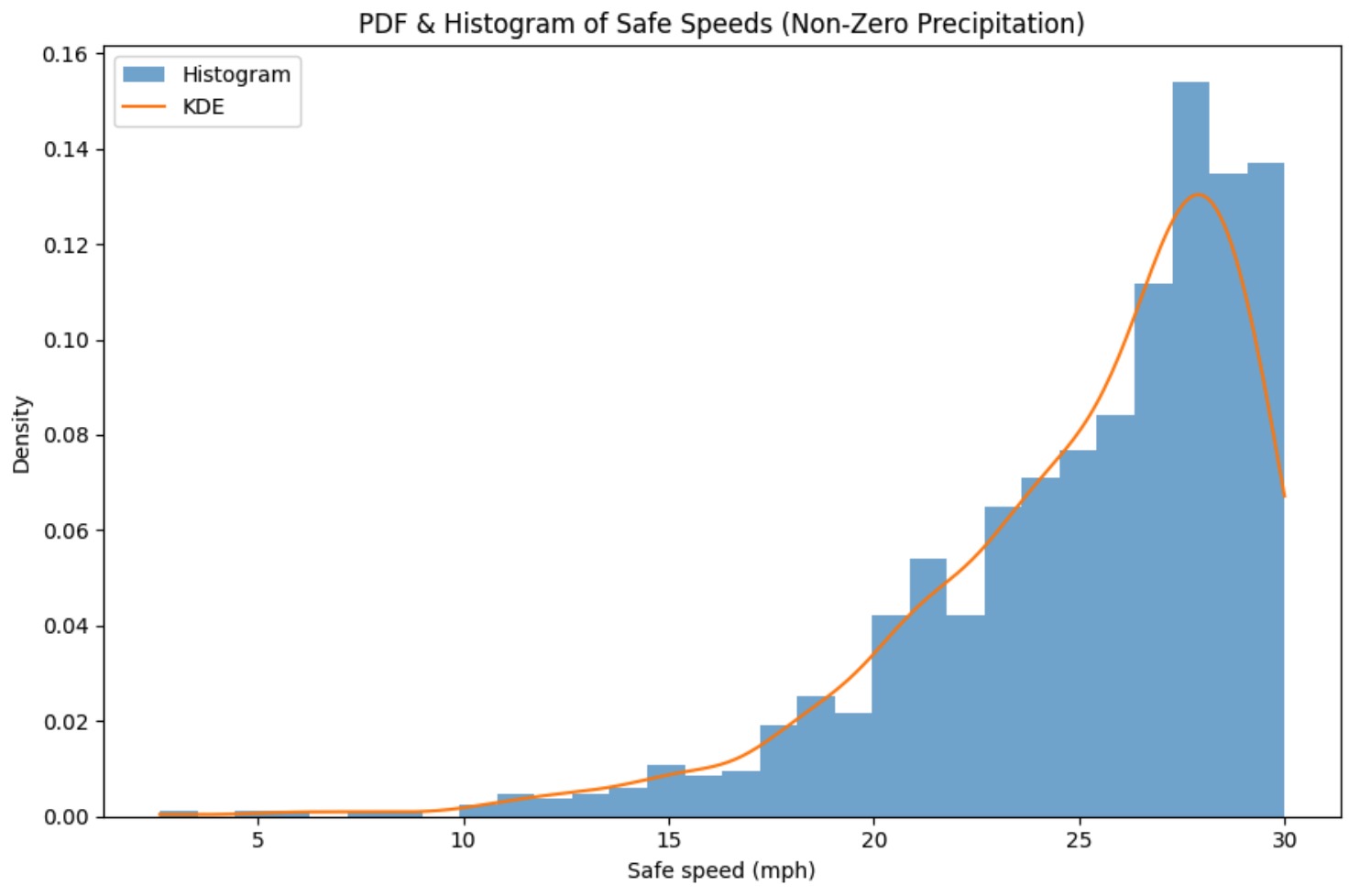}
  \caption{Precipitation Depth and Safe Speed}
  \label{fig:depth_speed}
\end{figure}

To mirror field variability more realistically, we perturb each greater than zero precipitation depth with a two component random noise model before recomputing safe speeds for every road link. First, a multiplicative factor $m \sim \mathcal{N}\left(1,0.5^2\right)$
scales each depth up or down by roughly $\pm 100 \%$ (two standard deviations), acknowledging that adjacent pavement sections rarely collect identical runoff and that depth estimates inherit proportional hydrologic model error. Second, an additive term $a \sim \mathcal{N}\left(5,2.00^2 \mathrm{~mm}^2\right)$
shifts the value by about a millimeter, emulating sensor bias and random splash or spray noise that are independent of storm intensity. The combination is applied only when the original depth exceeds zero, and negative results are clipped to preserve physical feasibility:
\begin{equation}
d_{\text {noisy }}=\max \left(0, d_{\text {base }} \times m+a\right)
\end{equation}
We execute this resampling independently for each of the 24 links, yielding distinct depth and thus safe speed distributions that capture spatial heterogeneity. A fixed random number seed (2025) freezes the perturbations, guaranteeing that every rerun of the experiment is exactly reproducible while still reflecting plausible measurement variability.

\begin{figure}[htbp]
  \centering
  \includegraphics[width= 0.7\textwidth]{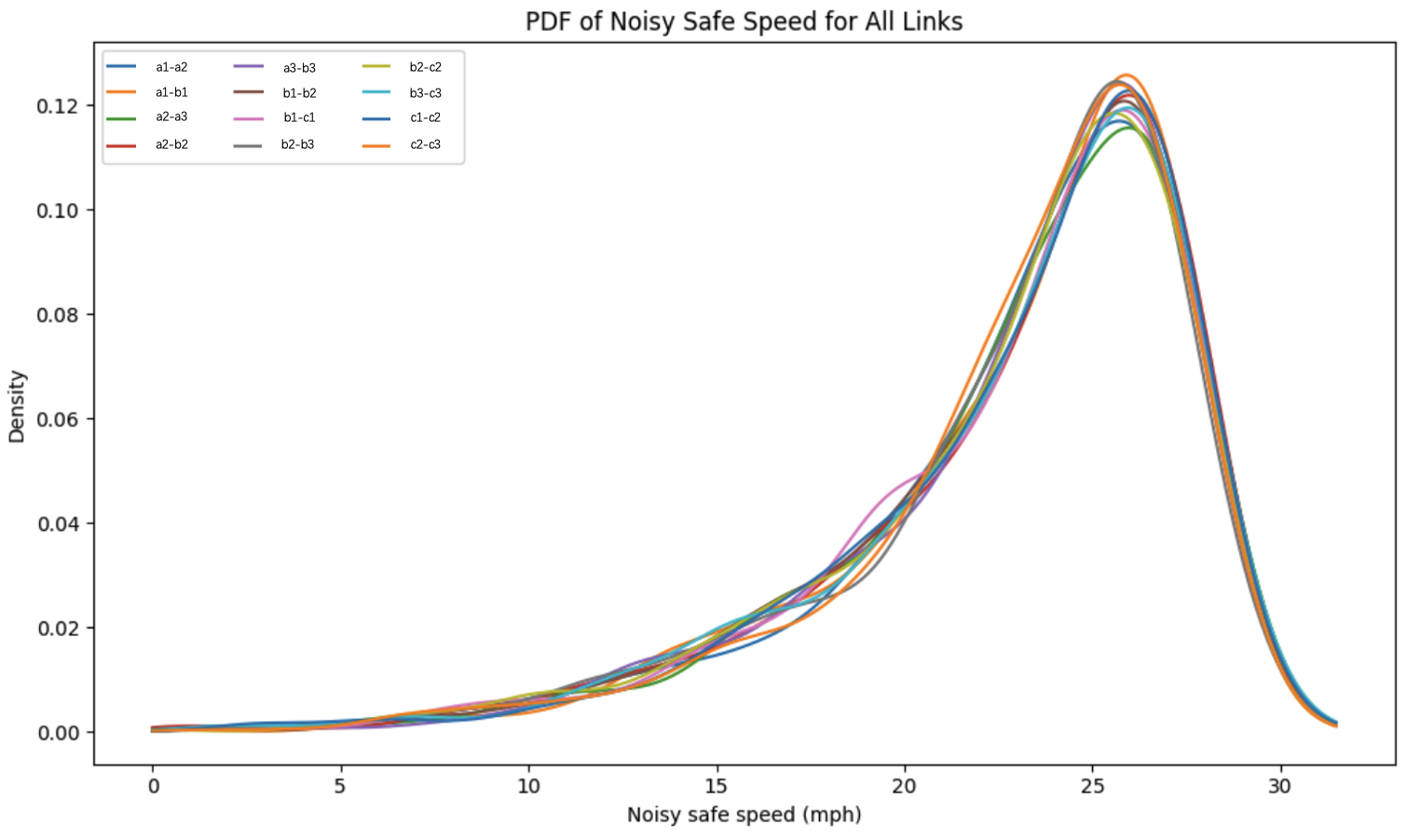}
  \caption{Probability Density Function of the Noisy Safety Speed for All Links}
  \label{fig:pdf_speed}
\end{figure}

We applied a 5 cluster K-means model to the speed values. After clustering, we rank the clusters from slowest to fastest by sorting their centroids, translating those ranks into qualitative traffic flows (Critical, Severe, Significant, Moderate, Minor), and for every cluster we record descriptive statistics: centroid speed, the number of observations, and the cluster’s empirical appearance probability on that link. The result has shown in the table \ref{tab:severity_compact_all}.

\begin{table}[htbp]    
\tiny
\centering
\setlength{\tabcolsep}{3pt}
\renewcommand{\arraystretch}{1.1}
\begin{tabular}{l | ccc | ccc | ccc | ccc | ccc}
\toprule
\multirow{2}{*}{\textbf{Link}} &
\multicolumn{3}{c|}{\textbf{Minor}} &
\multicolumn{3}{c|}{\textbf{Moderate}} &
\multicolumn{3}{c|}{\textbf{Significant}} &
\multicolumn{3}{c|}{\textbf{Severe}} &
\multicolumn{3}{c}{\textbf{Critical}} \\[-0.3em]
 & L & Spd & Pr & L & Spd & Pr & L & Spd & Pr & L & Spd & Pr & L & Spd & Pr \\
\midrule
a1 $\rightarrow$ a2 & 2 & 27.20 & 0.3146 & 2 & 24.43 & 0.3289 & 2 & 21.02 & 0.2090 & 2 & 16.35 & 0.0990 & 1 &  9.86 & 0.0484 \\
a1 $\rightarrow$ b1 & 3 & 27.03 & 0.3374 & 3 & 24.38 & 0.3153 & 2 & 20.71 & 0.2062 & 2 & 15.59 & 0.1136 & 1 &  8.93 & 0.0276 \\
a2 $\rightarrow$ a1 & 2 & 27.20 & 0.3146 & 2 & 24.43 & 0.3289 & 2 & 21.02 & 0.2090 & 2 & 16.35 & 0.0990 & 1 &  9.86 & 0.0484 \\
a2 $\rightarrow$ a3 & 2 & 27.04 & 0.3601 & 2 & 24.05 & 0.3095 & 2 & 20.64 & 0.1839 & 2 & 16.31 & 0.1057 & 1 &  9.58 & 0.0407 \\
a2 $\rightarrow$ b2 & 3 & 27.00 & 0.3608 & 3 & 24.20 & 0.3080 & 2 & 20.32 & 0.2002 & 2 & 15.27 & 0.0946 & 1 &  8.57 & 0.0363 \\
a3 $\rightarrow$ a2 & 2 & 27.04 & 0.3601 & 2 & 24.05 & 0.3095 & 2 & 20.64 & 0.1839 & 2 & 16.31 & 0.1057 & 1 &  9.58 & 0.0407 \\
a3 $\rightarrow$ b3 & 3 & 26.99 & 0.3363 & 3 & 24.29 & 0.3219 & 2 & 20.94 & 0.1786 & 2 & 16.58 & 0.1158 & 1 & 10.56 & 0.0474 \\
b1 $\rightarrow$ a1 & 3 & 27.03 & 0.3374 & 3 & 24.38 & 0.3153 & 2 & 20.71 & 0.2062 & 2 & 15.59 & 0.1136 & 1 &  8.93 & 0.0276 \\
b1 $\rightarrow$ b2 & 2 & 27.21 & 0.2928 & 2 & 24.66 & 0.3238 & 2 & 21.19 & 0.2077 & 2 & 16.69 & 0.1271 & 1 & 10.10 & 0.0486 \\
b1 $\rightarrow$ c1 & 3 & 27.02 & 0.3502 & 3 & 24.10 & 0.3238 & 2 & 20.19 & 0.1949 & 2 & 15.63 & 0.0936 & 1 &  8.97 & 0.0374 \\
b2 $\rightarrow$ a2 & 3 & 27.00 & 0.3608 & 3 & 24.20 & 0.3080 & 2 & 20.32 & 0.2002 & 2 & 15.27 & 0.0946 & 1 &  8.57 & 0.0363 \\
b2 $\rightarrow$ b1 & 2 & 27.21 & 0.2928 & 2 & 24.66 & 0.3238 & 2 & 21.19 & 0.2077 & 2 & 16.69 & 0.1271 & 1 & 10.10 & 0.0486 \\
b2 $\rightarrow$ b3 & 3 & 27.04 & 0.2999 & 3 & 24.51 & 0.3319 & 2 & 21.21 & 0.1996 & 2 & 16.25 & 0.1191 & 1 &  9.56 & 0.0496 \\
b2 $\rightarrow$ c2 & 2 & 27.06 & 0.3168 & 2 & 24.30 & 0.3333 & 2 & 20.72 & 0.1914 & 2 & 16.00 & 0.1166 & 1 &  9.12 & 0.0418 \\
b3 $\rightarrow$ a3 & 3 & 26.99 & 0.3363 & 3 & 24.29 & 0.3219 & 2 & 20.94 & 0.1786 & 2 & 16.58 & 0.1158 & 1 & 10.56 & 0.0474 \\
b3 $\rightarrow$ b2 & 3 & 27.04 & 0.2999 & 3 & 24.51 & 0.3319 & 2 & 21.21 & 0.1996 & 2 & 16.25 & 0.1191 & 1 &  9.56 & 0.0496 \\
b3 $\rightarrow$ c3 & 2 & 27.07 & 0.3513 & 2 & 24.25 & 0.3084 & 2 & 20.53 & 0.1982 & 2 & 15.25 & 0.1134 & 1 &  7.85 & 0.0286 \\
c1 $\rightarrow$ b1 & 3 & 27.02 & 0.3502 & 3 & 24.10 & 0.3238 & 2 & 20.19 & 0.1949 & 2 & 15.63 & 0.0936 & 1 &  8.97 & 0.0374 \\
c1 $\rightarrow$ c2 & 3 & 26.99 & 0.3687 & 3 & 24.03 & 0.3201 & 2 & 20.24 & 0.1799 & 2 & 15.66 & 0.1026 & 1 &  7.89 & 0.0287 \\
c2 $\rightarrow$ b2 & 2 & 27.06 & 0.3168 & 2 & 24.30 & 0.3333 & 2 & 20.72 & 0.1914 & 2 & 16.00 & 0.1166 & 1 &  9.12 & 0.0418 \\
c2 $\rightarrow$ c1 & 3 & 26.99 & 0.3687 & 3 & 24.03 & 0.3201 & 2 & 20.24 & 0.1799 & 2 & 15.66 & 0.1026 & 1 &  7.89 & 0.0287 \\
c2 $\rightarrow$ c3 & 3 & 26.89 & 0.3604 & 3 & 24.10 & 0.3099 & 2 & 20.77 & 0.2033 & 2 & 15.77 & 0.0901 & 1 &  8.98 & 0.0363 \\
c3 $\rightarrow$ b3 & 2 & 27.07 & 0.3513 & 2 & 24.25 & 0.3084 & 2 & 20.53 & 0.1982 & 2 & 15.25 & 0.1134 & 1 &  7.85 & 0.0286 \\
c3 $\rightarrow$ c2 & 3 & 26.89 & 0.3604 & 3 & 24.10 & 0.3099 & 2 & 20.77 & 0.2033 & 2 & 15.77 & 0.0901 & 1 &  8.98 & 0.0363 \\
\bottomrule
\end{tabular}
\caption{Lane count (L), centroid speed (Spd), and appearance probability (Pr) for every network link under each severity scenario.}
\label{tab:severity_compact_all}
\end{table}

\newpage
\section{Route flows and link flow under TSUE-SP for Chicago, IL cases}
\label{app:table_sp_chicago}

\begin{table}[htbp]      
\caption{Route flows (veh/h) under TSUE-SP parameter settings. Column order is based on $(\alpha, \lambda)$ }
\tiny
\setlength{\tabcolsep}{3pt}
\renewcommand{\arraystretch}{1.05}
\begin{tabular}{l r l
                r r r r r r r r}
\toprule
\textbf{OD} & \textbf{ID} & \textbf{Path} &
$(0.4,0.3)$ & $(0.4,0.2)$ & $(0.4,0.1)$ &
$(0.5,0.3)$ & $(0.5,0.2)$ & $(0.6,0.3)$ &
$(0.8,0.3)$ & $(0.9,0.3)$\\
\midrule
c2 $\rightarrow$ a3 & 0 & c2 $\to$ c1 $\to$ b1 $\to$ b2 $\to$ b3 $\to$ a3 &  93.9 &  57.1 &  81.6 &  69.4 &  16.3 &   0.0 &   0.0 &   0.0 \\
                  & 1 & c2 $\to$ c1 $\to$ b1 $\to$ b2 $\to$ a2 $\to$ a3 &   0.0 &   0.0 &   0.0 &   0.0 &   0.0 &   0.0 &   0.0 &   0.0 \\
                  & 2 & c2 $\to$ c1 $\to$ b1 $\to$ a1 $\to$ a2 $\to$ a3 & 244.9 & 244.9 & 244.9 & 244.9 & 244.9 & 244.9 & 212.2 & 208.2 \\
                  & 3 & c2 $\to$ c3 $\to$ b3 $\to$ b2 $\to$ a2 $\to$ a3 &   0.0 &   0.0 &   0.0 &   0.0 &   0.0 &   0.0 &   0.0 &   0.0 \\
                  & 4 & c2 $\to$ c3 $\to$ b3 $\to$ a3                   & 734.7 & 734.7 & 734.7 & 734.7 & 775.5 & 734.7 & 734.7 & 734.7 \\
                  & 5 & c2 $\to$ b2 $\to$ b1 $\to$ a1 $\to$ a2 $\to$ a3 &   0.0 &   0.0 &   0.0 &   0.0 &   0.0 &   0.0 &   0.0 &   0.0 \\
                  & 6 & c2 $\to$ b2 $\to$ b3 $\to$ a3                   &2449.0 &2473.5 &2449.0 &2461.2 &2473.5 &2530.6 &2449.0 &2449.0 \\
                  & 7 & c2 $\to$ b2 $\to$ a2 $\to$ a3                   & 477.6 & 489.8 & 489.8 & 489.8 & 489.8 & 489.8 & 604.1 & 608.2 \\
\addlinespace[0.3em]
c3 $\rightarrow$ a3 & 8 & c3 $\to$ c2 $\to$ b2 $\to$ b3 $\to$ a3          &   0.0 &   0.0 &   0.0 &   0.0 &   0.0 &   0.0 &   0.0 &   0.0 \\
                  & 9 & c3 $\to$ c2 $\to$ b2 $\to$ a2 $\to$ a3          &   0.0 &   0.0 &   0.0 &   0.0 &   0.0 &   0.0 &   0.0 &   0.0 \\
                  &10 & c3 $\to$ b3 $\to$ b2 $\to$ a2 $\to$ a3          &   0.0 &   0.0 &   0.0 &   0.0 &   0.0 &   0.0 &   0.0 &   0.0 \\
                  &11 & c3 $\to$ b3 $\to$ a3                            &4000.0 &4000.0 &4000.0 &4000.0 &4000.0 &4000.0 &4000.0 &4000.0 \\
\addlinespace[0.3em]
c1 $\rightarrow$ a3 &12 & c1 $\to$ c2 $\to$ c3 $\to$ b3 $\to$ a3          & 244.9 & 244.9 & 244.9 & 244.9 & 244.9 & 244.9 & 244.9 & 244.9 \\
                  &13 & c1 $\to$ c2 $\to$ b2 $\to$ b3 $\to$ a3          & 551.0 & 514.3 & 514.3 & 489.8 & 489.8 & 489.8 & 432.7 & 432.7 \\
                  &14 & c1 $\to$ c2 $\to$ b2 $\to$ a2 $\to$ a3          & 244.9 & 244.9 & 244.9 & 244.9 & 191.8 & 134.7 &   4.1 &   0.0 \\
                  &15 & c1 $\to$ b1 $\to$ b2 $\to$ b3 $\to$ a3          & 979.6 & 979.6 & 979.6 & 979.6 & 979.6 & 979.6 &1114.3 &1114.3 \\
                  &16 & c1 $\to$ b1 $\to$ b2 $\to$ a2 $\to$ a3          & 244.9 & 244.9 & 244.9 & 244.9 & 244.9 & 244.9 & 244.9 & 244.9 \\
                  &17 & c1 $\to$ b1 $\to$ a1 $\to$ a2 $\to$ a3          &1734.7 &1771.4 &1771.4 &1795.9 &1849.0 &1906.1 &1959.2 &1963.3 \\
\bottomrule
\end{tabular}
\label{tab:tsue_sp_route_flows_ext}
\end{table}

\begin{table}[htbp]
\centering
\captionsetup{justification=centering}
\caption{Link flows (veh/h) for TSUE-SP parameter pairs  $(\alpha, \lambda)$}
\label{tab:link_flows_alpha_lambda_split}

\begin{subtable}[t]{\linewidth}
\centering
\tiny
\setlength{\tabcolsep}{3pt}
\renewcommand{\arraystretch}{1.1}
\begin{tabular}{l
  r r r r r r
  r r r r r r}
\toprule
& a1$\!\rightarrow\!$a2 & a1$\!\rightarrow\!$b1 & a2$\!\rightarrow\!$a1 &
  a2$\!\rightarrow\!$a3 & a2$\!\rightarrow\!$b2 & a3$\!\rightarrow\!$a2 &
  a3$\!\rightarrow\!$b3 & b1$\!\rightarrow\!$b2 & b1$\!\rightarrow\!$a1 &
  b1$\!\rightarrow\!$c1 & b2$\!\rightarrow\!$b1 & b2$\!\rightarrow\!$b3 \\
\midrule
(0.4, 0.3) & 1979.6 & 0.0 & 0.0 & 2946.9 & 0.0 & 0.0 & 0.0 & 1318.4 & 1979.6 & 0.0 & 0.0 & 4073.5 \\
(0.4, 0.2) & 2016.3 & 0.0 & 0.0 & 2995.9 & 0.0 & 0.0 & 0.0 & 1281.6 & 2016.3 & 0.0 & 0.0 & 4024.5 \\
(0.4, 0.1) & 2016.3 & 0.0 & 0.0 & 2995.9 & 0.0 & 0.0 & 0.0 & 1306.1 & 2016.3 & 0.0 & 0.0 & 4024.5 \\
(0.5, 0.3) & 2040.8 & 0.0 & 0.0 & 3020.4 & 0.0 & 0.0 & 0.0 & 1293.9 & 2040.8 & 0.0 & 0.0 & 4000.0 \\
(0.5, 0.2) & 2093.9 & 0.0 & 0.0 & 3020.4 & 0.0 & 0.0 & 0.0 & 1240.8 & 2093.9 & 0.0 & 0.0 & 3959.2 \\
(0.6, 0.3) & 2151.0 & 0.0 & 0.0 & 3020.4 & 0.0 & 0.0 & 0.0 & 1224.5 & 2151.0 & 0.0 & 0.0 & 4000.0 \\
(0.8, 0.3) & 2171.4 & 0.0 & 0.0 & 3024.5 & 0.0 & 0.0 & 0.0 & 1359.2 & 2171.4 & 0.0 & 0.0 & 3995.9 \\
(0.9, 0.3) & 2171.4 & 0.0 & 0.0 & 3024.5 & 0.0 & 0.0 & 0.0 & 1359.2 & 2171.4 & 0.0 & 0.0 & 3995.9 \\
\bottomrule
\end{tabular}
\end{subtable}

\vspace{1.2em}

\begin{subtable}[t]{\linewidth}
\centering
\tiny
\setlength{\tabcolsep}{3pt}
\renewcommand{\arraystretch}{1.1}
\begin{tabular}{l
  r r r r r r
  r r r r r r}
\toprule
& b2$\!\rightarrow\!$a2 & b2$\!\rightarrow\!$c2 & b3$\!\rightarrow\!$b2 &
  b3$\!\rightarrow\!$a3 & b3$\!\rightarrow\!$c3 & c1$\!\rightarrow\!$c2 &
  c1$\!\rightarrow\!$b1 & c2$\!\rightarrow\!$c1 & c2$\!\rightarrow\!$c3 &
  c2$\!\rightarrow\!$b2 & c3$\!\rightarrow\!$c2 & c3$\!\rightarrow\!$b3 \\
\midrule
(0.4, 0.3) &  967.3 & 0.0 & 0.0 & 9053.1 & 0.0 & 1040.8 & 3298.0 & 338.8 & 979.6 & 3722.4 & 0.0 & 4979.6 \\
(0.4, 0.2) &  979.6 & 0.0 & 0.0 & 9004.1 & 0.0 & 1004.1 & 3298.0 & 302.0 & 979.6 & 3722.4 & 0.0 & 4979.6 \\
(0.4, 0.1) &  979.6 & 0.0 & 0.0 & 9004.1 & 0.0 & 1004.1 & 3322.4 & 326.5 & 979.6 & 3698.0 & 0.0 & 4979.6 \\
(0.5, 0.3) &  979.6 & 0.0 & 0.0 & 8979.6 & 0.0 &  979.6 & 3334.7 & 314.3 & 979.6 & 3685.7 & 0.0 & 4979.6 \\
(0.5, 0.2) &  926.5 & 0.0 & 0.0 & 8979.6 & 0.0 &  926.5 & 3334.7 & 261.2 &1020.4 & 3644.9 & 0.0 & 5020.4 \\
(0.6, 0.3) &  869.4 & 0.0 & 0.0 & 8979.6 & 0.0 &  869.4 & 3375.5 & 244.9 & 979.6 & 3644.9 & 0.0 & 4979.6 \\
(0.8, 0.3) &  853.1 & 0.0 & 0.0 & 8975.5 & 0.0 &  681.6 & 3530.6 & 212.2 & 979.6 & 3489.8 & 0.0 & 4979.6 \\
(0.9, 0.3) &  853.1 & 0.0 & 0.0 & 8975.5 & 0.0 &  677.6 & 3530.6 & 208.2 & 979.6 & 3489.8 & 0.0 & 4979.6 \\
\bottomrule
\end{tabular}
\end{subtable}
\end{table}

\newpage

\section{Route flows and link flow under TSUE-DRO for Chicago cases}
\label{app:table_dro_chicago}
\begin{table}[htbp]
\tiny
\setlength{\tabcolsep}{3pt}
\renewcommand{\arraystretch}{1.05}
\begin{tabular}{l r l
                r r r r r r r r}
\toprule
\textbf{OD} & \textbf{ID} & \textbf{Path} &
$(0.4,0.3)$ & $(0.4,0.2)$ & $(0.4,0.1)$ &
$(0.5,0.3)$ & $(0.5,0.2)$ & $(0.6,0.3)$ &
$(0.8,0.3)$ & $(0.9,0.3)$\\
\midrule
c2 $\rightarrow$ a3 & 0 & c2 $\to$ c1 $\to$ b1 $\to$ b2 $\to$ b3 $\to$ a3 &  78.5 & 103.6 &  80.1 &  93.4 &  70.6 &  76.7 &  96.9 &  93.1 \\
                  & 1 & c2 $\to$ c1 $\to$ b1 $\to$ b2 $\to$ a2 $\to$ a3 &  31.0 &  57.1 &  42.7 &  50.6 &  41.0 &  52.4 &  51.5 &  51.1 \\
                  & 2 & c2 $\to$ c1 $\to$ b1 $\to$ a1 $\to$ a2 $\to$ a3 & 443.3 & 427.1 & 450.6 & 418.6 & 397.1 & 420.6 & 411.9 & 407.6 \\
                  & 3 & c2 $\to$ c3 $\to$ b3 $\to$ b2 $\to$ a2 $\to$ a3 &   0.0 &   0.0 &   0.0 &   0.0 &   0.0 &   0.0 &   0.0 &   0.0 \\
                  & 4 & c2 $\to$ c3 $\to$ b3 $\to$ a3                   & 394.8 & 382.9 & 380.4 & 371.3 & 372.8 & 335.5 & 355.3 & 358.2 \\
                  & 5 & c2 $\to$ b2 $\to$ b1 $\to$ a1 $\to$ a2 $\to$ a3 &   0.0 &   0.0 &   0.0 &   0.0 &   0.0 &   0.0 &   0.0 &   0.0 \\
                  & 6 & c2 $\to$ b2 $\to$ b3 $\to$ a3                   &1897.6 &1917.3 &1972.4 &1894.9 &2017.3 &1939.4 &1990.2 &1970.3 \\
                  & 7 & c2 $\to$ b2 $\to$ a2 $\to$ a3                   &1154.8 &1112.0 &1073.7 &1171.1 &1101.2 &1175.4 &1094.2 &1119.6 \\
\addlinespace[0.3em]
c3 $\rightarrow$ a3 & 8 & c3 $\to$ c2 $\to$ b2 $\to$ b3 $\to$ a3          &  30.9 &  41.4 &  29.0 &  52.6 &  38.3 &  36.5 &  41.0 &  35.0 \\
                  & 9 & c3 $\to$ c2 $\to$ b2 $\to$ a2 $\to$ a3          &  32.3 &  12.2 &  16.6 &  18.6 &  18.6 &  21.9 &  17.9 &  19.9 \\
                  &10 & c3 $\to$ b3 $\to$ b2 $\to$ a2 $\to$ a3          &   2.0 &   4.6 &   1.7 &   3.6 &   6.9 &   1.3 &   2.4 &   3.0 \\
                  &11 & c3 $\to$ b3 $\to$ a3                            &3934.9 &3941.9 &3952.7 &3925.2 &3936.2 &3940.3 &3938.7 &3942.1 \\
\addlinespace[0.3em]
c1 $\rightarrow$ a3 &12 & c1 $\to$ c2 $\to$ c3 $\to$ b3 $\to$ a3          & 160.9 & 110.9 & 116.1 & 129.4 & 115.9 & 151.6 & 135.0 & 131.6 \\
                  &13 & c1 $\to$ c2 $\to$ b2 $\to$ b3 $\to$ a3          & 748.0 & 723.2 & 702.5 & 722.1 & 702.5 & 688.9 & 688.8 & 707.7 \\
                  &14 & c1 $\to$ c2 $\to$ b2 $\to$ a2 $\to$ a3          & 376.4 & 396.3 & 463.6 & 398.2 & 401.5 & 383.2 & 416.5 & 396.4 \\
                  &15 & c1 $\to$ b1 $\to$ b2 $\to$ b3 $\to$ a3          & 459.6 & 461.8 & 446.5 & 464.1 & 414.1 & 490.2 & 426.0 & 437.5 \\
                  &16 & c1 $\to$ b1 $\to$ b2 $\to$ a2 $\to$ a3          & 179.2 & 284.9 & 228.0 & 224.0 & 240.0 & 218.1 & 249.8 & 246.8 \\
                  &17 & c1 $\to$ b1 $\to$ a1 $\to$ a2 $\to$ a3          &2075.9 &2022.8 &2043.3 &2062.1 &2126.0 &2068.0 &2084.0 &2080.1 \\
\bottomrule
\end{tabular}
\caption{Feasible route flows (veh/h) under TSUE-DRO parameter settings. Columns are ordered by $(\alpha,\lambda)$.}
\label{tab:dro_route}
\end{table}

\begin{table}[htbp]
\centering
\captionsetup{justification=centering}
\caption{Link flows (veh/h) under TSUE-DRO parameter settings.}
\label{tab:link_flows_DRO}

\begin{subtable}[t]{\linewidth}
\centering
\tiny
\setlength{\tabcolsep}{4pt}
\renewcommand{\arraystretch}{1.1}
\begin{tabular}{l
  r r r r r r
  r r r r r r}
\toprule
& a1$\!\rightarrow\!$a2 & a1$\!\rightarrow\!$b1 & a2$\!\rightarrow\!$a1 &
  a2$\!\rightarrow\!$a3 & a2$\!\rightarrow\!$b2 & a3$\!\rightarrow\!$a2 &
  a3$\!\rightarrow\!$b3 & b1$\!\rightarrow\!$b2 & b1$\!\rightarrow\!$a1 &
  b1$\!\rightarrow\!$c1 & b2$\!\rightarrow\!$b1 & b2$\!\rightarrow\!$b3 \\
\midrule
(0.4,0.3) & 2519.2 & 0.0 & 0.0 & 4294.9 & 0.0 & 0.0 & 0.0 &  748.3 & 2519.2 & 0.0 & 0.0 & 3214.6 \\
(0.4,0.2) & 2450.0 & 0.0 & 0.0 & 4317.0 & 0.0 & 0.0 & 0.0 &  907.4 & 2450.0 & 0.0 & 0.0 & 3247.3 \\
(0.4,0.1) & 2494.0 & 0.0 & 0.0 & 4320.3 & 0.0 & 0.0 & 0.0 &  797.3 & 2494.0 & 0.0 & 0.0 & 3230.5 \\
(0.5,0.3) & 2480.7 & 0.0 & 0.0 & 4346.9 & 0.0 & 0.0 & 0.0 &  832.2 & 2480.7 & 0.0 & 0.0 & 3227.2 \\
(0.5,0.2) & 2523.1 & 0.0 & 0.0 & 4332.3 & 0.0 & 0.0 & 0.0 &  765.7 & 2523.1 & 0.0 & 0.0 & 3242.8 \\
(0.6,0.3) & 2488.6 & 0.0 & 0.0 & 4340.9 & 0.0 & 0.0 & 0.0 &  837.4 & 2488.6 & 0.0 & 0.0 & 3231.7 \\
(0.8,0.3) & 2495.9 & 0.0 & 0.0 & 4328.2 & 0.0 & 0.0 & 0.0 &  824.1 & 2495.9 & 0.0 & 0.0 & 3242.8 \\
(0.9,0.3) & 2487.7 & 0.0 & 0.0 & 4324.5 & 0.0 & 0.0 & 0.0 &  828.5 & 2487.7 & 0.0 & 0.0 & 3243.6 \\
\bottomrule
\end{tabular}
\end{subtable}

\vspace{1.2em}

\begin{subtable}[t]{\linewidth}
\centering
\tiny
\setlength{\tabcolsep}{4pt}
\renewcommand{\arraystretch}{1.1}
\begin{tabular}{l
  r r r r r r
  r r r r r r}
\toprule
& b2$\!\rightarrow\!$a2 & b2$\!\rightarrow\!$c2 & b3$\!\rightarrow\!$b2 &
  b3$\!\rightarrow\!$a3 & b3$\!\rightarrow\!$c3 & c1$\!\rightarrow\!$c2 &
  c1$\!\rightarrow\!$b1 & c2$\!\rightarrow\!$c1 & c2$\!\rightarrow\!$c3 &
  c2$\!\rightarrow\!$b2 & c3$\!\rightarrow\!$c2 & c3$\!\rightarrow\!$b3 \\
\midrule
(0.4,0.3) & 1775.6 & 0.0 &   2.0 & 7705.1 & 0.0 & 1285.3 & 3267.5 &  552.8 & 555.7 & 4239.9 &  63.2 & 4492.6 \\
(0.4,0.2) & 1867.1 & 0.0 &   4.6 & 7683.0 & 0.0 & 1230.4 & 3357.4 &  587.8 & 493.8 & 4202.4 &  53.6 & 4440.3 \\
(0.4,0.1) & 1826.3 & 0.0 &   1.7 & 7679.7 & 0.0 & 1282.2 & 3291.2 &  573.4 & 496.5 & 4257.9 &  45.6 & 4450.9 \\
(0.5,0.3) & 1866.2 & 0.0 &   3.6 & 7653.1 & 0.0 & 1249.8 & 3312.9 &  562.6 & 500.7 & 4257.6 &  71.2 & 4429.5 \\
(0.5,0.2) & 1809.1 & 0.0 &   6.9 & 7667.7 & 0.0 & 1219.9 & 3288.8 &  508.7 & 488.7 & 4279.4 &  56.9 & 4431.8 \\
(0.6,0.3) & 1852.3 & 0.0 &   1.3 & 7659.1 & 0.0 & 1223.7 & 3326.0 &  549.7 & 487.1 & 4245.3 &  58.4 & 4428.8 \\
(0.8,0.3) & 1832.3 & 0.0 &   2.4 & 7671.8 & 0.0 & 1240.3 & 3320.0 &  560.3 & 490.3 & 4248.6 &  58.9 & 4431.4 \\
(0.9,0.3) & 1836.8 & 0.0 &   3.0 & 7675.5 & 0.0 & 1235.6 & 3316.2 &  551.9 & 489.8 & 4248.9 &  54.9 & 4434.9 \\
\bottomrule
\end{tabular}
\end{subtable}

\end{table}

\end{appendices}

\end{document}